\numberwithin{equation}{section}
\theoremstyle{plain}
\newtheorem{thm}{Theorem}[section]
\newtheorem{prop}[thm]{Proposition}
\newtheorem{lem}[thm]{Lemma}
\newtheorem{cor}[thm]{Corollary}
\theoremstyle{definition}
\newtheorem{defn}[thm]{Definition}
\newtheorem{ex}[thm]{Example}
\theoremstyle{remark}
\newtheorem{remark}[thm]{Remark}
\newtheorem*{acknowledgements}{Acknowledgements}
\newcommand{\thmref}[1]{Theorem~\ref{#1}}
\newcommand{\propref}[1]{Proposition~\ref{#1}}
\newcommand{\secref}[1]{Section~\ref{#1}}
\newcommand{\lemref}[1]{Lemma~\ref{#1}}
\newcommand{\corref}[1]{Corollary~\ref{#1}}
\newcommand{\figref}[1]{Figure~\ref{#1}}
\newcommand{\exref}[1]{Example~\ref{#1}}
\newcommand{\eqnref}[1]{Equation~\eqref{#1}}
\newcommand{\defref}[1]{Definition~\ref{#1}}
\newcommand\Z{\ensuremath{\mathbb{Z}}}
\newcommand\R{\ensuremath{\mathbb{R}}}
\newcommand\C{\ensuremath{\mathbb{C}}}
\def\H{\ensuremath{\mathbb{H}}} 
\newcommand{\calA}{{\mathcal A}}
\newcommand{\calB}{{\mathcal B}}
\newcommand{\calC}{{\mathcal C}}
\newcommand{\calE}{{\mathcal E}}
\newcommand{\calI}{{\mathcal I}}
\newcommand{\calO}{{\mathcal O}}
\newcommand{\calP}{{\mathcal P}}
\newcommand{\calS}{{\mathcal S}}
\newcommand{\calT}{{\mathcal T}}
\DeclareMathOperator{\sysEL}{sysEL}
\DeclareMathOperator{\sys}{sys}
\DeclareMathOperator{\area}{area}
\DeclareMathOperator{\length}{length}
\newcommand{\del}{\partial}
\newcommand{\eps}{\varepsilon}
\DeclareMathOperator{\re}{Re}
\DeclareMathOperator{\im}{Im}
\DeclareMathOperator{\Res}{Res}
\DeclareMathOperator{\PSL}{PSL}
\DeclareMathOperator{\EL}{EL}
\DeclareMathOperator{\ELsys}{sys_{\EL}}
\DeclareMathOperator{\CAT}{CAT}
\DeclareMathOperator{\conf}{Conf}
\DeclareMathOperator{\SR}{SR}
\newcommand{\CHAT}{\widehat{\C}}
\begin{document}
\title{The extremal length systole of the Bolza surface}

\author[M. Fortier Bourque]{Maxime Fortier Bourque}
\address{
School of Mathematics and Statistics, University of Glasgow, University Place, Glasgow, United Kingdom, G12 8QQ}
\email{maxime.fortier-bourque@glasgow.ac.uk}

\author[D. Martínez-Granado]{Dídac Martínez-Granado}
\address{Department of Mathematics\\
         University of California Davis,
         Davis, California, 95616\\
         USA}
\email{dmartinezgranado@ucdavis.edu}

    \author[F. Vargas Pallete]{Franco Vargas Pallete}
\address{
Department of Mathematics\\
         Yale University\\
New Haven, CT 08540\\
USA}
\email{franco.vargaspallete@yale.edu}

\date{Draft of \today}
\begin{abstract}
We prove that the extremal length systole of genus two surfaces attains a strict local maximum at the Bolza surface, where it takes the value $\sqrt{2}$.
\end{abstract}

\maketitle

\section{Introduction}

Extremal length is a conformal invariant that plays an important role in complex ana\-ly\-sis,  complex dynamics, and Teichm\"uller theory \cite{AhlforsQuasi,AhlforsConf,JenkinsBook}. It can be used to define the notion of quasiconformality, upon which the Teichm\"uller distance between Riemann surfaces is based. In turn, a formula of Kerckhoff \cite[Theorem 4]{Kerckhoff} shows that Teichm\"uller distance is determined by extremal lengths of (homotopy classes of) essential simple closed curves, as opposed to all families of curves.

The \emph{extremal length systole} of a Riemann surface $X$ is defined as the infimum of the extremal lengths of all essential closed curves in $X$. This function fits in the framework of \emph{generalized systoles} (infima of collections of ``length'' functions) developed by Bavard in \cite{Bav:97} and \cite{Bav:05}. In contrast with the hyperbolic systole, the extremal length systole has not been studied much so far. 

For flat tori, we will see that the extremal length systole agrees with the systolic ratio, from which it follows that the regular hexagonal torus uniquely maximizes the extremal length systole in genus one (c.f. Loewner's torus inequality \cite{Pu}). 

In \cite{MGVP}, the last two authors of the present paper conjectured that the Bolza surface maximizes the extremal length systole in genus two. This surface, which can be obtained as a double branched cover of the regular octahedron branched over the vertices, is the most natural candidate since it maximizes several other invariants in genus two such as the hyperbolic systole \cite{Jenni}, the kissing number \cite{Schmutz:kiss}, and the number of automorphisms \cite[Section 3.2]{KarcherWeber}. The maximizer of the systolic ratio among all non-positively curved surfaces of genus two is also in the conformal class of the Bolza surface \cite{KatzSabourau} and the same is true for the maximizer of the first positive eigenvalue of the Laplacian among all Riemannian surfaces of genus two with a fixed area \cite{NatayaniShoda}. 

Here we make partial progress toward the aforementioned  conjecture, by showing that the Bolza surface is a strict local maximizer for the extremal length systole.

\begin{thm} \label{thm:main}
The extremal length systole of Riemann surfaces of genus two attains a strict local maximum at the Bolza surface, where it takes the value $\sqrt{2}$.
\end{thm}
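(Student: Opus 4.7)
The plan is to apply the variational framework for generalized systoles due to Bavard \cite{Bav:97}. The extremal length systole on $\Teich(S_2)$ is the pointwise infimum of the $C^1$ functions $X \mapsto \EL_X(\alpha)$ indexed by homotopy classes of essential simple closed curves $\alpha$. Bavard's criterion says that such an infimum attains a strict local maximum at $X_0$ provided the set $I_0$ of active curves at $X_0$ is finite and $0$ lies in the interior of the convex hull of the differentials $\{d\EL_{X_0}(\alpha)\}_{\alpha \in I_0}$ inside the cotangent space $Q(X_0) = T^*_{X_0}\Teich(S_2)$, the space of holomorphic quadratic differentials (of complex dimension $3$).

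My first goal is to pin down $I_0$ at the Bolza surface $X_0$ and simultaneously verify the value $\sqrt{2}$. Using the representation of $X_0$ as a branched double cover of the regular octahedron, each of the $12$ hyperbolic systoles $\alpha$ should arise as the core curve of a one-cylinder Jenkins--Strebel decomposition of $X_0$; this yields $\EL_{X_0}(\alpha) = \sqrt{2}$ by a direct calculation of the height-to-circumference ratio of the cylinder. Showing that no other curve class reaches the value $\sqrt 2$ is more delicate and requires: a hyperbolic-to-extremal-length comparison to restrict attention to finitely many short classes, a classification of these classes up to the $\Aut(X_0)$-action, and an explicit lower bound on the extremal length of each non-systolic candidate (e.g.\ by exhibiting an admissible conformal metric realizing a sufficiently large modulus).

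Once $I_0$ is identified with the $\Aut(X_0)$-orbit of hyperbolic systoles, the Gardiner--Masur formula identifies each differential $d\EL_{X_0}(\alpha)$ with a positive multiple of the Jenkins--Strebel differential $q_\alpha \in Q(X_0)$ computed above. Transitivity of the order-$48$ group $\Aut(X_0)$ on $I_0$ implies that it permutes the $q_\alpha$, so the average $\bar q = \frac{1}{12}\sum_{\alpha \in I_0} q_\alpha$ lies in the $\Aut(X_0)$-invariant subspace of $Q(X_0)$. A structural fact about the Bolza surface — reflecting its rigidity among surfaces with its symmetry group, equivalent to the vanishing of the $\Aut(X_0)$-equivariant deformation space — is that this invariant subspace is trivial, so $\bar q = 0$ and $0$ lies in the convex hull of the $q_\alpha$. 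Interiority then follows from the further observation that the $\Aut(X_0)$-orbit of any one $q_\alpha$ spans $Q(X_0)$ over $\R$, which is automatic since there is no nonzero invariant complement to the span.

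The main obstacle I expect is the identification of the active set. The eutaxy argument, and hence strict local maximality, is essentially forced by the symmetry of $X_0$ once $I_0$ is in hand; by contrast, bounding the extremal length of all non-systolic classes away from $\sqrt{2}$ requires controlling infinitely many isotopy classes. I would approach this through standard reductions (modulus estimates or hyperbolic comparisons to reduce to curves of bounded hyperbolic length, then a finite case analysis exploiting $\Aut(X_0)$-symmetry). Because $\sqrt{2}$ is not an exceptionally small value, it is conceivable that several short non-systolic classes will have to be ruled out individually, using either explicit annular decompositions of $X_0$ or comparison with the flat structure associated to the Jenkins--Strebel differentials already constructed.
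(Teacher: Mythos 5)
Your overall framework is the same as the paper's: Bavard's criterion (perfection plus eutaxy), identification of the active set of curves, and Gardiner's formula for the differentials of extremal length. The eutaxy part of your argument is essentially correct and matches the paper's use of the fact that the Bolza surface is an isolated fixed point of its automorphism group in Teichm\"uller space, which forces the average $\bar q$ to vanish. However, two genuine gaps remain.

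First, your perfection argument does not go through. You claim that the $\Aut(X_0)$-orbit of $q_\alpha$ spans $Q(X_0)$ over $\R$ ``since there is no nonzero invariant complement to the span.'' This conflates ``no invariant vector'' with ``no invariant subspace.'' The real span $V$ of the orbit is a $G$-invariant subspace of $Q(X_0)$, and if $V$ is proper, Maschke's theorem produces a nonzero $G$-invariant complement $W$; but the absence of $G$-fixed \emph{vectors} in $Q(X_0)$ says nothing about whether $W=0$. To make a representation-theoretic argument close, you would need to show that $Q(X_0)$ is $\R$-irreducible as a representation of $\Aut(X_0)$ (which requires computing a Schur indicator), or else fall back on the explicit computation: the paper writes out the twelve quadratic differentials $q_\beta$, extracts their residues at the punctures (on the octahedron side), and verifies that the resulting $12\times 6$ matrix of directional derivatives has full rank. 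Without one of these, perfection is not established.

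Second, identifying the active set and proving the value $\sqrt{2}$ is much more delicate than ``a direct calculation of the height-to-circumference ratio,'' and your sketch of how to rule out other curves would need to be carried out. The paper handles both by passing to the hyperelliptic quotient: the edge curves on the punctured octahedron have extremal length $2\sqrt{2}$ (computed via Schwarz--Christoffel maps to rectangular pillowcases, elliptic integrals, and the Landen transformations), and being $(2,4)$-curves their lifts have half that, namely $\sqrt{2}$. The hierarchy is then established not by hyperbolic comparisons but by the flat metric on the regular octahedron, which bounds the length of every other class below by $2\sqrt{3}$, and by the observation that separating curves on the Bolza surface correspond to $(3,3)$-curves and get their extremal length \emph{doubled}. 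This last point is an asymmetry you have not accounted for: the face curves on the octahedron have the smallest extremal length there, but their lifts are separating on $\calB$ and end up much longer than $\sqrt{2}$, so the minimizers on $\calB$ correspond to the \emph{second}-shortest curves downstairs. Your plan would have to reproduce this bookkeeping to identify $I_0$ correctly.
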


Once the curves with minimal extremal length have been identified, the proof that the Bolza surface is a strict local maximizer boils down to a calculation. Indeed, there is a sufficient criterion for generalized systoles to attain a strict local maximum at a point in terms of the derivatives of the lengths of the shortest curves \cite[Proposition~2.1]{Bav:97}, generalizing Voronoi's characterization of extreme lattices in terms of eutaxy and perfection.

The crux of the proof is thus to identify the curves with minimal extremal length. This is not a trivial task because extremal length is hard to compute exactly in general, although it is fairly easy to estimate. In this particular case, we are able to calculate the extremal length of certain curves by finding branched coverings from the Bolza surface to rectangular pillowcases, where extremal length is expressed in terms of elliptic integrals. We then show that all other curves are longer by using a piecewise Euclidean metric to estimate their extremal length. The last step is to compute the first derivative of the extremal length of each shortest curve as we deform the complex structure. These derivatives are encoded by the associated quadratic differentials thanks to Gardiner's formula \cite{G84:MinimalNormProperty}.

The value of $\sqrt{2}$ for the extremal length systole of the Bolza surface came as a surprise to us. Indeed, we initially expressed it as the ratio of elliptic integrals
\[
\left. \int_0^1 \frac{x+1+\sqrt{2}}{ \sqrt{x(1-x^4)}} \, dx  \middle/ \int_1^\infty \frac{x+1+\sqrt{2}}{\sqrt{x(x^4-1)} } \, dx \right.
\]
and numerical calculations suggested that it coincided with the square root of two, which we proved. We later found that this follows from a pair of identities between elliptic integrals called the \emph{Landen transformations}. The flip side is that we appear to have discovered a new geometric proof of these identities.

The other surprising phenomenon is that the curves with minimal extremal length on the Bolza surface correspond to the second shortest curves on the punctured octahedron rather than the first. What is going on is that extremal length is not preserved under double branched covers; it is either multiplied or divided by two depending on the type of curve. While there are twelve shortest curves on the Bolza surface, there are only four on the punctured octahedron. In particular, the punctured octahedron is not perfect. Thus, either the punctured octahedron is not a local maximizer or Voronoi's criterion fails for the extremal length systole. We think the second option is more likely.

To conclude this introduction, we note that the proof that the Bolza surface maximizes the hyperbolic systole in genus two \cite{Jenni} (see also \cite{Bav:hyper}) rests on two ingredients: the fact that every genus two surface is hyperelliptic, and a bound of B\"or\"oczky on the density of sphere packings in the hyperbolic plane. A similar approach is used in \cite{KatzSabourau} to determine the optimal systolic ratio among locally $\CAT(0)$ metrics. While we use the first ingredient in the proof of \thmref{thm:main}, the second ingredient is not available because the extremal length systole is calculated using a different metric for each closed curve. Schmutz's proof that the Bolza surface is the unique local maximizer for the hyperbolic systole \cite[Theorem 5.3]{Schmutz:localmax} is similarly very geometric and does not seem applicable for extremal length. New ideas would therefore be required to remove the word ``local'' from the statement of \thmref{thm:main}.

\subsection*{Organization}
We define extremal length and illustrate how to compute it using Beurling's criterion in \secref{sec:EL}. In \secref{sec:simple}, we show that with the exception of the thrice-punctured sphere, the extremal length systole is only achieved by simple closed curves. \secref{sec:branched} explains how extremal length behaves under branched coverings. In \secref{sec:explicit}, we use elliptic integrals to compute the extremal length of various curves on the punctured octahedron. We then prove lower bounds on the extremal length of all other curves in \secref{sec:estimates} to determine the extremal length systole of the punctured octahedron and the Bolza surface. We prove that the Bolza surface is a strict local maximizer in \secref{sec:derivatives}. Our geometric proof of the Landen transformations is given in Appendix \ref{sec:Landen}, and Appendix \ref{sec:prisms} contains upper bounds for the extremal length systole of six-times-punctured prisms and antiprisms.

\begin{acknowledgements}
The authors would like to thank Misha Kapovich, Chris Leininger, and Dylan Thurston for useful discussions related to this work. Franco Vargas Pallete's research was supported by NSF grant DMS-2001997. Part of this material is also based upon work supported by the National Science Foundation under Grant No. DMS-1928930 while Franco Vargas Pallete participated in a program hosted by the Mathematical Sciences Research Institute in Berkeley, California, during the Fall 2020 semester.
\end{acknowledgements}
\section{Extremal length} \label{sec:EL}

\subsection{Extremal length} A \emph{conformal metric} on a Riemann surface $X$ is a Borel-measurable map $\rho : TX \to \R_{\geq 0}$ such that $\rho(\lambda v)=|\lambda| \rho(v)$ for every $v \in TX$ and every $\lambda \in \C$. This gives a choice of scale at every point in $X$, with respect to which we can measure length or area. We denote the set of conformal metrics of finite positive area on $X$ by $\conf(X)$.

Given a conformal metric $\rho$ and a map $\gamma$ from a $1$-manifold $I$ to $X$, we define 
\[
\length_\rho(\gamma) := \int_\gamma \rho = \int_I \rho(\gamma'(t)) \, dt
\] if $\gamma$ is locally rectifiable and $\length_\rho(\gamma)=\infty$ otherwise. If $\Gamma$ is a set of maps from $1$-manifolds to $X$, then we set $\ell_\rho(\Gamma) := \inf\{\length_\rho(\gamma) : \gamma \in \Gamma  \}$. Finally, the \emph{extremal length} of $\Gamma$ is
\[
\EL(\Gamma) := \EL(\Gamma,X) := \sup_{\rho \in \conf(X)} \frac{\ell_\rho(\Gamma)^2}{\area_\rho(X)}.
\]

This powerful conformal invariant was introduced by Ahlfors and Beurling in \cite{AhlforsBeurling}. The standard reference on this topic is \cite[Chapter 4]{AhlforsConf}.

Typically, one takes $\Gamma$ to be the homotopy class $[\gamma]$ of a map $\gamma$ from a $1$-manifold to $X$. In this case, we will often abuse notation and write $\EL(\gamma)$ or $\EL(\gamma, X)$ instead of $\EL([\gamma])$ or $\EL([\gamma],X)$. Similarly, we may write $\ell_\rho(\gamma)$ instead of $\ell_\rho([\gamma])$.

\subsection{The extremal length systole}

A \emph{closed curve} in a Riemann surface $X$ is the continuous image of a circle. It is \emph{simple} if it is embedded, and \emph{essential} if it cannot be homotoped to a point or into an arbitrarily small neighborhood of a puncture. The sets of homotopy classes of essential closed curves and of essential simple closed curves in $X$ will be denoted by $\calC(X)$ and $\calS(X)$ respectively.

\begin{defn} \label{def:ELsys}
The \emph{extremal length systole} of a Riemann surface $X$ is defined as
\[
\ELsys(X) :=\inf_{c \in \calC(X)} \EL(c,X).
\]
\end{defn}

The reason for restricting to \emph{essential} closed curves is that the extremal length of any inessential closed curve is equal to zero. We will see in \secref{sec:simple} that if $X$ is different from the thrice-punctured sphere, then we can replace $\calC(X)$ by $\calS(X)$ in \defref{def:ELsys} without affecting the resulting value.

\subsection{Beurling's criterion}
For there to be any hope of computing the extremal length systole of a Riemann surface, we should first be able to compute the extremal length of some essential closed curves. 
The definition of extremal length makes it easy to find lower bounds for it: any conformal metric of finite positive area provides a lower bound. To determine its exact value is harder; all known examples use the following criterion \cite[Theorem 4-4]{AhlforsConf} (c.f. \cite[Section 5.5]{Gromov} or \cite{Bav:92}), which encapsulates the \emph{length-area method}.

\begin{thm}[Beurling's criterion]
Let $\Gamma$ be a set of maps from $1$-manifolds to a Riemann surface $X$. Suppose that $\rho \in \conf(X)$ is such that there is a nonempty subset $\Gamma_0 \subseteq \Gamma$ of shortest curves, meaning that $\length_{\rho}(\gamma) = \ell_{\rho}(\Gamma)$ for every $\gamma \in \Gamma_0$, and that the implication 
\begin{equation} \label{eq:fubini}
\int_{\gamma} f \rho  \geq 0\text{ for all }\gamma \in \Gamma_0 \Longrightarrow \int_X f \rho^2 \geq 0
\end{equation}
holds for every measurable function $f$ on $X$. Then $\EL(\Gamma)= \ell_{\rho}(\Gamma)^2/\area_{\rho}(X)$.
\end{thm}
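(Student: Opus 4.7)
The plan is to execute the classical length--area method. One direction, $\EL(\Gamma) \geq \ell_\rho(\Gamma)^2/\area_\rho(X)$, is immediate: $\rho$ itself is an admissible competitor in the supremum defining $\EL(\Gamma)$. For the reverse direction, let $\sigma \in \conf(X)$ be an arbitrary conformal metric; the goal is to show $\ell_\sigma(\Gamma)^2/\area_\sigma(X) \leq \ell_\rho(\Gamma)^2/\area_\rho(X)$. The strategy is to feed the hypothesis a carefully chosen test function and convert the resulting integral inequality into the desired ratio comparison via Cauchy--Schwarz.

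First I would set $\lambda := \ell_\sigma(\Gamma)/\ell_\rho(\Gamma)$ (the degenerate cases $\ell_\sigma(\Gamma) = 0$ or $\ell_\rho(\Gamma) = 0$ being handled separately) and take as test function
\[
f := \sigma/\rho - \lambda
\]
on $\{\rho > 0\}$, extended arbitrarily elsewhere. For each $\gamma \in \Gamma_0$ one computes
\[
\int_\gamma f\rho \,=\, \length_\sigma(\gamma) - \lambda \, \length_\rho(\gamma) \,=\, \length_\sigma(\gamma) - \ell_\sigma(\Gamma) \,\geq\, 0,
\]
using $\length_\rho(\gamma) = \ell_\rho(\Gamma)$ from the defining property of $\Gamma_0$ and $\length_\sigma(\gamma) \geq \ell_\sigma(\Gamma)$ from $\gamma \in \Gamma_0 \subseteq \Gamma$. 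The hypothesis then delivers $\int_X f\rho^2 \geq 0$, which rearranges to $\int_X \sigma\rho \geq \lambda \area_\rho(X)$. A single application of Cauchy--Schwarz bounds the same integral above by $\sqrt{\area_\sigma(X) \cdot \area_\rho(X)}$; combining the two estimates and squaring yields $\lambda^2 \area_\rho(X) \leq \area_\sigma(X)$, which upon substituting the definition of $\lambda$ is exactly the desired inequality. Taking the supremum over $\sigma$ closes the argument.

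The main care required is measure-theoretic rather than conceptual. One must verify that $f$ is genuinely Borel-measurable (it is, as a ratio of two Borel conformal metrics on the open set $\{\rho > 0\}$); that $\sigma\rho$ and $\rho^2$ are integrable so Cauchy--Schwarz applies (guaranteed by the finite-area assumption built into $\conf(X)$); and that the length identity $\int_\gamma f\rho = \length_\sigma(\gamma) - \lambda \length_\rho(\gamma)$ holds, which strictly speaking requires $\rho$ to be positive along the curves in $\Gamma_0$ (otherwise one replaces $\rho$ by $\rho + \varepsilon$ and passes to the limit). None of these is a serious obstacle; the essential cleverness lies in the statement itself, where the given implication is precisely the first-order variational condition that makes the Cauchy--Schwarz step sharp.
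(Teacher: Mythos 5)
The paper does not prove this theorem; it cites it to Ahlfors \cite[Theorem 4-4]{AhlforsConf}, so there is no argument in the paper to compare against. Your proof is the standard length--area argument that underlies that citation, and it is essentially correct: the choice $f = \sigma/\rho - \lambda$ with $\lambda = \ell_\sigma(\Gamma)/\ell_\rho(\Gamma)$, the verification that $\int_\gamma f\rho \geq 0$ on $\Gamma_0$, the application of the hypothesis, and the Cauchy--Schwarz finish are all correct and in the right order. It is worth noting that the degenerate case $\ell_\rho(\Gamma)=0$ can be dispatched cleanly: plugging $f \equiv -1$ into \eqref{eq:fubini} would force $\area_\rho(X) \leq 0$, contradicting $\rho\in\conf(X)$, so the hypothesis already rules out $\ell_\rho(\Gamma)=0$.

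One small caution on the measure-theoretic point you raise at the end. If $\rho$ vanishes on a positive-length portion of some $\gamma\in\Gamma_0$ where $\sigma$ does not, then $\int_\gamma f\rho = \int_{\gamma\cap\{\rho>0\}}\sigma - \ell_\sigma(\Gamma)$ can genuinely drop below $0$, and your proposed fix of replacing $\rho$ by $\rho+\varepsilon$ does not repair this: implication \eqref{eq:fubini} is stated for $\rho$ only, and there is no reason the analogous implication should hold for $\rho+\varepsilon$ (nor that $\Gamma_0$ remains the set of $(\rho+\varepsilon)$-shortest curves). The honest observation is that the statement as written implicitly requires $\rho>0$ a.e.\ along curves in $\Gamma_0$ (equivalently, that the hypothesis be phrased for a function $h$ playing the role of $f\rho$, as Ahlfors does, rather than for $f$). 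This hypothesis holds in every application in the paper, where $\rho$ is a flat or half-translation metric vanishing only at finitely many points, so the gap is harmless in context, but the stated patch should be replaced by that observation.
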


In practice, one often finds a measure $\mu$ on the set $\Gamma_0$ such that
\begin{equation} \label{eq:fubini2}
\int_X f \rho^2 = k   \int_{\Gamma_0}  \left( \int_{\gamma} f \rho \right) d\mu(\gamma)
\end{equation}
for some constant $k>0$, which obviously implies \eqref{eq:fubini}. In this case, we will say that $\Gamma_0$ \emph{sweeps out $X$ evenly}.

A conformal metric $\rho$ such that $\EL(\Gamma)= \ell_{\rho}(\Gamma)^2/\area_{\rho}(X)$ as in Beurling's criterion is said to be \emph{extremal} for $\Gamma$. Extremal metrics always exist in a weak sense \cite[Theorem 12]{Rodin} (c.f. \cite[Theorem 5.6.C']{Gromov}), though we will not use this. By a convexity argument, if an extremal metric exists, then it is unique (in the sense of equality almost everywhere) up to scaling \cite[Theorem 2.2]{JenkinsBook}.

\subsection{Examples}

Here are some examples of extremal metrics.

\begin{ex} \label{ex:simple}
If $X$ is a Riemann surface with a finitely generated fundamental group and $c$ is the homotopy class of an essential simple closed curve in $X$, then the extremal metric for $c$ is equal to $\sqrt{|q|}$ for an integrable holomorphic quadratic differential $q$ all of whose regular horizontal trajectories belong to $c$, and this quadratic differential is unique up to scaling \cite{Jenkins}. In simpler terms, the extremal metric looks like a Euclidean cylinder with some parts of its boundary glued together via isometries. If the metric $\rho = \sqrt{|q|}$ is scaled so that the height of the cylinder is $1$, then $\ell_{\rho}(c)=\area_{\rho}(X)$ so that the extremal length of $c$ is equal to either of these two quantities (the circumference or the area of the cylinder).
\end{ex}

We refer the reader to \cite{Strebel} for background on quadratic differentials. Given the existence of $q$, the fact that $\sqrt{|q|}$ is extremal follows from Beurling's criterion since the horizontal trajectories of $q$ have minimal length in their homotopy class and integration against $|q|$ is the same as iterated integration along the horizontal trajectories, then against the transverse measure. 

The above result is true more generally if $c$ is the homotopy class of a simple multi-curve with weights \cite{Renelt}. The Heights Theorem of Hubbard and Masur further generalizes the existence and uniqueness of $q$ to equivalence classes of measured foliations \cite{HM79:QuadFoliations} (see also \cite{MS84:Heights}) and this can be used to define the extremal length of such things.

For closed curves that are not simple, very little is known about the extremal metric. The investigations in \cite{Calabi,HZ18:SwissCross,NZ19:Isosystolic} suggest that it might have positive curvature in general. Here are three examples of non-simple closed curves on the thrice-punctured sphere for which the extremal metric is flat but does not come from a quadratic differential.

\begin{figure}[htp]
     \centering
     \begin{subfigure}[b]{0.3\textwidth}
         \centering
         \includegraphics[scale=.75]{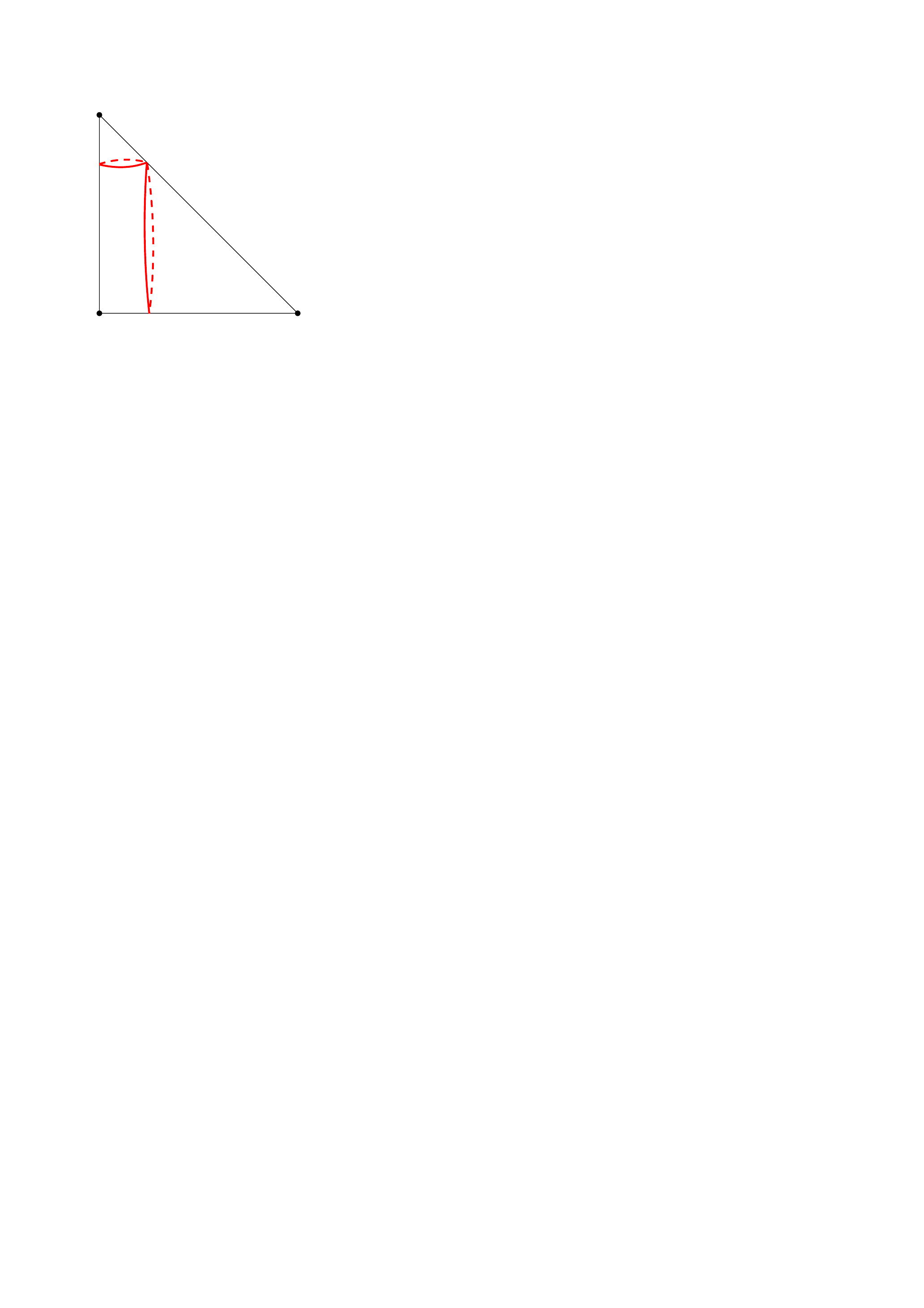}
         \caption{A figure-eight}
         \label{fig:figure-eight}
     \end{subfigure}
     \hfill
     \begin{subfigure}[b]{0.3\textwidth}
         \centering
         \includegraphics[scale=.75]{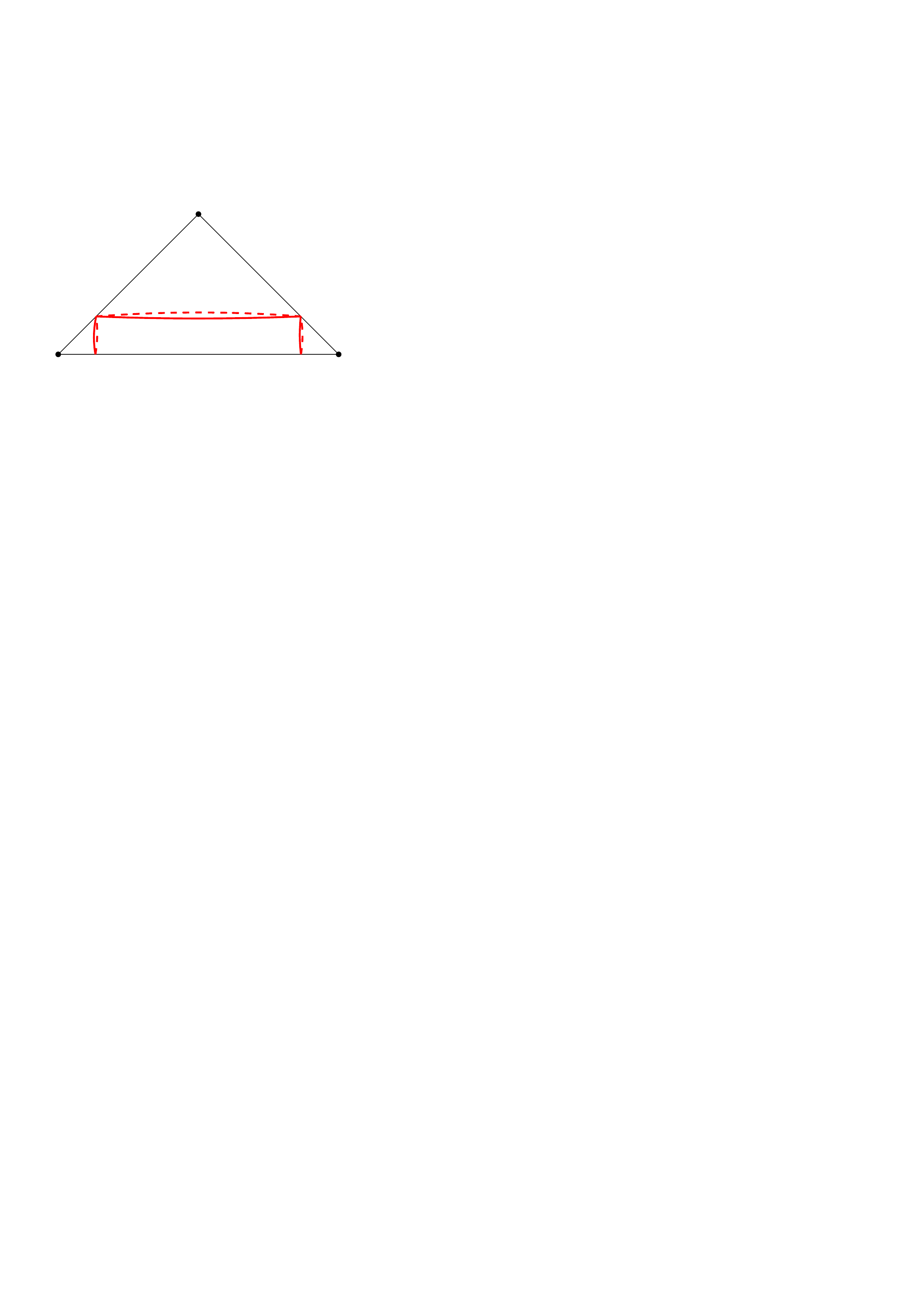}
         \caption{A curve with two self-intersections}
         \label{fig:sausage}
     \end{subfigure}
     \hfill
     \begin{subfigure}[b]{0.3\textwidth}
         \centering
         \includegraphics[scale=.75]{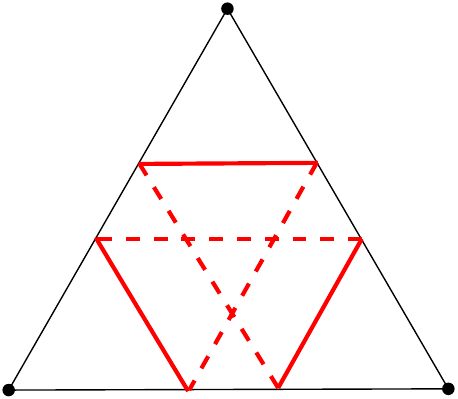}
         \caption{A curve with three self-intersections}
         \label{fig:trefoil}
     \end{subfigure}
     \caption{Some extremal metrics on the thrice-punctured sphere}
     \label{fig:double_triangles}
\end{figure}

\begin{ex} \label{ex:figure-eight}
Take two copies of a Euclidean isoceles right triangle, glue them along their boundary, and puncture the resulting surface at the three vertices. Then the resulting metric $\rho$ is extremal for the figure-eight curve $\gamma$ which winds once around each of the two acute vertices (see \figref{fig:figure-eight}). If the short sides of the triangle have length $1$, then the surface has area $1$ and $\ell_\rho(\gamma)=2$ so that $\EL(\gamma)=4$.
\end{ex}

\begin{ex} \label{ex:sausage}
The metric from \exref{ex:figure-eight} is also extremal for the curve $\gamma$ with two self-intersections depicted in \figref{fig:sausage}. This curve satisfies $\ell_\rho(\gamma)=2\sqrt{2}$ so that $\EL(\gamma)=8$.
\end{ex}

The next example is closely related to \cite[Example 4-2]{AhlforsConf}.

\begin{ex}  \label{ex:isoceles}
Take two copies of a Euclidean equilateral triangle, glue them along their boundary, and puncture the resulting surface at the three vertices. Then the resulting metric $\rho$ is extremal for the curve $\gamma$ depicted in \figref{fig:trefoil}. If the side length of the triangle is equal to $1$, then the area of the surface is equal to $\sqrt{3}/2$ and $\ell_\rho(\gamma)=3$, so that $\EL(\gamma)= 6 \sqrt{3}$.
\end{ex}

In each case, the proof is an application of Beurling's criterion. The first observation is that any closed geodesic in a locally $\CAT(0)$ space (which these surfaces are) minimizes length in its homotopy class. Thus, the closed geodesics depicted in \figref{fig:double_triangles} have minimal length in their homotopy class. Furthermore, in each case the set $\Gamma_0$ of closed geodesics homotopic to $\gamma$ sweeps out the surface evenly. 

To prove this, we use the fact that in each example there is an isometric immersion $\pi$ from an open Euclidean cylinder $C$ to the surface $X$ that sends closed geodesics in $C$ to closed geodesics in the desired homotopy class on $X$. These immersions can be obtained by folding along the dashed lines in \figref{fig:immersions}. We observe that the immersion is $2$-to-$1$ almost every\-where in the first two cases and $3$-to-$1$ almost every\-where in the third case (only the edges of the double triangle get covered fewer times). Therefore, if $f$ is a measurable function on $X$, then
\[
\deg(\pi) \int_X f \rho^2  =  \int_C (f \circ \pi) \,  (\pi^*\rho)^2 =  \int_y \left( \int_{\alpha_y} (f \circ \pi) \, (\pi^*\rho) \right) dy 
 = \int_y \left( \int_{\pi(\alpha_y)} f \rho \right) dy
\]
where $\alpha_y$ is the closed geodesic at height $y$ in $C$, which shows that \eqnref{eq:fubini2} holds.

\begin{figure}[htp]
     \centering
     \begin{subfigure}[b]{0.45\textwidth}
         \centering
         \includegraphics[scale=0.5]{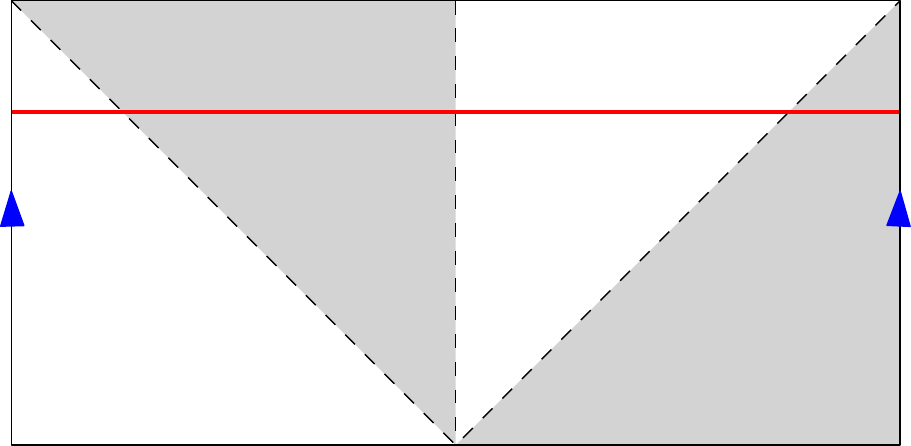}
         \caption{A $2$-to-$1$ immersion}
         \label{fig:figure-eight_cyl}
     \end{subfigure}
     \begin{subfigure}[b]{0.45\textwidth}
         \centering
         \includegraphics[scale=0.473]{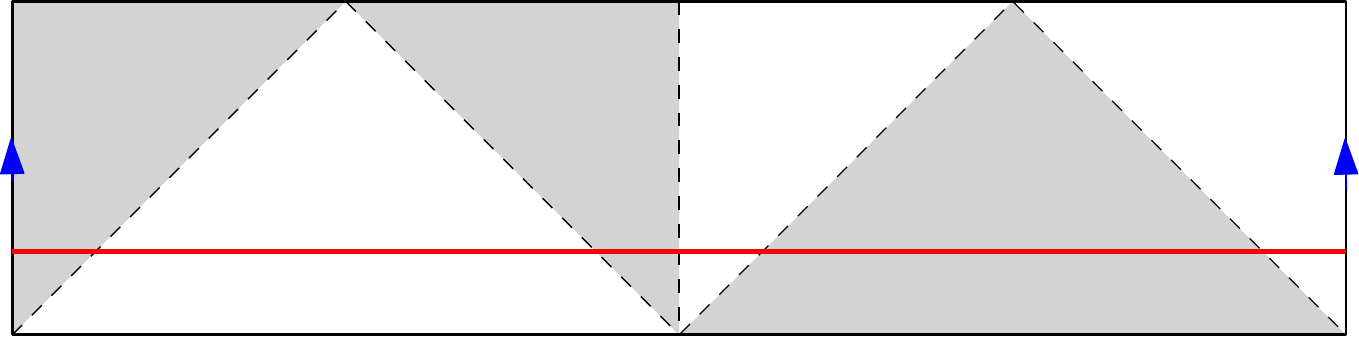}
         \caption{Another $2$-to-$1$ immersion}
         \label{fig:sausage-cyl}
     \end{subfigure}
     \par\bigskip
     \begin{subfigure}[b]{0.6\textwidth}
         \centering
         \includegraphics[scale=0.5]{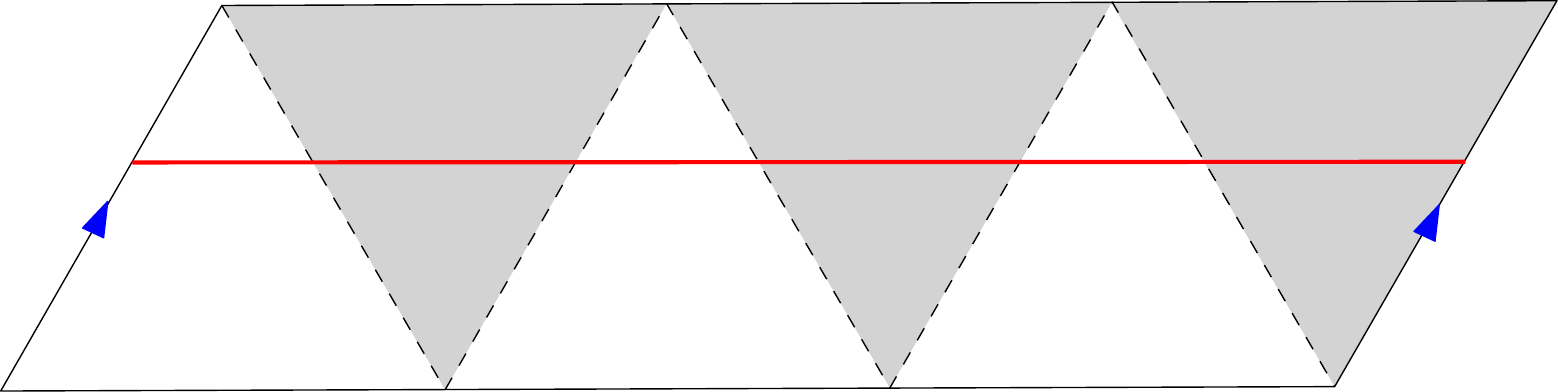}
         \caption{A $3$-to-$1$ immersion}
         \label{fig:trefoil-cyl}
     \end{subfigure}
     \caption{Isometric immersions from cylinders to double triangles}
     \label{fig:immersions}
\end{figure}

\subsection{Pulling curves tight}

In order to apply Beurling's criterion, or even to obtain a lower bound for extremal length given a metric $\rho$, the first step is to determine the infimum $\ell_\rho(\Gamma)$. A useful trick for this is to pull curves tight. This is a straightforward procedure if the surface is compact, but it leads to slight complications in the presence of punctures.

\begin{prop} \label{prop:tight}
Let $\overline{X}$ be a closed Riemann surface equipped with a conformal metric $\rho$ whose induced distance is compatible with the topology on $\overline{X}$, let $X = \overline{X} \setminus P$ where $P \subset \overline{X}$ is a finite set, and let $c$ be the homotopy class of an essential closed curve in $X$. Then there is a closed curve $\gamma$ in $\overline{X}$ such that $\gamma$ is the limit of a sequence $(\gamma_n)$ of curves in $c$, the restriction $\gamma \cap X$ is locally geodesic, and $\length_\rho(\gamma)= \ell_\rho(c)$. If $c$ is the homotopy class of an essential simple closed curve, then the approximating curves $\gamma_n$ can be chosen to be simple. Furthermore, if $\rho$ is piecewise Euclidean with finitely many cone points, then $\gamma$ can be chosen to pass through either a cone point in $X$ or a point in $P$, unless $X$ is a flat torus.
\end{prop}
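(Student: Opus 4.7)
The plan is to run a standard Arzelà--Ascoli argument in the compact metric space $(\overline{X}, d_\rho)$, then to establish that the limit is length-minimizing by performing small surgeries around any punctures it happens to touch, and finally to handle the piecewise Euclidean assertion by translating $\gamma$ inside a flat cylinder until it meets the cone locus.

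First I would set up the compactness. Since $d_\rho$ induces the topology on the compact surface $\overline{X}$, the metric space $(\overline{X}, d_\rho)$ is compact. Let $(\gamma_n)$ be a minimizing sequence in $c$, with each $\gamma_n$ reparametrized at constant $\rho$-speed $L_n := \length_\rho(\gamma_n)$; if $c$ is a simple class, the $\gamma_n$ can be taken simple. Since $L_n \to \ell_\rho(c) < \infty$, the $\gamma_n$ are uniformly $d_\rho$-Lipschitz on $S^1$, so after extraction they converge uniformly to a limit $\gamma : S^1 \to \overline{X}$. Lower semi-continuity of $\rho$-length under uniform convergence gives $\length_\rho(\gamma) \leq \liminf \length_\rho(\gamma_n) = \ell_\rho(c)$.

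Second, I would prove the reverse inequality and the homotopy statement by perturbing $\gamma$ off $P$. If $\gamma$ avoids $P$, then for large $n$ the curve $\gamma_n$ lies in an embedded tubular neighborhood of $\gamma$ inside $X$, so $\gamma_n$ is freely homotopic to $\gamma$ in $X$, whence $\gamma \in c$ and $\length_\rho(\gamma) \geq \ell_\rho(c)$. If $\gamma$ meets $P$, I would use the following consequence of finite $\rho$-area: for each $p \in P$ and each conformal coordinate $z$ centered at $p$, Cauchy--Schwarz in polar coordinates yields
\[
\int_0^\eps L(r)^2 \, \frac{dr}{r} \leq 2\pi \, \area_\rho(\{|z|<\eps\}),
\]
where $L(r)$ is the $\rho$-length of $\{|z|=r\}$. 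Since $\int_0^\eps dr/r = \infty$, this forces $\liminf_{r \to 0} L(r) = 0$. I would then replace each arc of $\gamma$ in a small neighborhood of $p$ by a detour of arbitrarily small extra length running once around $p$ on whichever side the $\gamma_n$ pass for large $n$. The resulting curves lie in $X$, are freely homotopic to $\gamma_n$, hence belong to $c$, and have lengths converging to $\length_\rho(\gamma)$; combined with the first step this gives $\length_\rho(\gamma) = \ell_\rho(c)$. The same surgery preserves simplicity if $\gamma_n$ is simple and the detour scales are chosen small enough.

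Third, I would deduce that each arc of $\gamma \cap X$ between consecutive visits to $P$ is locally geodesic: a local shortening, followed by the same puncture-avoiding surgery, would yield curves in $c$ strictly shorter than $\ell_\rho(c)$, contradicting what was just proved. For the piecewise Euclidean assertion, suppose $\gamma$ avoids every cone point of $X$ and every point of $P$; then $\gamma$ is a smooth closed geodesic in the flat locus and lies in a maximal embedded flat cylinder $A$ foliated by parallel closed geodesics of the same $\rho$-length. If $A = X$ then $X$ is a flat torus; otherwise $\partial A$ meets either a cone point or a puncture, and translating $\gamma$ across $A$ until it first reaches $\partial A$ produces a new length-minimizing representative of $c$ passing through such a point.

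The main obstacle is the second step: I need to choose the side of each detour around $P$ so that the perturbed curve is freely homotopic to $\gamma_n$ for large $n$, which is delicate when $\gamma$ revisits the same point of $P$ several times or when two visits occur at nearby parameter values. The uniform convergence $\gamma_n \to \gamma$ should dictate the correct winding on each side, but making this precise requires carefully choosing detour radii that shrink along with $n$ and that are compatible across all visits simultaneously.
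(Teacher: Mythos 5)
Your outline follows the same broad architecture as the paper's proof: Arzel\`a--Ascoli on a constant-speed minimizing sequence, lower semi-continuity of length, surgery around the punctures for the reverse inequality, and a parallel-translation argument in the piecewise Euclidean case. Two of your choices differ slightly from the paper and one is a genuine improvement: to argue that the detours around $P$ can be taken with arbitrarily small length, the paper simply asserts that the circumference of a small circle around $p$ tends to zero, whereas you justify this via the length--area inequality $\int_0^\eps L(r)^2\,dr/r \leq 2\pi\,\area_\rho(B_\eps)$, which is the honest argument (it only shows $\liminf_{r\to 0} L(r)=0$, so you must select good radii, but that is enough). For the flat case, the paper pushes $\gamma$ by the normal geodesic flow and, if the flow never terminates, produces a local isometry from the complete cylinder $S^1\times\R$ that must be a covering, hence $X$ is a torus; your maximal embedded flat cylinder $A$ is the image of exactly this flow, and since parallel geodesics at distinct signed distances are disjoint, $A$ really is embedded, so the two arguments amount to the same thing.

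There are two gaps worth flagging. First, the statement ``running once around $p$'' is too restrictive: in general the curve $\gamma$ may visit a point $p\in P$ several times, the approximants $\gamma_n$ may wind around $p$ more than a half-turn near a given visit, and the correct winding for each detour is not automatic. The paper resolves this by noting that a small neighborhood of $\gamma$ intersected with $X$ deformation retracts onto the union of the short arcs of $\gamma$ together with the small circles around the points of $P$ that $\gamma$ hits, and that within this retract there is a \emph{unique} choice of winding numbers making the reconstructed curve $\alpha_n$ lie in $c$. You acknowledge this as ``the main obstacle,'' so the gap is not hidden, but it is precisely the nontrivial step of the paper's argument and would need to be carried out. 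Second, you assert at the outset that when $c$ is a simple class ``the $\gamma_n$ can be taken simple,'' and later that ``the same surgery preserves simplicity,'' but neither is immediate: a minimizing sequence need not consist of simple curves, and the detours around several visits to the same puncture can create intersections. The paper handles the first point by perturbing $\gamma_n$ to have only transverse self-intersections and then resolving monogons and bigons (using that a non-simple representative of a simple class must bound one, by Hass--Scott), losing an arbitrarily small amount of length; it notes explicitly that one may then have to pass to a different subsequence and a different limit $\gamma$. These two steps should be added to make your proposal complete.
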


This is a well-know fact at least for metrics coming from quadratic differentials (see \cite[Chapter V]{Strebel}, \cite[Section 4]{Minsky} or \cite[Section 2.4]{DLR}), but we could not track down a proof in the case where $X$ has punctures. Here we will apply the lemma to polyhedra punctured at the vertices. It is worth pointing out that even when $c$ contains simple closed curves, the length-minimizer $\gamma$ need not be simple; it can have tangential self-intersections.

\begin{proof}
Take any sequence $(\gamma_n) \subset c$ of curves parametrized proportionally to arc length such that $\length_\rho(\gamma_n)$ tends to the infimum $\ell_\rho(c)$ as $n \to \infty$. Since these are uniformly Lipschitz maps from the circle into $\overline X$, which is compact, we can apply the Arzel\`a--Ascoli theorem to extract a subsequence that converges uniformly to a closed curve $\gamma$ in $\overline X$. We also have $\length_\rho(\gamma) \leq \ell_\rho(c)$ since $\gamma$ is $\ell_\rho(c)$-Lipschitz (i.e., length is lower semi-continuous under uniform convergence). Note that $\gamma$ is not reduced to a point since $c$ is essential. 

If $\gamma\cap X$ is not locally geodesic, then we can shorten $\gamma_n$ by a definite amount whenever $n$ is large enough while staying in the same homotopy class, contradicting the hypothesis that $\length_\rho(\gamma_n) \to \ell_\rho(c)$ as $n \to \infty$.

To prove the reverse inequality $\ell_\rho(c) \leq \length_\rho(\gamma)$, the idea is to reconstruct a sequence of curves $\alpha_n$ in $c$ from $\gamma$. We can choose $\alpha_n$ to follow $\gamma$ except where $\gamma$ hits the set $P$. At each of these occurrences, we take $\alpha_n$ to stop a little bit before hitting the given point $p \in P$, wind around $p$ a certain number of times along a small circle centered at $p$, then continue along $\gamma$ where $\gamma$ crosses that circle a second time. Once we have fixed a small enough radius at each of the points in $P$, there is a unique way to choose the winding around each puncture so that $\alpha_n$ belongs to $c$. Indeed, the intersection of a small neighborhood of $\gamma$ with $X$ deformation retracts onto the union of circles and segments of $\gamma$ where we allow $\alpha_n$ to travel. By letting the radii of the circles tend to $0$ as $n \to \infty$, we obtain that $\length_\rho(\alpha_n) \to \length_\rho(\gamma)$ as $n \to \infty$ since the amount of winding around each puncture stays fixed but the circumference of each circle tends to zero. 

Suppose that $c$ contains simple closed curves. We can assume that $\gamma_n$ only has transverse self-intersections for otherwise we can perturb it so that this holds, without increasing its length by more than any positive amount we choose. Let $k$ be the number of self-intersections of $\gamma_n$. If $\gamma_n$ is not simple, then it must bound a monogon or a bigon in $X$ \cite[Theorem 2.7]{HassScott}. By erasing the monogon or by pushing the bigon off to its shorter side, we obtain a curve $\gamma_n'$ which has fewer self-intersections and is not longer than $\gamma_n$ by more than any positive amount we choose, say $1/(kn)$. After a finite number of steps, we obtain a simple closed curve $\beta_n$ in $c$ which is not longer than $\gamma_n$ by more than $1/n$. It follows that $\lim_{n \to \infty} \length_\rho(\beta_n) = \ell_\rho(c)$. We could therefore have chosen $(\beta_n)$ from the start (perhaps ending up with a different limit $\gamma$ after passing to a subsequence).

For the last part of the proof, we assume that $\rho$ is piecewise Euclidean with cone points. Let $Q$ be the set cone points in $X$ and suppose that $\gamma$ is contained in $X\setminus Q$. In particular, $\gamma$ is in $X$ so that it is a closed geodesic by the second paragraph of this proof. The fact that $X\setminus Q$ is locally Euclidean and orientable allows us to push $\gamma$ parallel to itself by using the geodesic flow in the normal direction. Let $\gamma_t$ be the geodesic obtained after pushing $\gamma$ by distance $t\in \R$ to the left (where negative $t$ means pushing to the right). This is well-defined if $t$ is close enough to $0$, but if we bump into $Q$ or $P$, then it is not possible to continue. Let $T$ be the supremum of the set of $s>0$ such that $\gamma_t$ is defined for all $t \in (-s,s)$. 

Suppose that $T < \infty$. By the same argument as in the first paragraph, $\gamma_{t_n}$ subconverges to limiting curves $\gamma_{\pm T}$ in $\overline{X}$ as $t_n \to \pm T$. At least one of these two curves must pass though $Q$ or $P$, otherwise the flow could be continued and $\gamma_t$ would be defined on a larger interval. Thus, we can replace $\gamma$ by one of $\gamma_T$ or $\gamma_{-T}$.

If $T = \infty$, then there is a local isometry from $S^1 \times \R$ to $X \setminus Q$. Since the domain is complete and the range is connected, this local isometry is a covering map \cite[Proposition I.3.28]{BH:MetricSpaces}. But the cylinder $S^1 \times \R$ only covers itself, tori, or Klein bottles. The only one of these which is orientable and whose completion is compact is the torus.
\end{proof}

\subsection{Systolic ratio}

Besides the homotopy class of an essential closed curve in a Riemann surface $X$, there is another set of curves $\Gamma$ whose extremal length one might want to compute, namely, the set $\Gamma_{\mathrm{all}}$ of \emph{all} essential closed curves in $X$.

Given a conformal metric $\rho$ on $X$, the \emph{systole} of $(X,\rho)$ is
\[
\sys(X,\rho) := \ell_\rho(\Gamma_{\mathrm{all}}) = \inf_{\gamma \in \Gamma_{\mathrm{all}}} \length_\rho(\gamma)
\]
and
\[
\SR(X,\rho)  := \frac{\sys(X,\rho)^2}{\area_\rho(X)}
\]
is its \emph{systolic ratio}. By definition, the extremal length of $\Gamma_{\mathrm{all}}$ is equal to
\[
\EL(\Gamma_{\mathrm{all}},X) = \sup_{\rho \in \conf(X)} \frac{\ell_\rho(\Gamma_{\mathrm{all}})^2}{\area_\rho(X)} = \sup_{\rho \in \conf(X)} \SR(X,\rho)=:\overline{\SR}(X),
\]
that is, to the \emph{optimal systolic ratio} in the conformal class of $X$. The \emph{isosystolic pro\-blem} consists in maximizing the optimal systolic ratio over all conformal classes on a given manifold. 

Extremal metrics for the isosystolic problem are known for the torus \cite{Pu}, the projective plane \cite{Pu} and the Klein bottle \cite{Bav:Klein1,Bav:Klein2}. When the conformal class is fixed, there are two further examples of optimal metrics known in genus three \cite[Section 7]{Calabi} and a one-parameter family in genus five \cite[Section 6]{WZ}.

We emphasize that the extremal length systole
\[
\ELsys(X) = \inf_{c \in \calC(X)} \EL(c,X) = \inf_{c \in \calC(X)} \sup_{\rho \in \conf(X)}  \frac{\ell_\rho(c)^2}{\area_\rho(X)}
\]
is different from the optimal systolic ratio
\[
\overline{\SR}(X) = \sup_{\rho \in \conf(X)} \frac{\ell_\rho(\Gamma_\mathrm{all})^2}{\area_\rho(X)}=\sup_{\rho \in \conf(X)} \inf_{c \in \calC(X)} \frac{\ell_\rho(c)^2}{\area_\rho(X)}.
\]
 
 The maximin-minimax principle (which says that $\sup_x \inf_y F(x,y) \leq \inf_y \sup_x F(x,y)$ for any function $F$) yields the inequality
\begin{equation} \label{eq:SRvsSysEL}
\overline{\SR}(X) \leq \ELsys(X).
\end{equation}

If  $X$ is a torus, then equality holds in \eqref{eq:SRvsSysEL}. This is because the extremal metric (for extremal length) is the same for all homotopy classes of curves. Indeed, every essential closed curve $\gamma$ in a torus is homotopic to a power $\alpha^k$ of a simple closed curve $\alpha$. By \exref{ex:simple}, the extremal metric for $\alpha$ (and hence $\gamma$) is realized by a holomorphic quadratic differential $q$ on $X$. The resulting metric is just the flat metric $\rho$ on $X$ because $q$ does not have any singularities (the space of holomorphic quadratic differentials on $X$ is $1$-dimensional). It follows that any homotopy class $c$ with minimal $\rho$-length realizes the extremal length systole, giving \[\overline{\SR}(X) \geq \SR(X,\rho) =\frac{\ell_\rho(c)^2}{\area_\rho(X)}= \EL(c,X)\geq\ELsys(X),\]
and hence $\overline{\SR}(X) = \ELsys(X)$.

By Loewner's torus inequality \cite{Pu}, $\overline{\SR}(X)$ is strictly maximized at the regular hexa\-go\-nal torus, where it takes the value $2/\sqrt{3}$. In fact, it is easy to see that this is the only local maximum (see below). Thus, the same holds for the extremal length systole.

\begin{cor} \label{cor:torus}
The extremal length systole of tori attains a unique (strict) local maximum at the regular hexagonal torus, where it takes the value $2/\sqrt{3}$. 
\end{cor}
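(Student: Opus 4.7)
The plan is to leverage the identity $\ELsys(X) = \overline{\SR}(X)$ for tori, which was established in the paragraph preceding the corollary. This reduces the problem to analyzing the systolic ratio of flat tori, a concrete function on Teichm\"uller space.

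First I would parametrize the Teichm\"uller space of tori by the upper half-plane $\H$, identifying $\tau \in \H$ with the flat torus $X_\tau = \C/(\Z + \tau\Z)$. The flat metric has area $\Im(\tau)$, and the length of the shortest closed geodesic equals $L(\tau) := \min_{(m,n)\neq(0,0)} |m + n\tau|$. Thus
\[
\ELsys(X_\tau) = \overline{\SR}(X_\tau) = \frac{L(\tau)^2}{\Im(\tau)}.
\]
Since $\ELsys$ is a conformal invariant, it descends to the moduli space $\H/\PSL(2,\Z)$, so I may restrict attention to the standard fundamental domain
\[
F = \{\tau \in \H : |\Re(\tau)| \leq \tfrac12,\ |\tau| \geq 1\}.
\]

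Second, I would verify that $L(\tau) = 1$ for every $\tau \in F$, so that the systolic ratio reduces to $1/\Im(\tau)$. The bound $|m+n\tau|^2 = m^2 + 2mn\Re(\tau) + n^2|\tau|^2 \geq m^2 - |mn| + n^2$ (using $|\Re(\tau)| \leq 1/2$ and $|\tau|\geq 1$) together with the observation that for integers $|m|,n \geq 1$ one has $m^2 + n^2 - |mn| \geq 1$, and the trivial cases $m = 0$ or $n = 0$, handle this. Equality is attained at $(m,n) = (\pm 1,0)$ throughout $F$, with additional minimizers only on $\partial F$.

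Third, maximizing $\ELsys(X_\tau) = 1/\Im(\tau)$ over $F$ amounts to minimizing $\Im(\tau)$ over $F$. On the vertical sides $\Re(\tau) = \pm 1/2$ we have $\Im(\tau) = \sqrt{|\tau|^2 - 1/4}$, which is decreasing as $|\tau|$ decreases to $1$; on the arc $|\tau| = 1$ we have $\Im(\tau) = \sqrt{1 - \Re(\tau)^2}$, minimized at $|\Re(\tau)| = 1/2$. Thus the minimum of $\Im(\tau)$ over $F$ is $\sqrt{3}/2$, attained only at the corner points $\pm 1/2 + i\sqrt{3}/2$, which represent the single hexagonal torus in moduli space. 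This gives the value $\ELsys = 2/\sqrt{3}$.

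Finally, for the uniqueness of the local maximum: in the interior of $F$, the smooth function $1/\Im(\tau)$ has nonvanishing gradient and hence no critical points, so any local maximum lies on $\partial F$. The one-variable analysis above shows $\Im(\tau)$ is strictly monotone along each of the two boundary arcs of $F$ away from the corner, so $1/\Im(\tau)$ has no local maximum on $\partial F$ other than at the corner. Since all other points in $F$ satisfy $\Im(\tau) > \sqrt{3}/2$ strictly, the hexagonal torus is the unique local (hence global) maximizer, and it is strict. The main (minor) subtlety is just being careful about the boundary identifications in $F$, so that the two corner points are recognized as a single point in moduli space; no real obstacle arises.
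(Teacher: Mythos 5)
Your proof is correct and takes essentially the same approach as the paper: identify $\ELsys$ with $\overline{\SR}$ on tori, restrict to the standard fundamental domain $F$ for $\PSL(2,\Z)$, observe that the lattice systole equals $1$ there so $\ELsys = 1/\Im\tau$, and show this is uniquely and strictly locally maximized at the corners of $F$. You supply more detail than the paper (which states both the systole computation and the maximization as being easy to see), but the substance is the same.
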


The proof of Loewner's torus inequality is quite straightforward once we know that the optimal metric is flat. Indeed, the moduli space of flat tori up to similarity is equal to the modular surface $\H/\PSL(2,\Z)$. The standard fundamental domain for the action of $\PSL(2,\Z)$ on $\H$ is \[F=\{ z \in \H : |\re z| \leq 1/2 \text{ and } |z|\geq 1 \}.\] For any $\tau \in F$, it is easy to see that the shortest non-zero vectors in the lattice $\Z + \tau \Z$ have length $1$. This means that the systolic ratio of the torus $\C / (\Z + \tau \Z)$ is the reciprocal of its area, or $1/ \im \tau$. This quantity is only locally maximized at the corners $\{e^{\pi i / 3}, e^{2\pi i / 3} \}$ of $F$, both of which represent the regular hexagonal torus.

The equality $\overline{\SR}(X) = \ELsys(X)$ also holds for the projective plane because there is only one homotopy class of primitive essential closed curves in that case.

However, if $X$ is the thrice-punctured sphere, then 
\begin{equation}
\overline{\SR}(X) = 2 \sqrt{3}  < 4 = \ELsys(X).
\end{equation}
We explain the first equality here while the second one will be shown in \corref{cor:ELsysPants}. 

\begin{prop} \label{prop:SRsphere}
The Euclidean metric on the double equilateral triangle punctured at the vertices is optimal for the systolic ratio, giving $\overline{\SR}(X) = 2 \sqrt{3}$ for the thrice-puntured sphere.
\end{prop}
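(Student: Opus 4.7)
The plan is to apply Beurling's criterion with $\Gamma=\Gamma_{\mathrm{all}}$ and $\rho_0$ the flat metric on the double equilateral triangle (punctured at the vertices), whose total area is $\sqrt{3}/2$. Since $\overline{\SR}(X)=\EL(\Gamma_{\mathrm{all}},X)$ by definition, the conclusion reduces to showing that $\rho_0$ is extremal for $\Gamma_{\mathrm{all}}$, with extremal value $2\sqrt{3}$.

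First I would identify the systole of $(X,\rho_0)$. Since the thrice-punctured sphere has no essential simple closed curves, \propref{prop:tight} forces every length-minimizing essential curve to have at least one self-intersection; the simplest candidate is a figure-eight winding around two of the three cone points $A$ and $B$ of cone angle $2\pi/3$. Unfolding the cone at $A$ into a planar sector shows that a geodesic loop based at a point $P$ with $|PA|=d$ going once around $A$ has length $2d\sin(\pi/3)=d\sqrt{3}$. Consequently, a figure-eight with crossing $P$ and loops around $A,B$ has length at least $(|PA|+|PB|)\sqrt{3}\ge \sqrt{3}\,d(A,B)=\sqrt{3}$, with equality iff $P$ lies on the edge $AB$. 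A tangent-matching computation in the unfolded cones at $A$ and $B$ confirms that each such figure-eight is a smooth closed geodesic. A short case analysis (curves with more self-intersections or involving the third vertex are longer) then yields $\sys(X,\rho_0)=\sqrt{3}$, realized by a one-parameter family of figure-eights along each of the three edges.

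The crux is to verify that the set $\Gamma_0$ of all length-$\sqrt{3}$ figure-eights sweeps $X$ evenly in the sense of \eqref{eq:fubini2}. For each edge $e$ and each of its two endpoints $v$, the corresponding one-parameter family of geodesic loops around $v$ (with base point moving along $e$) unfolds into a family of parallel chords in the planar sector at $v$, sweeping an open region of $X$ with constant Jacobian $1/2$; i.e.\ the pull-back area element is $dA=\tfrac12\,dt\,ds$, where $t$ parametrizes the crossing position along $e$ and $s$ is arc length along the loop. This produces six loop-families in total (two per edge). The three medians of each equilateral triangle partition it into six medial sub-triangles, and a combinatorial accounting -- greatly simplified by the $S_3$-symmetry permuting the three cone points -- shows that each medial sub-triangle is covered by exactly three of the six loop-families. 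Summing therefore yields the identity
\[
\int_X f\,\rho_0^2\,dA \;=\; \tfrac{1}{6}\sum_{e}\int_0^1\!\!\left(\int_{\gamma_{e,t}} f\,\rho_0\,ds\right)dt,
\]
which is exactly \eqref{eq:fubini2} for $\Gamma_0$ with $k=1/6$.

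Beurling's criterion then gives $\EL(\Gamma_{\mathrm{all}})=\ell_{\rho_0}(\Gamma_{\mathrm{all}})^2/\area_{\rho_0}(X)=(\sqrt{3})^2/(\sqrt{3}/2)=2\sqrt{3}$, which equals $\overline{\SR}(X)$ by definition. The main obstacle is the even-sweep check: without exploiting symmetry, explicitly tracking which loop-family covers which medial sub-triangle and with what multiplicity is a tedious combinatorial exercise, but the dihedral symmetry of the double equilateral triangle reduces the work to a direct verification on a single medial sub-triangle.
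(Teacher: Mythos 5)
Your overall strategy — Beurling's criterion with the flat double-equilateral-triangle metric, followed by an even-sweep verification — is the same as the paper's. However, both of your key steps take different routes, and one of them has a genuine gap.

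\emph{The systole lower bound.} You compute that the length-$\sqrt{3}$ figure-eights are smooth closed geodesics, which is correct (I checked the tangent-matching), but then assert that ``a short case analysis'' shows all other essential curves are longer. This is where the gap lies: the space of homotopy classes of essential curves is infinite, so what is actually needed is a uniform lower bound for \emph{every} class, not an enumeration of a few candidates. The paper gets this in one line from Proposition~\ref{prop:tight}: a pulled-tight representative $\gamma$ passes through a vertex $v$, and it must intersect the opposite edge (otherwise the nearby approximating curves in its class would lie in a punctured disk around $v$, contradicting essentiality); since the vertex-to-opposite-edge distance is $\sqrt{3}/2$, a closed curve through both has length $\geq \sqrt{3}$. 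That argument is not a case analysis and covers everything at once; your ``case analysis'' would essentially have to re-derive it.

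\emph{The even-sweep.} Your argument here is genuinely different from the paper's and is correct. You decompose the figure-eights into six one-parameter loop families (one per vertex-edge flag), check the Jacobian is $1/2$ (which follows since the perpendicular distance of a loop-chord from the apex is half the base-point's distance), and verify via the $S_3$-symmetry that each medial sub-triangle is covered by exactly three families — I confirmed this for the medial sub-triangle $A M_{AB} G$, which is covered by the families $(A,AB)$, $(A,AC)$, and $(B,AB)$, and transitivity of the $S_3$-action on medial sub-triangles does the rest. The paper instead uses the degree-$3$ branched cover from the hexagonal torus: one direction of parallel systoles foliates the torus minus three geodesics into three cylinders, and the covering map pushes this foliation down to the double triangle $3$-to-$1$. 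The torus-cover argument is more conceptual and shorter, but your direct combinatorial accounting yields the same identity \eqref{eq:fubini2} and has the virtue of making explicit which regions each loop family covers. Both are valid; the paper's buys brevity, yours buys transparency.
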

\begin{proof}
This is the metric $\rho$ described in \exref{ex:isoceles}. The shortest curves are not those depicted in \figref{fig:trefoil} though, they are figure-eight curves of length $\sqrt{3}$. 

To prove that every essential curve in $X$ has length at least $\sqrt{3}$, we use \propref{prop:tight} to get a closed curve $\gamma$ into the metric completion $\overline{X}$ (the unpunctured double equilateral triangle) such that $\gamma$ is a limit of curves in $c$, satisfies $\length_\rho(\gamma)\leq \ell_\rho(c)$, and $\gamma$ passes through a vertex $v$ of $\overline{X}$.

The curve $\gamma$ must also intersect the edge opposite to $v$, for otherwise the curves in $c$ close enough to $\gamma$ could be homotoped into a neighborhood of $v$, contradicting the assumption that they are essential. As the distance from $v$ to the opposite edge is $\sqrt{3}/2$, we obtain \[\ell_\rho(c) \geq \length_\rho(\gamma) \geq \sqrt{3}.\]

It remains to show that $X$ is swept out evenly by shortest essential closed curves. This is best seen by observing that there is a covering map from the regular hexagonal torus $Y$ punctured at three points to $X$ (see \figref{fig:torus_cover}). The shortest essential closed curves in $Y$ have length $\sqrt{3}$, and those that do not pass through one of the three punctures project to shortest essential closed curves in $X$. These are organised in three parallel families, each of which foliates $Y$ minus three closed geodesics. By picking any one of these parallel families, we obtain an isometric immersion $\pi$ from a union $U$ of $3$ open Euclidean cylinders to $X$ which is $3$-to-$1$ almost everywhere and maps each simple closed geodesic in $U$ to a shortest essential closed curve in $X$. As before, we obtain
\[
3  \int_X f \rho^2 = \int_U (f\circ \pi) \,(\pi^*\rho)^2 = \int_y  \left( \int_{\alpha_y} (f\circ \pi) \, \pi^*\rho \right) dy =\int_{y}  \left( \int_{\pi(\alpha_y)} f \rho \right) dy
\]
for any measurable function $f$ on $X$, where $y$ is a height coordinate in $U$ and $\alpha_y$ is the closed geodesic at height $y$.

By Beurling's criterion, $\rho$ is extremal for the set of curves $\Gamma_\mathrm{all}$, so that
\[
\overline{\SR}(X) =\EL(\Gamma_\mathrm{all},X) = \frac{\ell_\rho(\Gamma_\mathrm{all})^2}{\area_\rho(X)} = \frac{\sqrt{3}^2}{\sqrt{3}/2}=2\sqrt{3}. \qedhere
\]
\end{proof}

\begin{figure}[htp]
    \centering
    \includegraphics[height=1.5in]{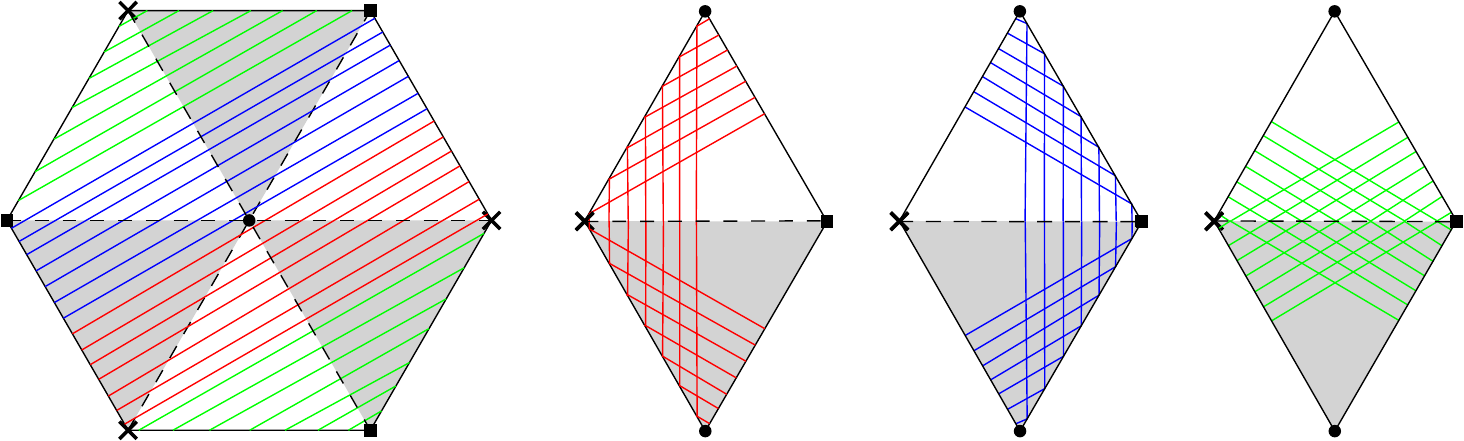}
    \caption{Degree $3$ cover from the hexagonal torus to the double triangle. None of the three homotopy classes of figure-eights sweeps out the double triangle evenly by itself, but together they do.}
    \label{fig:torus_cover}
\end{figure}

Similarly, if $X$ is the Bolza surface, then
\begin{equation}
\overline{\SR}(X) \leq \frac{\pi}{3}  < \sqrt{2} = \ELsys(X)
\end{equation}
where the first inequality is from \cite[Theorem~5.1]{KatzSabourau} and the equality on the right-hand side will be shown in \corref{cor:roottwo}.
\section{Systoles are simple} \label{sec:simple}

In this section, we show that we can restrict to \emph{simple} closed curves in the definition of the extremal length systole (except in the case where no such curve is essential). For the systole with respect to a fixed metric, this is an easy surgery argument, but since the extremal lengths of different curves are computed using different metrics, the argument is more subtle for the extremal length systole. As the statement is obvious for tori, we restrict to hyperbolic surfaces in this section.

We start by showing that the infimum in the definition of extremal length systole is achieved. This statement will also be used in \secref{sec:derivatives} to show that the extremal length systole is a generalized systole in the sense of Bavard.

\begin{lem} \label{lem:finite}
Let $X$ be a hyperbolic surface of finite area. For any $L>0$, there are at most finitely many homotopy classes $c$ of essential closed curves in $X$ such that $\EL(c,X) \leq L$. In particular, there is a homotopy class $c\in \calC(X)$ such that $\EL(c,X)=\ELsys(X)$.
\end{lem}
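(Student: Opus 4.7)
The plan is to use the hyperbolic metric itself as a test conformal metric to convert the extremal length bound into a hyperbolic length bound, then invoke the classical fact that only finitely many closed geodesics have bounded length on a hyperbolic surface of finite area.

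First I would fix the hyperbolic metric $\rho_h$ on $X$, which lies in $\conf(X)$ since $X$ has finite hyperbolic area $A$. By the very definition of extremal length as a supremum over conformal metrics, for every $c \in \calC(X)$ we have
\[
\EL(c,X) \;\geq\; \frac{\ell_{\rho_h}(c)^2}{A}.
\]
Thus if $\EL(c,X) \leq L$, then $\ell_{\rho_h}(c) \leq \sqrt{LA}$. Since $c$ is essential (hence cannot be pushed into a cusp), its hyperbolic-length infimum is realized by the unique closed geodesic $\gamma_c$ in the class, so $\gamma_c$ has hyperbolic length at most $\sqrt{LA}$.

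Next I would invoke the standard fact that a hyperbolic surface of finite area contains only finitely many closed geodesics of length below any given bound. One way to see this is via the prime geodesic theorem, but for our purposes a direct compactness argument suffices: the thick part of $X$ is compact, each short geodesic lies in the thick part except possibly for segments traversing the finitely many thin parts (cusps and short-geodesic collars), and a simple counting argument via the injectivity radius on the thick part shows that the number of geodesic loops of length at most $\sqrt{LA}$ based at a fixed point grows at most exponentially in $\sqrt{LA}$, in particular is finite. Since distinct free homotopy classes give distinct closed geodesics, this proves the finiteness statement.

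For the ``in particular'' clause, note that $\ELsys(X) < \infty$: any one essential simple closed curve has finite extremal length (e.g.\ by \exref{ex:simple}, or simply by testing with $\rho_h$ which gives a finite upper bound on hyperbolic length and a finite lower bound on area). Applying the first part with $L = \ELsys(X) + 1$, the set $\{c \in \calC(X) : \EL(c,X) \leq \ELsys(X)+1\}$ is finite and nonempty, so its minimum is attained at some $c_0$, and by definition of the infimum $\EL(c_0, X) = \ELsys(X)$.

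I do not expect any serious obstacle; the only point requiring care is making sure the essentiality hypothesis is being used correctly, namely that it prevents the hyperbolic length infimum from being zero by ruling out curves pushed into cusps. Everything else is the immediate combination of Beurling-style comparison with the counting of short hyperbolic geodesics.
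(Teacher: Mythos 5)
Your proof is correct and follows essentially the same route as the paper: use the hyperbolic metric as a test metric to convert the extremal length bound into a hyperbolic length bound, then invoke the finiteness of closed geodesics of bounded length on a finite-area hyperbolic surface. The paper simply cites this finiteness as ``discrete and compact'' whereas you sketch a thick-thin/injectivity-radius argument, but the content is the same.
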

\begin{proof}
In the complete hyperbolic metric $\rho$ on $X$, every homotopy class $c \in \calC(X)$ contains a unique closed geodesic and for any $B>0$, there are at most finitely many closed geodesics of length at most $B$ (since this set is discrete and compact). By definition of extremal length, we have $\EL(c,X) \geq \ell_\rho(c)^2 / \area_\rho(X)$. Thus, an upper bound on extremal length implies an upper bound on hyperbolic length, which restricts to finitely many homotopy classes. The infimum is therefore a minimum.
\end{proof}

We proceed to show that the extremal length systole is only realized by essential closed curves with the minimum number of self-intersections possible.

\begin{thm}\label{thm:ELsyssimple}
Let $X$ be a hyperbolic surface of finite area. Then any essential closed curve $\gamma$ in $X$ such that $\EL(\gamma,X)=\ELsys(X)$ is simple unless $X$ is the thrice-punctured sphere, in which case the figure-eight curves are the ones with minimal extremal length. 
\end{thm}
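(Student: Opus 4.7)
The plan is to split into two cases depending on whether $X$ admits essential simple closed curves.

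For the thrice-punctured sphere, I would observe that every simple closed curve separates one puncture from the other two and thus bounds a once-punctured disk, which can be homotoped into an arbitrarily small neighborhood of the separated puncture. Hence no simple closed curve is essential, so every essential closed curve is non-simple. The figure-eight curves of \exref{ex:figure-eight} with $\EL=4$ are then the natural candidates; the identification $\ELsys=4$ in that case is established by the forthcoming corollary.

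For a hyperbolic $X$ different from the thrice-punctured sphere, I would argue by contradiction: suppose $\gamma \in \calC(X)$ realizes $\ELsys(X)$ (such a minimizer exists by \lemref{lem:finite}) and is non-simple. Choose a weakly extremal conformal metric $\rho$ for $\gamma$ and apply \propref{prop:tight} to replace $\gamma$ by a locally geodesic representative of the same $\rho$-length, which must still have a transverse self-intersection at some point $p$. Smoothing at $p$ cuts $\gamma$ into two loops $\gamma_1,\gamma_2$ based at $p$ with $\length_\rho(\gamma_1)+\length_\rho(\gamma_2)=\length_\rho(\gamma)$. Both $\gamma_i$ must be essential: if one were null-homotopic, deleting the resulting bubble would produce a representative of $[\gamma]$ strictly shorter than the tight one, contradicting length-minimality. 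By iterating the surgery on whichever loop is still non-simple, I extract an essential simple closed curve $\alpha$ with $\length_\rho(\alpha) \le \length_\rho(\gamma)/2$.

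The main obstacle is then to upgrade this strict $\rho$-length inequality to the strict extremal-length inequality $\EL(\alpha,X) < \EL(\gamma,X)$. Because $\rho$ is extremal only for $\gamma$, the bound $\EL(\alpha) \geq \length_\rho(\alpha)^2/\area_\rho(X)$ is only a lower bound and does not by itself rule out $\EL(\alpha)\ge \EL(\gamma)$. To bridge the gap I would use the Jenkins--Strebel extremal metric $\rho_\alpha$ for the simple curve $\alpha$ together with the Kerckhoff-type bound
\[
\EL(\alpha)\cdot \EL(\gamma)\ \geq\ i(\alpha,\gamma)^2,
\]
which follows from the length-area method by counting how many times $\gamma$ must cross the flat cylinder of $\rho_\alpha$. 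The combinatorics of the smoothing at $p$ should guarantee $i(\alpha,\gamma)\ge 1$, after which a careful comparison of areas and lengths is meant to close the argument. The hardest part is this combinatorial step: it fails precisely when the smoothed loops are inessential, which is exactly what happens on the thrice-punctured sphere and justifies the exceptional case in the statement.
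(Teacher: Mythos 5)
Your plan correctly identifies the right high-level skeleton (existence of a minimizer via \lemref{lem:finite}, smoothing at self-intersections, iterating until you reach a simple curve, then upgrading to a strict extremal-length inequality) and you even identify the right obstacle, but the argument has genuine gaps at its two crucial points.

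First, the essentiality argument for the smoothed loops only rules out the null-homotopic case. On a punctured surface a loop can be inessential without being null-homotopic, namely when it is peripheral to a puncture; ``deleting the bubble'' is then unavailable because $\gamma_2$ is no longer homotopic to $\gamma$. The paper shows that if all three components of the two possible smoothings (the pair-of-loops smoothing \emph{and} the single-curve smoothing, which your proposal does not mention) are inessential, then $X$ is forced to be the thrice-punctured sphere and $\gamma$ a figure-eight. That analysis is what produces the exceptional case in the statement, and you cannot recover the theorem without it. Relatedly, you defer the thrice-punctured sphere to a ``forthcoming corollary,'' but in the paper that corollary (\corref{cor:ELsysPants}) is a consequence of the present theorem, so the reference is circular.

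Second, and more seriously, the strict inequality step is not closed by the proposed route. You correctly note that the bound coming from the extremal metric for $\gamma$ goes the wrong way, and you switch to the Jenkins--Strebel metric $\rho_\alpha$ for $\alpha$. With that metric, if $\ell_{\rho_\alpha}(\gamma) > \ell_{\rho_\alpha}(\alpha)$ you are immediately done; the hard case is the equality $\ell_{\rho_\alpha}(\gamma) = \ell_{\rho_\alpha}(\alpha)$, which you do not address. The intersection-number inequality $\EL(\alpha)\EL(\gamma) \geq i(\alpha,\gamma)^2$ cannot save this: on the one hand $i(\alpha,\gamma)$ can be $0$, most notably when $\gamma$ is freely homotopic to $\alpha^k$ for $k\ge 2$, which is one of the residual cases one must handle; on the other hand, even when $i(\alpha,\gamma)\ge 1$, the inequality only bounds the \emph{product} from below, which is not a comparison between $\EL(\alpha)$ and $\EL(\gamma)$. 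The paper instead argues: pull $\gamma$ tight in $(\overline{X},\rho_\alpha)$ using \propref{prop:tight}; if the minimizer is a regular closed geodesic it must be a core-curve power $\alpha^k$ with $k>1$, giving $\ell_{\rho_\alpha}(\gamma) = k\,\ell_{\rho_\alpha}(\alpha) > \ell_{\rho_\alpha}(\alpha)$; otherwise every minimizer for $[\gamma]$ passes through a cone point or puncture, there are only finitely many such minimizers, and one can shrink $\rho_\alpha$ on a compact set disjoint from all of them, which leaves $\ell_{\rho_\alpha}(\gamma)$ unchanged but strictly decreases the area and hence strictly increases $\ell_{\rho_\alpha}(\gamma)^2/\area_{\rho_\alpha}(X)$ past $\EL(\alpha)$. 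A parallel, more delicate argument (using the $\R^2\setminus\Z^2$ branched cover) handles the case $\alpha=$ figure-eight. This local modification of the extremal metric is the key idea missing from your proposal.
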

\begin{proof}
Let $\gamma$ be an essential closed curve in $X$ that is not homotopic to a simple closed curve or to a figure-eight on the thrice-punctured sphere. Our goal is to find an essential closed curve $\alpha$ with strictly smaller extremal length than $\gamma$.

There are two ways to perform surgery on $\gamma$ at an essential self-intersection which we call \emph{smoothings}. These are obtained by cutting the circle at two preimages of the intersection point and regluing in the two other possible ways (see \cite[Definition 2.16]{MGT:20}). As such, the smoothings of $\gamma$ have the same image as $\gamma$ and the same length with respect to any metric. One of the two smoothings is a pair of curves and the other one is a single curve.

Suppose first that all three components of the two smoothings of $\gamma$ at some essential self-intersection $x\in \gamma$ are inessential. Consider the smoothing $\gamma'$ of $\gamma$ at $x$ which has two components. By assumption, each component of $\gamma'$ can be homotoped into a puncture of $X$ (it cannot be homotopic to a point since the self-intersection at $x$ is essential). It can therefore be homotoped to a power of a simple loop from $x$ that encloses the puncture. Thus, up to homotopy, we may assume that the image of $\gamma$ is homeomorphic to a figure-eight curve bounding two punctures. Since the other smoothing $\gamma''$ of $\gamma$ at $x$ is also inessential, the third component of $X \setminus \gamma$ must be a punctured disk, so that $X$ is the thrice-punctured sphere. This also implies that the two components of $\gamma'$ are simple, because the fundamental group of $X$ is the free group on the two simple loops $a$ and $b$ forming the figure-eight, and the only words homotopic to the third puncture are conjugate to powers of $ab$. If $\gamma \sim a^m b^n$, then its other smoothing $\gamma''$ is $a^m b^{-n}$, which is conjugate to a power of $ab$ only if $m=\pm 1$ and $n = \mp 1$. We conclude that $\gamma$ is homotopic to a figure-eight on the thrice-punctured sphere, contrary to our assumption.

It follows that at least one of the two smoothings of $\gamma$ has an essential component $\beta$. Then $\EL(\beta) \leq \EL(\gamma)$ since $\ell_{\rho}(\beta) \leq \ell_{\rho}(\gamma)$ for every conformal metric $\rho$ on $X$. This is because any curve $c$ homotopic to $\gamma$ has a smoothing with one component $b$ homotopic to $\beta$ (see \cite[Lemma~2.1]{NC:01} or ~\cite[Lemma~2.17]{MGT:20}) and $b$ is at most as long as $c$ with respect to $\rho$. 

If $\beta$ still has essential self-intersections, then we can repeat the above process of smoothing and keeping an essential component until we are left with a curve $\alpha$ for which this is no longer possible. Then $\alpha$ is either simple or a figure-eight on the thrice-punctured sphere, and we have $\EL(\alpha) \leq \EL(\gamma)$. The tricky part is to prove that the inequality is strict. In either case, we know what the extremal metric $\rho$ for $\alpha$ looks like. Since $\alpha$ was obtained from $\gamma$ after repeated smoothings, we have $\ell_\rho(\alpha) \leq \ell_\rho(\gamma)$. If $\ell_\rho(\alpha) < \ell_\rho(\gamma)$, then $\EL(\alpha)< \EL(\gamma)$ as required. Otherwise, we need to modify the metric $\rho$.

Assume that $\alpha$ is simple. By \exref{ex:simple}, the extremal metric $\rho$ comes from an integrable holomorphic quadratic differential $q$ on $X$. In this metric, the punctures are at a finite distance away, so that the metric completion $\overline{X}$ is a closed surface. By \propref{prop:tight}, the curve $\gamma$ can be pulled tight to a curve $\gamma^*$ in $\overline{X}$ such that $\length_\rho(\gamma^*)=\ell_\rho(\gamma)$. If $\gamma^*$ is a regular closed geodesic in $X$ (that does not pass through cone points), then it must be a power of a simple closed curve (necessarily homotopic to $\alpha$) because its slope with respect to $q$ is constant. As we assumed that $\gamma$ was not simple, we have that $\gamma$ is homotopic to a proper power $\alpha^k$ with $k>1$. We thus have $\ell_\rho(\gamma)= k \ell_\rho(\alpha) > \ell_\rho(\alpha)$ so that $\EL(\alpha)<\EL(\gamma)$. 

On the other hand, if every length-minimizer $\gamma^*$ for the homotopy class $[\gamma]$ passes through a cone point or a puncture, then there are only finitely many possible length-minimizers for $[\gamma]$, as there are only finitely many geodesic segments of length at most $L$ between the points in this finite set (for any $L>0$). In fact, the length-minimizer is unique in this case, but we will not need this. All we need to use is that there is a compact set $K \subset X$ with nonempty interior that is disjoint from every length-minimizer. Then every curve homotopic to $\gamma$ that passes through $K$ is longer than $\ell_\rho(\gamma)$ by a definite amount, for otherwise the compactness argument from the proof of \propref{prop:tight} would yield a length-minimizer passing through $K$. We can therefore decrease $\rho$ by a small amount in $K$ without affecting $\ell_{\rho}(\gamma)$, but thereby decreasing the area of $\rho$. This improved metric shows that $\EL(\gamma) > \EL(\alpha)$, as required.

Next, suppose that $\alpha$ is a figure-eight on the thrice-punctured sphere. Then the extremal metric $\rho$ for $\alpha$ is the double of an isoceles right triangle as described in \exref{ex:figure-eight} (scaled to have edge lengths $1$ and $\sqrt{2}$). If the homotopy class of $\gamma$ does not contain any closed geodesics in $X$, then we can apply the same trick as above to reduce $\rho$ in a set $K$ away from all the length-minimizers to obtain the strict inequality $\EL(\gamma)>\EL(\alpha)$.

The only case left is if $[\gamma]$ contains closed geodesics in $X$. In contrast with the case of simple closed curves, the metric $\rho$ comes from a quartic differential rather than a qua\-dra\-tic differential, so that its closed geodesics can self-intersect (at right angles). However, we can still prove that $\ell_\rho(\gamma)> \ell_\rho(\alpha)$. Suppose on the contrary that $\ell_\rho(\gamma)= \ell_\rho(\alpha)$. Let $\gamma^* \subset X$ be a closed geodesic homotopic to $\gamma$ and let $\gamma^\dagger \subset \overline{X}$ be a curve obtained by pushing $\gamma^*$ to one side until it passes through a puncture, as in the proof of \propref{prop:tight}. Consider the covering map from $\R^2 \setminus \Z^2 \to X$ coming from the regular tiling by of the plane by isoceles right triangles. We can lift $\gamma^\dagger$ under this covering to an arc in the plane with endpoints in $\Z^2$. Since $\gamma^\dagger$ is a limit of closed geodesics, that lift must be a straight line segment $I$. Its length is therefore equal to $\sqrt{m^2+n^2}$ for some integers $m$ and $n$. The only way to obtain $\ell_\rho(\alpha)=2$ is if one of $m$ or $n$ is zero, meaning that $I$ is parallel to one of the coordinate axes. Since any closed geodesic in $X$ elevates to a straight line in $\R^2 \setminus \Z^2$, the deck transformation corresponding to $\gamma^*$ must be a translation. Furthermore, as $\gamma^*$ is parallel to $\gamma^\dagger$ and of the same length, that translation is by distance $2$ along one of the coordinate axes. It follows that $\gamma^*$ is a figure-eight in $X$ (see \figref{fig:figure-eight_cyl} and \figref{fig:figure-eight}). This contradicts our initial hypothesis that $\gamma$ was not homotopic to a figure-eight on the thrice-punctured sphere. We conclude that $\ell_\rho(\gamma)> \ell_\rho(\alpha)$ and hence $\EL(\gamma)> \EL(\alpha)$.
\end{proof}

We obtain the extremal length systole of the thrice-punctured sphere as a bonus.

\begin{cor} \label{cor:ELsysPants}
The extremal length systole of the thrice-punctured sphere is equal to $4$.
\end{cor}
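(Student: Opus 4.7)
The upper bound is immediate from \exref{ex:figure-eight}, where a specific figure-eight curve $\gamma_0$ on the thrice-punctured sphere $X$ is shown to satisfy $\EL(\gamma_0) = 4$ via the explicit extremal metric built by doubling an isosceles right triangle. This yields $\ELsys(X) \leq 4$ without further work.

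For the matching lower bound, the plan is to first apply \lemref{lem:finite} to produce an essential closed curve $\gamma$ attaining $\EL(\gamma, X) = \ELsys(X)$. Since the thrice-punctured sphere has no essential simple closed curves, \thmref{thm:ELsyssimple} forces $\gamma$ to be homotopic to a figure-eight. Moreover, the analysis in the first paragraph of the proof of \thmref{thm:ELsyssimple} shows that such a $\gamma$ is homotopic to $a^{\pm 1} b^{\mp 1}$, where $a, b$ are small loops around two of the three punctures.

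The last step is a symmetry argument. The thrice-punctured sphere admits a unique conformal structure (its moduli space is a single point) and its full group of biholomorphisms is $S_3$, acting by permuting the three punctures. Composing such a permutation with an orientation reversal if necessary, any figure-eight class of the form above can be sent to the homotopy class of the specific curve $\gamma_0$ of \exref{ex:figure-eight}. By the conformal invariance of extremal length, $\EL(\gamma, X) = \EL(\gamma_0, X) = 4$, so $\ELsys(X) \geq 4$, and combining with the upper bound gives the claimed equality.

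The main (and only substantive) obstacle is the symmetry identification in the last paragraph, namely verifying that every essential figure-eight homotopy class on the thrice-punctured sphere lies in a single orbit of the automorphism group and therefore shares the extremal length already computed in \exref{ex:figure-eight}. Everything else reduces to bookkeeping from earlier results.
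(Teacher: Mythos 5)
Your proof is correct and follows essentially the same route as the paper: invoke \thmref{thm:ELsyssimple} to identify the minimizers as figure-eight curves and \exref{ex:figure-eight} to get the value $4$. Your explicit symmetry argument (that the $S_3$ action on the punctures carries any figure-eight class to the one computed in \exref{ex:figure-eight}) spells out what the paper leaves implicit when it speaks of ``these curves'' in the plural, and the appeal to \lemref{lem:finite} is a harmless but unneeded extra, since \thmref{thm:ELsyssimple} already asserts that the figure-eights attain the minimum.
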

\begin{proof}
\thmref{thm:ELsyssimple} shows that the figure-eight curves have minimal extremal length and \exref{ex:figure-eight} shows that the extremal length of these curves is equal to $4$.
\end{proof}
\section{Branched coverings} \label{sec:branched}

A Riemann surface is \emph{hyperelliptic} if it admits a holomorphic map of degree two onto the Riemann sphere. On a surface of genus $g$, such a holomorphic map has $2g+2$ critical points, called the \emph{Weierstrass points}, and the same number of critical values. The conformal automorphism that swaps the two preimages of any non-critical value and fixes the Weierstrass points is called the \emph{hyperelliptic involution}. 

Every closed Riemann surface of genus two is hyperelliptic, hence arises as a double branched cover of the Riemann sphere branched over six points. As the next lemma shows, extremal length behaves well under branched coverings. This will allow us to reduce computations of extremal length on surfaces of genus two to computations on six-times-punctured spheres.

\begin{lem} \label{lem:branch}
Let $f:X \to Y$ be a holomorphic map of degree $d$ between Riemann surfaces with finitely generated fundamental groups, let $Q \subset Y$ be a finite set containing the critical values of $f$, and let $P \subset f^{-1}(Q)$ be such that $f^{-1}(Q) \setminus P$ is a subset of the critical points of $f$. Then 
\[
\EL(f^{-1}(\gamma) , X \setminus P) = d \cdot \EL(\gamma, Y \setminus Q)
\]
for any simple closed curve $\gamma$ in $Y \setminus Q$.
\end{lem}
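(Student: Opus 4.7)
My plan is to prove both inequalities separately: the lower bound via pullback of the extremal metric for $\gamma$, and the upper bound via pushforward of an arbitrary metric on $X\setminus P$. Throughout, I interpret $f^{-1}(\gamma)$ as the multi-curve in $X\setminus P$ whose components are the disjoint simple closed curves $\tilde\gamma_1,\ldots,\tilde\gamma_m$ making up the set-theoretic preimage; each $\tilde\gamma_i$ covers $\gamma$ with some local degree $k_i\geq 1$, and $\sum_i k_i=d$ (no component hits critical values since $\gamma$ avoids $Q$).

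For the lower bound $\EL(f^{-1}(\gamma))\geq d\cdot\EL(\gamma)$, I would start from the extremal metric $\rho_0=\sqrt{|q|}$ for $\gamma$ on $Y\setminus Q$, which by \exref{ex:simple} comes from a Jenkins--Strebel quadratic differential $q$. The pullback $\tilde\rho:=f^*\rho_0=\sqrt{|f^*q|}$ is a conformal metric on $X\setminus P$ (the pulled-back differential is holomorphic away from $f^{-1}(Q)$ and vanishes at critical points, preserving integrability), and the change-of-variables formula yields $\area_{\tilde\rho}(X)=d\cdot\area_{\rho_0}(Y)$. For any curve $\gamma'\simeq\gamma$ in $Y\setminus Q$, each lift satisfies $\length_{\tilde\rho}(\tilde\gamma_i')=\length_{\rho_0}(f\circ\tilde\gamma_i')=k_i\length_{\rho_0}(\gamma')$, so the total length of the preimage multi-curve equals $d\cdot\length_{\rho_0}(\gamma')$. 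Taking the infimum over $\gamma'\simeq\gamma$ gives $\ell_{\tilde\rho}(f^{-1}(\gamma))=d\cdot\ell_{\rho_0}(\gamma)$, and hence $\ell_{\tilde\rho}(f^{-1}(\gamma))^2/\area_{\tilde\rho}(X)=d\cdot\EL(\gamma)$.

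For the upper bound $\EL(f^{-1}(\gamma))\leq d\cdot\EL(\gamma)$, I would take an arbitrary $\tilde\rho\in\conf(X\setminus P)$ and define a pushforward $\rho_0\in\conf(Y\setminus Q)$ by
\[
\rho_0(y)^2 \;=\; \sum_{x\in f^{-1}(y)} \frac{\tilde\rho(x)^2}{|f'(x)|^2},
\]
which is finite almost everywhere (it can only diverge on the finite critical value set). The change-of-variables formula gives $\area_{\rho_0}(Y)=\area_{\tilde\rho}(X)$. Reparametrizing each lift to project with unit speed onto $\gamma'\simeq\gamma$ and summing over the $d$ sheets of $f$ yields
\[
\sum_{i=1}^m \length_{\tilde\rho}(\tilde\gamma_i') \;=\; \int_{\gamma'} \bigg(\sum_{x\in f^{-1}(y)} \frac{\tilde\rho(x)}{|f'(x)|}\bigg)\,|dy|.
\]
Cauchy--Schwarz applied to the $d$-term inner sum bounds it by $\sqrt{d}\cdot\rho_0(y)$, giving $\sum_i\length_{\tilde\rho}(\tilde\gamma_i')\leq\sqrt{d}\,\length_{\rho_0}(\gamma')$. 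Infimizing over $\gamma'$, squaring, and using $\area_{\rho_0}=\area_{\tilde\rho}$ yields $\ell_{\tilde\rho}(f^{-1}(\gamma))^2/\area_{\tilde\rho}(X)\leq d\cdot\EL(\gamma)$, and the supremum over $\tilde\rho$ gives the upper bound.

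The main technical obstacle is the sheet-summing identity for $\sum_i\length_{\tilde\rho}(\tilde\gamma_i')$, which requires reparametrizing each component of the preimage by its induced projection onto $\gamma'$ and carefully accounting for the Jacobian $|f'|$. Dealing with the critical points of $f$ — which lie in $X\setminus P$ when they are not in $P$, and where $|f'|=0$ — requires some measure-theoretic care, but they form a finite, hence measure-zero, set and so play no role in the length and area integrals. Sharpness of Cauchy--Schwarz occurs precisely when $\tilde\rho(x)/|f'(x)|$ is constant on each fiber, i.e., when $\tilde\rho=f^*\rho_0$, which is consistent with the two bounds being simultaneously attained by the pullback metric.
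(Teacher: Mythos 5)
Your plan to prove the two inequalities separately is a legitimate alternative to the paper's argument, and the upper-bound half is correct: the pushforward metric $\rho_0(y)^2=\sum_{x\in f^{-1}(y)}\tilde\rho(x)^2/|f'(x)|^2$ preserves area, Cauchy--Schwarz on the fiber sum gives $\ell_{\tilde\rho}(f^{-1}(\gamma))\leq\sqrt{d}\,\ell_{\rho_0}(\gamma)$, and taking the sup over $\tilde\rho$ yields $\EL(f^{-1}(\gamma))\leq d\cdot\EL(\gamma)$. This is the classical argument for unramified covers, and you correctly note that the critical set is null for the integrals. (In contrast, the paper bypasses both inequalities by citing Renelt's uniqueness theorem: since $f^*q$ is a Jenkins--Strebel differential whose cylinders have core curves homotopic to $f^{-1}(\gamma)$, it is \emph{the} extremal differential, and the identity follows from $\int_X|f^*q|=d\int_Y|q|$.)

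The gap is in the lower bound. You write that ``taking the infimum over $\gamma'\simeq\gamma$ gives $\ell_{\tilde\rho}(f^{-1}(\gamma))=d\cdot\ell_{\rho_0}(\gamma)$,'' but the preimage construction only produces curves $f^{-1}(\gamma')$ of $\tilde\rho$-length $d\cdot\length_{\rho_0}(\gamma')$ inside the homotopy class of $f^{-1}(\gamma)$, hence only gives the inequality $\ell_{\tilde\rho}(f^{-1}(\gamma))\leq d\cdot\ell_{\rho_0}(\gamma)$. For the lower bound $\EL(f^{-1}(\gamma))\geq d\cdot\EL(\gamma)$ you need the \emph{reverse} inequality $\ell_{\tilde\rho}(f^{-1}(\gamma))\geq d\cdot\ell_{\rho_0}(\gamma)$, i.e., that no multi-curve homotopic to $f^{-1}(\gamma)$ in $X\setminus P$ is shorter than the flat geodesic representative. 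This is not automatic: components $\alpha_i$ of a competing multi-curve need not be preimages of anything downstairs, and since $P$ may be a proper subset of $f^{-1}(Q)$, a homotopy in $X\setminus P$ can sweep through critical points that are not punctures, so $f\circ\alpha_i$ is \emph{a priori} only homotopic to $\gamma^{k_i}$ in $Y\setminus f(P)$, not in $Y\setminus Q$, and one cannot directly invoke the minimality of $\ell_{\rho_0}(\gamma)$. What saves the argument is precisely the observation that the paper uses to define $f^*q$: because $q$ has at most simple poles on $Q$ and these pull back to regular points or zeros at branch points (order $e\cdot(-1)+2(e-1)=e-2\geq 0$ for ramification index $e\geq 2$), the flat metric $\sqrt{|f^*q|}$ has cone angles $\geq 2\pi$ at all points of $X\setminus P$, hence is locally $\CAT(0)$; its closed horizontal trajectories are therefore length-minimizing in their homotopy classes, and Fubini along the cylinders verifies Beurling's criterion. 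Your proof needs this (or the direct citation to Renelt) to close the lower bound; the equality you assert is a consequence of this step, not a consequence of taking an infimum over lifted curves.
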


Typically, we will take $Q$ to be the set of critical values of $f$ and $P$ to be the empty set, provided that $f^{-1}(Q)$ only consists of critical points.  This is the case if $d=2$ since each point in $Q$ has only one (double) preimage.

\begin{proof}[Proof of \lemref{lem:branch}]
By Jenkins's theorem \cite{Jenkins}, there is a unique integrable holomorphic quadratic differential $q$ on $Y\setminus Q$ whose regular trajectories are all homotopic to $\gamma$ and form a cylinder $C$ of height $1$. With this normalization, the extremal length $\EL(\gamma, Y \setminus Q)$ is equal to the area $\int_{Y \setminus Q} |q|$ of the cylinder.

The pull-back differential $f^* q$ is holomorphic on $X \setminus P$ since simple poles pull-back to regular points or zeros at branch points. Since $f:X \setminus f^{-1}(Q) \to Y \setminus Q$ is a covering map, $f^{-1}(C)$ is a union of cylinders of height $1$, the union of whose core curves is homotopic to $f^{-1}(\gamma)$ relative to $f^{-1}(Q)$, hence relative to $P$ as well. Moreover, $f^{-1}(C)$ contains all the regular horizontal trajectories of $f^* q$.

By Renelt's theorem \cite{Renelt}, the extremal metric for $f^{-1}(\gamma)$ is given by $\sqrt{|f^*q|}$ so that
\[
\EL(f^{-1}(\gamma) , X \setminus P) = \int_{X \setminus P} |f^*q| = d \int_{Y \setminus Q} |q| = d \cdot \EL(\gamma, Y \setminus Q),
\]
as required.
\end{proof}

Note that in the above lemma, the inverse image $f^{-1}(\gamma)$ is not necessarily connected; it may have up to $d$ connected components. It may also happen that some components of $f^{-1}(\gamma)$ are homotopic to each other. In the case where $Y \setminus Q$ is a punctured sphere and $d=2$, the number of components of $f^{-1}(\gamma)$ and whether these components are homotopic to each other is determined by how $\gamma$ separates the punctures. 

\begin{defn}
Let $2\leq m \leq n$ be integers. An \emph{$(m,n)$-curve} on a sphere with $(m+n)$ punctures is a simple closed curve that separates $m$ punctures from $n$ punctures.
\end{defn}

\begin{lem} \label{lem:curves}
Let $f:X \to \CHAT$ be a holomorphic map of degree two from a closed Riemann surface to the Riemann sphere, let $Q \subset \CHAT$ be its set of critical values and let $\gamma \subset \CHAT\setminus Q$ be an $(m,n)$-curve. Then $f^{-1}(\gamma)$ is connected if and only if both $m$ and $n$ are odd, in which case, $f^{-1}(\gamma)$ is separating. If $m$ and $n$ are even, then the two components of $f^{-1}(\gamma)$ are individually non-separating, and they are homotopic to each other if and only if $m=2$.
\end{lem}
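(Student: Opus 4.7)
The plan is to analyze $f^{-1}(\gamma)$ using covering space theory applied to the unbranched restriction $f : X \setminus f^{-1}(Q) \to \CHAT \setminus Q$, which is a genuine degree two cover and hence classified by a surjective homomorphism $\varphi : \pi_1(\CHAT \setminus Q) \to \Z/2\Z$. Since each critical value $q \in Q$ has exactly one (critical) preimage, $\varphi$ must send the small loop around every puncture in $Q$ to the nontrivial element of $\Z/2\Z$.

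For the connectedness claim, $\gamma$ is freely homotopic in $\CHAT \setminus Q$ to the concatenation of the $m$ small loops around the punctures it separates off on one side, so $\varphi([\gamma]) = m \bmod 2$. By the standard correspondence between covers and monodromy, $f^{-1}(\gamma)$ is connected precisely when $\varphi([\gamma])$ is nontrivial, i.e.\ when $m$ is odd. Since $|Q| = 2g+2$ is even by Riemann--Hurwitz, $m+n$ is even, so $m$ is odd if and only if $n$ is odd.

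Next, I write $\CHAT \setminus \gamma = Y_m \sqcup Y_n$, where $Y_k$ is the open disk containing $k$ punctures. The restriction of $\varphi$ to $\pi_1(Y_k)$ is surjective as soon as $k \geq 1$, so each $f^{-1}(Y_k)$ is a connected branched double cover of $Y_k$. Viewing $Y_k$ as a closed disk with $k$ marked branch points, Riemann--Hurwitz yields $\chi(f^{-1}(Y_k)) = 2 - k$, and the number of boundary components of $f^{-1}(Y_k)$ equals the number of lifts of $\gamma$: one if $k$ is odd, two if $k$ is even. When $m$ and $n$ are odd, $X \setminus f^{-1}(\gamma) = f^{-1}(Y_m) \sqcup f^{-1}(Y_n)$ has two components, so the (unique) lift $f^{-1}(\gamma)$ is separating. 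When $m$ and $n$ are even, write $f^{-1}(\gamma) = \tilde\gamma_1 \sqcup \tilde\gamma_2$; removing the single boundary circle $\tilde\gamma_1$ from the connected surface $f^{-1}(Y_k)$ leaves it connected (the other boundary component $\tilde\gamma_2$ is still in its closure), and the two resulting pieces are glued along $\tilde\gamma_2$, showing that $X \setminus \tilde\gamma_1$ is connected and hence $\tilde\gamma_1$ is non-separating.

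For the homotopy statement, I would use the standard fact that two disjoint essential simple closed curves on an orientable surface are homotopic if and only if they cobound an embedded annulus. Any such annulus cobounded by $\tilde\gamma_1$ and $\tilde\gamma_2$ must equal the closure of one of the two components of $X \setminus (\tilde\gamma_1 \cup \tilde\gamma_2)$, namely $f^{-1}(Y_m)$ or $f^{-1}(Y_n)$. Using $\chi(f^{-1}(Y_k)) = 2-k$ with two boundary components, such a piece is an annulus if and only if $k=2$; combined with $m \leq n$, this forces $m=2$. The converse is immediate: when $m=2$, $f^{-1}(Y_m)$ itself is an annulus realizing the homotopy. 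I expect the only delicate step to be upgrading a free homotopy between $\tilde\gamma_1$ and $\tilde\gamma_2$ to an embedded cobounding annulus, but this is a well-known consequence of the bigon criterion for disjoint simple closed curves.
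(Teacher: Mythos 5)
Your proof is correct. The overall structure matches the paper's (Riemann--Hurwitz to compute $\chi(f^{-1}(Y_k))=2-k$; the cobounding-annulus characterization for the homotopy claim), but you establish the central connectedness claim by a different and somewhat more conceptual route: you classify the unbranched double cover by its monodromy $\varphi:\pi_1(\CHAT\setminus Q)\to\Z/2\Z$, note that $\varphi$ is nontrivial on each peripheral loop because branch points have a single preimage, and read off $\varphi([\gamma])=m\bmod 2$. The paper instead applies Riemann--Hurwitz to $f^{-1}(D_m)$ and compares $\chi=2-m$ with the genus--boundary formula $\chi\equiv b\pmod 2$, so the parity of $m$ pins down whether $b=1$ or $b=2$. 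Your monodromy argument has the small advantage of immediately justifying the connectedness of each $f^{-1}(Y_k)$ for $k\geq 1$, which the paper uses implicitly in the first paragraph before stating it explicitly in the second. For the non-separating claim, you show $X\setminus\tilde\gamma_1$ is connected directly (the two pieces $f^{-1}(Y_m)\setminus\tilde\gamma_1$ and $f^{-1}(Y_n)\setminus\tilde\gamma_1$ are each connected and share $\tilde\gamma_2$), whereas the paper constructs a closed curve meeting $\tilde\gamma_1$ once transversely; these are equivalent and equally clean. The final annulus argument is identical.
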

\begin{proof}
Recall that $m+n = |Q| = 2g+2$ where $g$ is the genus of $X$, so that $m$ and $n$ have the same parity. The curve $\gamma$ separates the Riemann sphere into a disk $D_m$ with $m$ critical values and a disk $D_n$ with $n$ critical values. The Riemann--Hurwitz formula gives $\chi(f^{-1}(D_m)) = 2 - m$ and $\chi(f^{-1}(D_n)) = 2 - n$. On the other hand, if $f^{-1}(D_m)$ has genus $g_m$ and $b$ boundary components, then $\chi(f^{-1}(D_m)) = 2 - 2g_m - b$. In particular, $b$ is odd if and only $m$ (and $n$) is. Since $b$ is equal to either $1$ or $2$, we have that $f^{-1}(\gamma)$ is connected if and only if $m$ and $n$ are odd. Since every simple closed curve on the sphere is separating, so is its full preimage under $f$.

Assume that $m$ and $n$ are even. Then $f^{-1}(\gamma)$ has two connected components. Similarly, $X \setminus f^{-1}(\gamma)$ has exactly two connected components, namely $f^{-1}(D_m)$ and $f^{-1}(D_n)$. These surfaces are connected because each one is a branched cover of degree two of a disk with non-trivial branching. In particular, given a point in each component of $f^{-1}(\gamma)$, there is a path in $f^{-1}(D_m)$ between them and another such path in $f^{-1}(D_n)$. The concatenation of these two paths gives a closed curve intersecting each component $C$ of $f^{-1}(\gamma)$ only once and transversely, which implies that $C$ is non-separating.

Suppose that the two components of $f^{-1}(\gamma)$ are homotopic to each other. Then these two components bound a cylinder, necessarily equal to one of $f^{-1}(D_m)$ or $f^{-1}(D_n)$. As the Euler characteristic of a cylinder is equal to $0$, one of $m$ or $n$ must be equal to $2$. Since we assumed that $2\leq m\leq n$, we conclude that $m=2$. Conversely, if $m=2$, then $f^{-1}(D_m)$ must be homeomorphic to a cylinder since it has two boundary components and Euler characteristic zero. This implies that the two components of $f^{-1}(\gamma)$ are homotopic to each other.
\end{proof}

Before stating the consequences of the above lemmas for surfaces of genus two, let us see what they mean for surfaces of genus one. Every torus $X$ is hyperelliptic (or rather \emph{elliptic}). The elliptic involution has $4$ critical points and critical values. Let $f:X \to \CHAT$ be the quotient by the elliptic involution and let $Q$ be its set of critical values. Since any essential simple closed curve $\gamma$ in $\CHAT \setminus Q$ is a $(2,2)$-curve, \lemref{lem:curves} tells us that its preimage $f^{-1}(\gamma)$ has two components, both of which are homotopic to a given curve $\alpha \subset X$. Conversely, every essential simple closed curve $\alpha$ in $X$ can be homotoped off of $P$, after which it projects to some $(2,2)$-curve $\gamma$ in $\CHAT \setminus Q$.

 By \lemref{lem:branch} applied with $P = \varnothing$, we obtain
\[
2^2 \EL( \alpha , X)= \EL(2 \alpha , X) = \EL(f^{-1}(\gamma) , X) = 2 \EL(\gamma, \CHAT \setminus Q),
\]
or $\EL(\gamma, \CHAT \setminus Q)=2\EL( \alpha , X)$. By $2 \alpha$ we mean $2$ copies of the curve $\alpha$, and the first equality holds because $\ell_\rho(2 \alpha) = 2 \ell_\rho(\alpha)$ for every conformal metric $\rho$.

By \thmref{thm:ELsyssimple}, the extremal length systole is always achieved by simple closed curves. We conclude that the extremal length systole of a four-times-punctured sphere is equal to twice the extremal length systole of the elliptic double cover branched over the four punctures. Since the quotient of the regular hexagonal torus by the elliptic involution is isometric to the regular tetrahedron, \corref{cor:torus} implies the following.

\begin{cor} \label{cor:tetrahedron}
The extremal length systole of four-times-punctured spheres attains a unique (strict) local maximum at the regular tetrahedron punctured at its vertices, where it takes the value $4/\sqrt{3}$.
\end{cor}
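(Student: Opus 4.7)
The plan is to transfer Corollary \ref{cor:torus} from tori to four-times-punctured spheres via the elliptic double cover correspondence set up just before the statement. Every four-times-punctured sphere $\CHAT\setminus Q$ is the quotient of a torus $X$ by its elliptic involution $\iota$, with the four punctures corresponding to the four fixed points of $\iota$; conversely, every torus arises this way. Because $\iota$ is canonical, this construction gives a homeomorphism between the moduli space of (marked) four-times-punctured spheres and the moduli space of (marked) tori. Strict local maxima of corresponding functions on the two spaces therefore match.

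The key step is to show that under this correspondence, the function $\ELsys(\CHAT\setminus Q)$ equals $2\,\ELsys(X)$. By \thmref{thm:ELsyssimple}, since $\CHAT\setminus Q$ is not the thrice-punctured sphere, its extremal length systole is achieved by a simple closed curve $\gamma$, which is necessarily a $(2,2)$-curve. Dually, every essential simple closed curve $\alpha$ in $X$ can be homotoped off the four fixed points of $\iota$ (since that set is finite) and then projects to a $(2,2)$-curve in $\CHAT\setminus Q$. The identity
\[
\EL(\gamma,\CHAT\setminus Q)=2\,\EL(\alpha,X)
\]
derived in the paragraph preceding the corollary (via \lemref{lem:branch} and \lemref{lem:curves}) then gives, after taking infima over simple closed curves on each side and invoking \thmref{thm:ELsyssimple} once more on the torus side,
\[
\ELsys(\CHAT\setminus Q)=2\,\ELsys(X).
\]

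Finally, I would invoke \corref{cor:torus}: $\ELsys$ on tori has a unique strict local maximum at the regular hexagonal torus, with value $2/\sqrt{3}$. Under the correspondence, doubling this gives a unique strict local maximum of $\ELsys$ on four-times-punctured spheres with value $4/\sqrt{3}$, achieved at the quotient of the hexagonal torus by $\iota$. It remains to identify this quotient with the regular tetrahedron punctured at its vertices; this is a classical fact, easily verified by noting that the four fixed points of $\iota$ on the hexagonal torus form a set with the symmetries of a regular tetrahedron, and the quotient metric is flat with four cone points of angle $\pi$, which is precisely the double of an equilateral triangle, i.e., the regular tetrahedron.

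The main obstacle, such as it is, lies in being precise about the moduli-space correspondence preserving the notion of strict local maximum. The correspondence is a biholomorphism between the associated Teichmüller spaces (equivalently, any small deformation of the complex structure on $\CHAT\setminus Q$ lifts uniquely to a small deformation of $X$), so local topological statements transfer directly; this is standard and requires no new work beyond what is already in the preceding discussion.
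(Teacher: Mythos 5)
Your proposal follows the same approach as the paper, which packs the substance of this proof into the discussion immediately preceding the corollary: the elliptic double cover $f\colon X\to\CHAT$ matches each essential simple closed curve on a torus with a $(2,2)$-curve on the four-times-punctured quotient sphere so that their extremal lengths differ by a factor of two (via \lemref{lem:branch} and \lemref{lem:curves}), hence the extremal length systoles differ by the same factor, and \corref{cor:torus} then finishes the argument. Your fill-in of the correspondence between (marked) moduli spaces and the transfer of strict local maxima is exactly the implicit reasoning in the paper.

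There is, however, an error in your final sentence. The quotient of the hexagonal torus by the elliptic involution is a flat sphere with four cone points of angle $\pi$, but this surface is not ``the double of an equilateral triangle.'' The double of an equilateral triangle is a flat sphere with \emph{three} cone points of angle $2\pi/3$ --- it is precisely the model for the thrice-punctured sphere used in \exref{ex:isoceles} and \propref{prop:SRsphere}. The regular tetrahedron does indeed have four cone points of angle $\pi$ (three faces of angle $\pi/3$ meet at each vertex), so your conclusion is the correct classical fact, but your justification conflates two genuinely different cone surfaces. A quick correct argument: the four half-lattice fixed points of $z\mapsto -z$ on $\C/(\Z+e^{i\pi/3}\Z)$ are pairwise equidistant, which characterizes the regular tetrahedron among flat spheres with four cone points of angle $\pi$.
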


For surfaces of genus two, it is still true that every homotopy class of simple closed curve is preserved by the hyperelliptic involution \cite{HaasSusskind}. However, the situation is a bit more complicated as there are two types of curves. The extremal length of an essential simple closed curve on a surface of genus two is equal to either twice or half the extremal length of some curve on the sphere punctured at the images of the Weierstrass points, depending on the type of curve.

\begin{prop} \label{prop:genus2}
Let $X$ be closed Riemann surface of genus two, let $f:X \to \CHAT$ be a holomorphic map of degree two, let $Q$ be the set of critical values of $f$, and let $\alpha$ be an essential simple closed curve in $X$. Then $\alpha$ is separating if and only if it is homotopic to $f^{-1}(\beta)$ for some $(3,3)$-curve $\beta$ on $\CHAT \setminus Q$, in which case $\EL(\alpha,X) = 2\EL(\beta,Y)$. Similarly, $\alpha$ is non-separating if and only if it is homotopic to either component of $f^{-1}(\beta)$ for some $(2,4)$-curve $\beta$ on $\CHAT \setminus Q$, in which case $\EL(\alpha,X) = \EL(\beta,Y)/2$.
\end{prop}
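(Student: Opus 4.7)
The plan is to use the hyperelliptic involution $\sigma$ (i.e., the deck transformation of $f$, whose fixed points are the Weierstrass points) to set up a correspondence between homotopy classes of essential simple closed curves on $X$ and essential $(m,n)$-curves on $Y := \CHAT\setminus Q$, and then to invoke \lemref{lem:branch} and \lemref{lem:curves} to compute extremal lengths.

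For the ``if'' direction, note that since $|Q|=2g+2=6$, the only possibilities for an essential $(m,n)$-curve $\beta$ on $Y$ with $2\leq m\leq n$ are $(3,3)$ and $(2,4)$ (a $(1,5)$-curve bounds a once-punctured disk and is inessential, and its preimage on $X$ is null-homotopic). If $\beta$ is a $(3,3)$-curve, \lemref{lem:curves} gives that $\alpha := f^{-1}(\beta)$ is a connected separating simple closed curve, which is essential by a quick Riemann--Hurwitz count (the preimage of either disk cut off by $\beta$ has negative Euler characteristic). Then \lemref{lem:branch} directly yields $\EL(\alpha,X)=2\EL(\beta,Y)$. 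If $\beta$ is a $(2,4)$-curve, \lemref{lem:curves} gives two homotopic non-separating components; let $\alpha$ denote their common homotopy class. Then $f^{-1}(\beta)$ realizes the multicurve $2\alpha$, so $\ell_\rho(f^{-1}(\beta))=2\ell_\rho(\alpha)$ for every conformal metric $\rho$, hence $\EL(f^{-1}(\beta),X)=4\EL(\alpha,X)$. Combining with \lemref{lem:branch} gives $\EL(\alpha,X) = \EL(\beta,Y)/2$.

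For the ``only if'' direction, take $\alpha$ to be the unique hyperbolic geodesic in its free homotopy class. The involution $\sigma$ is conformal, hence an isometry of the hyperbolic metric on $X$, and it preserves the homotopy class $[\alpha]$ by the theorem of Haas and Susskind \cite{HaasSusskind}; by uniqueness of geodesics in free homotopy classes, $\sigma$ therefore maps $\alpha$ to itself set-wise. The restriction $\sigma|_\alpha$ is an involution of a circle that is not the identity (the fixed set of $\sigma$ is finite), so it is either a free rotation or a reflection with exactly two fixed points. In the free case, $f|_\alpha$ is a $2$-to-$1$ covering onto a simple closed curve $\beta\subset Y$ with $f^{-1}(\beta)=\alpha$ connected; by essentiality and \lemref{lem:curves}, $\beta$ must be a $(3,3)$-curve and $\alpha$ is separating. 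In the reflection case, the two fixed points of $\sigma|_\alpha$ are Weierstrass points, so $f(\alpha)$ is an embedded arc in $\CHAT$ with endpoints at two critical values of $f$; the boundary $\beta$ of a thin regular neighborhood of this arc is a $(2,4)$-curve in $Y$, and the preimage under $f$ of that neighborhood is a tubular neighborhood of $\alpha$, so the two components of $f^{-1}(\beta)$ are parallel copies of $\alpha$. Thus $\alpha$ is non-separating and homotopic to a component of $f^{-1}(\beta)$, as required.

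The main obstacle I anticipate is the invariant-representative step: while \cite{HaasSusskind} gives $\sigma$-invariance of $[\alpha]$, one needs an actually $\sigma$-invariant representative to perform the case analysis. Using the unique hyperbolic geodesic representative sidesteps this cleanly, since $\sigma$ acts as an isometry of the (uniformizing) hyperbolic metric. Once an invariant representative is in hand, the rest of the proof is a direct classification of the action of an order-two isometry on a circle, matched against the two possibilities provided by \lemref{lem:curves}.
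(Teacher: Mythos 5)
Your proof is correct and follows essentially the same route as the paper: both use the hyperelliptic involution, the uniqueness of the geodesic representative to obtain an invariant curve, the classification of an order-two isometry of a circle as free or a reflection with two fixed points (necessarily Weierstrass points), Riemann--Hurwitz, and Lemmas~\ref{lem:branch} and \ref{lem:curves}. The only difference is organizational: you first classify the action of the involution on the geodesic and then deduce the separating/non-separating dichotomy via Lemma~\ref{lem:curves}, whereas the paper starts from the dichotomy and derives the corresponding action; this makes your argument a bit more self-contained at the step where the paper asserts (without proof) that $J$ preserves the orientation of $\alpha^*$ in the separating case.
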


\begin{proof}
Let $J : X \to X$ be the hyperelliptic involution and let $\alpha^*$ be the geodesic representative of $\alpha$ with respect to the hyperbolic metric on $X$. 

If $\alpha$ is separating, then each component of $X \setminus \alpha^*$ is a one-holed torus preserved by $J$, because $J$ preserves $\alpha^*$ together with its orientation. If $C$ is one such component, then $f(C)$ is a disk and the Riemann-Hurwitz formula tells us that
\[
-1 = \chi(C) = 2 - |f^{-1}(Q) \cap C|
\]
so that $C$ contains $3$ critical points of $f$ and hence $f(C)$ contains $3$ critical values. This shows that $f(\alpha^*)$ is a $(3,3)$-curve. More precisely, $f(\alpha^*)=\beta^2$ for some $(3,3)$-curve $\beta$ since $\alpha^*$ covers its image by degree two. As $\alpha$ is homotopic to $\alpha^*$, we have
\[
\EL(\alpha,X)=\EL(\alpha^*,X)=\EL(f^{-1}(\beta),X)= 2\EL(\beta, \CHAT \setminus Q)
\]
according to \lemref{lem:branch}. 

If $\alpha$ is non-separating, then the isometry $J$ sends $\alpha^*$ to itself in an orientation-reversing manner, so that $\alpha^*$ passes through two Weierstrass points. Let $\eps>0$ be small enough so that the $\eps$-neighborhood of $\alpha^*$ in $X$ is an annulus $A$ that contains only these two Weierstrass points. Then $J$ maps $A$ to itself and exchanges its two boundary components. Let $\beta$ be the image of either boundary component by $f$. Since $f(A)=A / J$ is a disk containing two critical values, $\beta$ is a $(2,4)$-curve on $\CHAT\setminus Q$. Furthermore, $f^{-1}(\beta)=\partial A$ is a union of two curves homotopic to $\alpha$. By \lemref{lem:branch}, we have
\[
2^2 \EL(\alpha, X)=\EL(2 \alpha, X) = \EL(f^{-1}(\beta), X) = 2 \EL(\beta, \CHAT \setminus Q).
\]

The converse statements follow from \lemref{lem:curves}, which tells us that the preimage of a $(3,3)$-curve is connected and separating, while the preimage of a $(2,4)$-curve has two homotopic non-separating components.
\end{proof}

It is perhaps more intuitive to think in terms of embedded cylinders. The inverse image of a $(3,3)$-cylinder in $\CHAT \setminus Q$ has twice the circumference and the same height, while the inverse image of a $(2,4)$-cylinder $C$ consists in two parallel copies of $C$, so the circumference stays the same but the total height is multiplied by two. 
\section{From the octahedron to pillowcases} \label{sec:explicit}

The Bolza surface $\calB$ can be defined as the one-point compactification of the algebraic curve
\[
\left\{(x,y)\in \C^2 : y^2 = x(x^4-1)\right\}.
\]
In these coordinates, the hyperelliptic involution takes the form $(x,y)\mapsto (x,-y)$ and the corresponding quotient map is realized by the projection $(x,y) \mapsto x$, which has critical values $\{0, \pm1, \pm i, \infty\}$. By \propref{prop:genus2}, calculating extremal lengths on $\calB$ is equivalent to calculating extremal lengths on $\calO := \CHAT \setminus \{0, \pm1, \pm i, \infty\}$, where $\CHAT$ is the Riemann sphere. This surface is conformally equivalent to the unit sphere $S^2$ in $\R^3$ punctured where the coordinate axes intersect it. Under the stereographic projection, the three great circles obtained by intersecting a coordinate plane in $\R^3$ with $S^2$ map to the two coordinate axes and the unit circle in $\C$. We will refer to the vertices, edges, and faces of this cell division below.

For certain simple closed curves in $\calO$, we are able to explicitly compute their extremal length. We do this by finding branched covers from $\calO$ to four-times-punctured spheres, where extremal length is calculated using elliptic integrals. We will then prove lower bounds for the extremal length of other curves in the next section by using the Euclidean metric on the regular octahedron.

\subsection{The curves}

We distinguish four kinds of curves in $\calO$:
\begin{itemize}
\item A \emph{baseball curve} is a simple closed curve that separates a pair of consecutive edges (adjacent edges that do not belong to a common face) from another such pair.  There are six baseball curves. 

\item An \emph{edge curve} is a simple closed curve that separates an edge from the five edges that are not adjacent to it. There are twelve edge curves, one for each edge.
    
\item An \emph{altitude curve} is a simple closed curve that surrounds a pair of altitudes sharing a common foot. There are twelve altitude curves, one dual to each edge.
    
\item A \emph{face curve} is a simple closed curve that separates two opposite faces. There are four face curves, one for each pair of opposite faces.
\end{itemize}

An example of each is depicted in \figref{fig:curves}. The baseball curves are named like so because of the resemblance with the stitching pattern of a baseball. From this point onward, we will confound closed curves with their homotopy classes.

\begin{figure}[htp]
     \centering
     \begin{subfigure}[b]{0.24\textwidth}
         \centering
         \includegraphics[width=\textwidth]{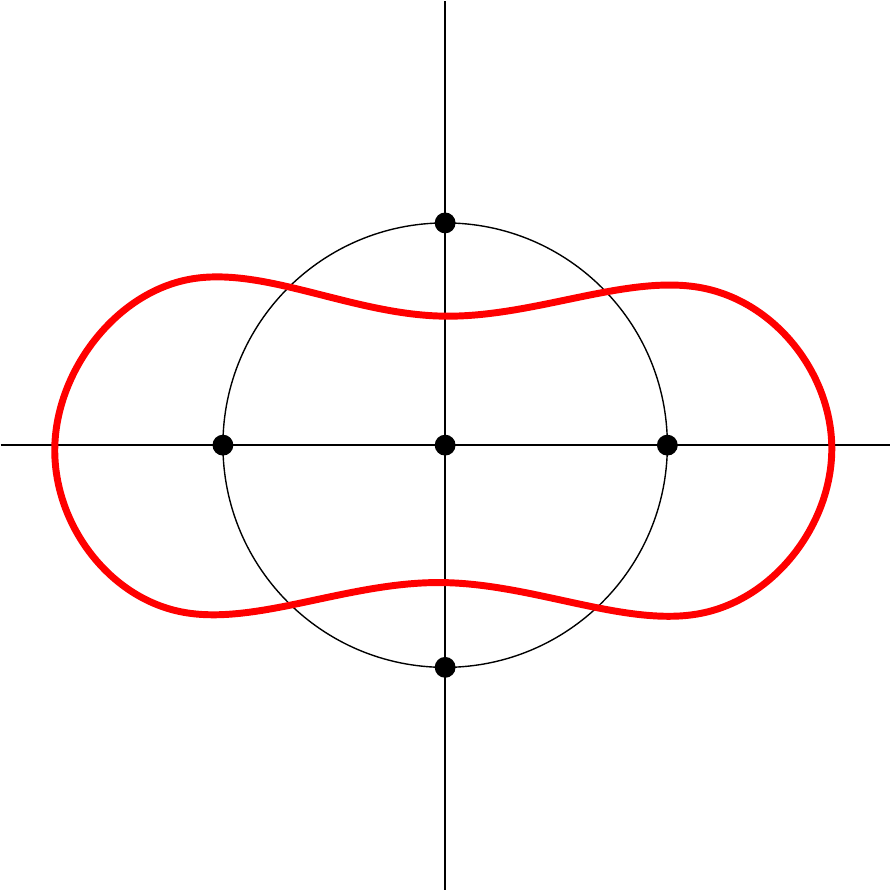}
         \caption{A baseball curve}
         \label{fig:baseball}
     \end{subfigure}
     \begin{subfigure}[b]{0.24\textwidth}
         \centering
         \includegraphics[width=\textwidth]{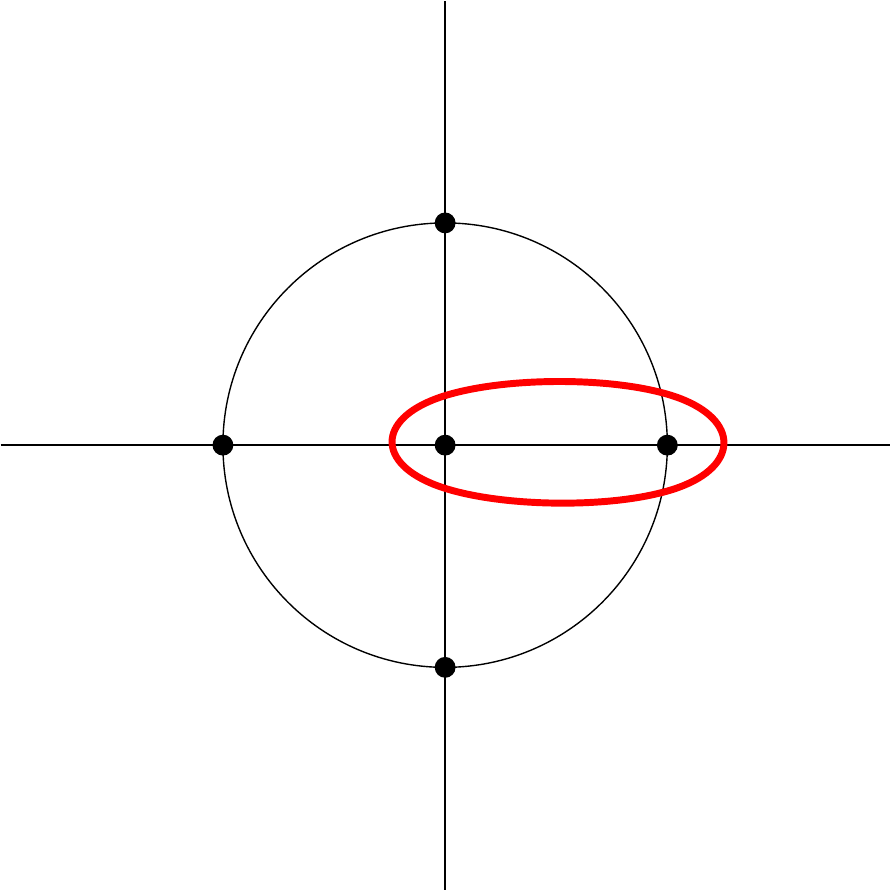}
         \caption{An edge curve}
         \label{fig:edge}
     \end{subfigure}
     \begin{subfigure}[b]{0.24\textwidth}
         \centering
         \includegraphics[width=\textwidth]{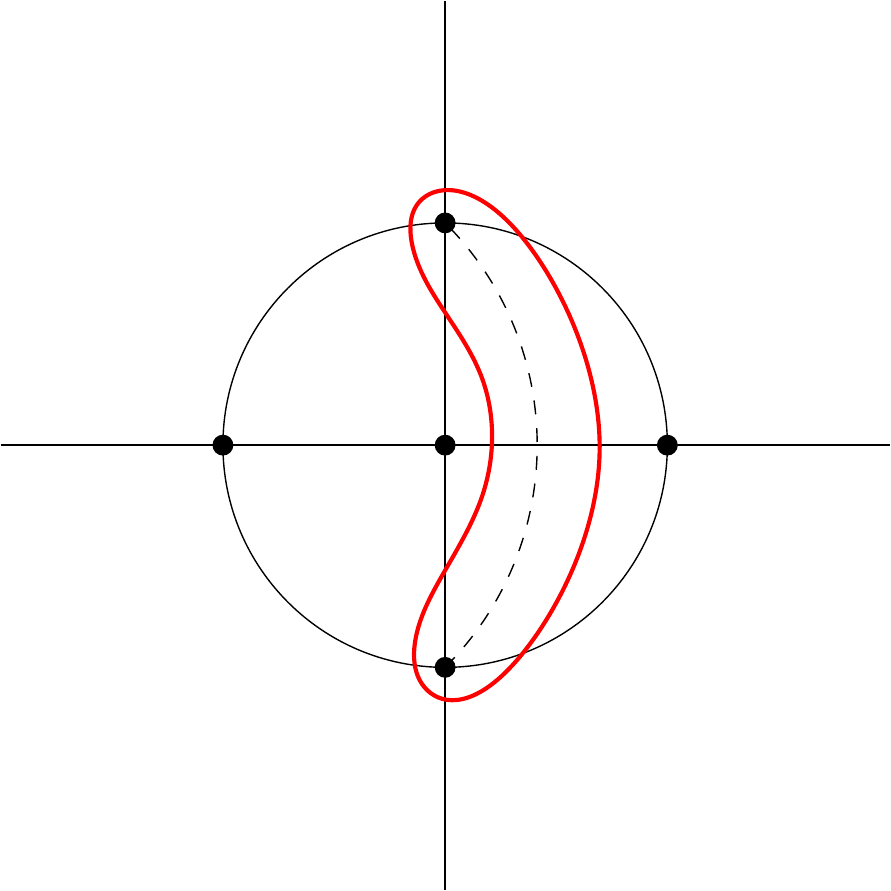}
         \caption{An altitude curve}
         \label{fig:altitude}
     \end{subfigure}
     \begin{subfigure}[b]{0.24\textwidth}
         \centering
         \includegraphics[width=\textwidth]{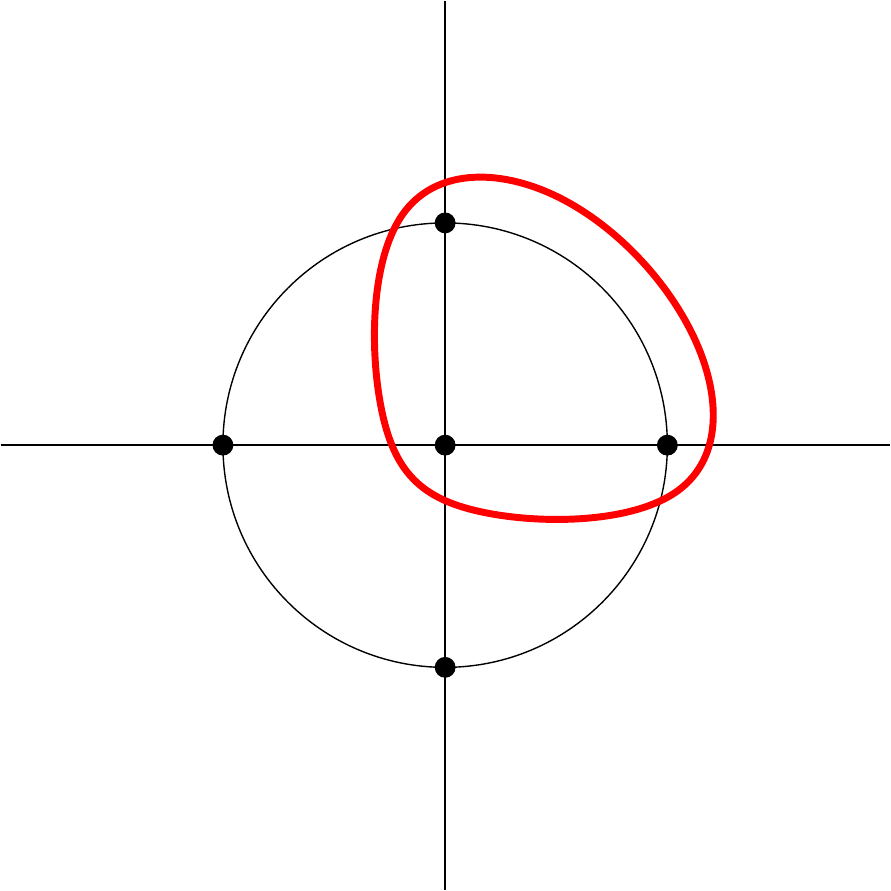}
         \caption{A face curve}
         \label{fig:face}
     \end{subfigure}

     \caption{Some curves on the punctured octahedron}
     \label{fig:curves}
\end{figure}

For any two curves of one kind, there is a conformal automorphism of $\calO$ sending one to the other. Thus, all curves of a given kind have the same extremal length. In each case, there is also a non-trivial conformal automorphism $h$ that preserves the curve. If we quotient $\calO$ by the group generated by $h$, we obtain a holomorphic branched cover $f$ onto the Riemann sphere. By construction, $f$ sends the curve we started with to a power of a simple closed curve in $\CHAT$ punctured at the critical values of $f$ and the images of the vertices of $\calO$.

We find the holomorphic map $f$ for each kind of curve in the next subsections, and use this to compute their extremal length. 

\subsection{The baseball curves}

We begin with the baseball curves, as this case is the simplest. Strictly speaking, we will not need this calculation to determine the extremal length systole of $\calO$ or $\calB$. We only use it to illustrate the method outlined above.

\begin{prop} \label{prop:baseball}
The extremal length of any baseball curve in $\calO$ is equal to $4$.
\end{prop}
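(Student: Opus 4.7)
My plan is to exploit the conformal symmetry of a baseball curve to reduce to an elementary computation on a $4$-punctured sphere. Place $\calO$ stereographically as $\CHAT\setminus\{0,\pm 1,\pm i,\infty\}$. The rotation by $\pi$ of the octahedron about the axis through the two opposite vertices projecting to $\pm i$ is the Möbius involution $h(z)=-1/z$, which fixes only $\pm i$, swaps $\{1,-1\}$, and swaps $\{0,\infty\}$. Since all baseball curves are conformally equivalent, it suffices to compute the extremal length of one that is $h$-invariant, say the curve $\gamma$ separating $\{1,-1,i\}$ from $\{-i,0,\infty\}$. Because the fixed points of $h$ are punctures of $\calO$, $h|_\gamma$ is a fixed-point-free involution of the circle.

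I would then pass to the quotient $f\colon\calO\to P:=\calO/\langle h\rangle$, realised explicitly by the $h$-invariant degree-two rational function $f(z)=z-1/z$. Its critical values are $\pm 2i=f(\pm i)$, and the other punctures of $\calO$ descend via $f(\pm 1)=0$ and $f(0)=f(\infty)=\infty$, so $P=\CHAT\setminus\{0,\pm 2i,\infty\}$. Setting $\beta:=f(\gamma)$, the freeness of $h|_\gamma$ means $f^{-1}(\beta)=\gamma$ is a single curve double-covering $\beta$, and \lemref{lem:branch} then gives
\[
\EL(\gamma,\calO)\;=\;\EL(f^{-1}(\beta),\calO)\;=\;2\,\EL(\beta,P).
\]
Moreover, $\beta$ is a $(2,2)$-curve on $P$ separating $\{0,2i\}$ from $\{-2i,\infty\}$.

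The core of the argument is to show $\EL(\beta,P)=2$, which I would do by recognising $P$ as the \emph{square} pillowcase. The Möbius transformation $M(z)=(-2iz+4)/(z-2i)$ cycles the four punctures as $0\mapsto 2i\mapsto\infty\mapsto -2i\mapsto 0$, exhibiting an order-four automorphism of $P$ and thereby identifying it with the square pillowcase; equivalently, the cross-ratio of the punctures is $\tfrac12$, whose Möbius orbit $\{-1,\tfrac12,2\}$ is the cross-ratio signature of a square configuration. Realising $P$ as the double of a unit square with cone points at the four corners, the curve $\beta$ becomes the equator of the pillowcase, and the extremal metric for $\beta$ is the flat metric itself (by \exref{ex:simple}), presenting $P$ as a Euclidean cylinder of circumference $2$ and height $1$; hence $\EL(\beta,P)=4/2=2$. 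Combining with the identity above yields $\EL(\gamma,\calO)=4$. The only non-routine step I anticipate is the identification of $P$ as the square pillowcase; everything else is formal manipulation of Möbius transformations together with an application of the branched-cover formula.
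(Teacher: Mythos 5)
Your proof is correct and follows essentially the same strategy as the paper: exploit the order-two rotational symmetry of a baseball curve to pass to a degree-two quotient onto the square pillowcase, then apply \lemref{lem:branch} and the standard cylinder computation. The only difference is cosmetic---you rotate about the vertex axis through $\pm i$ (quotient map $z-1/z$, target $\CHAT\setminus\{0,\pm 2i,\infty\}$), whereas the paper rotates about the axis through $0$ and $\infty$ (quotient map $z^2$, target $\CHAT\setminus\{-1,0,1,\infty\}$); both are conjugate choices and your explicit order-four M\"obius automorphism is a clean way to certify the square pillowcase.
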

\begin{proof}
The baseball curve $\alpha$ depicted in \figref{fig:baseball} is invariant by the rotation $z \mapsto -z$. To quotient by this involution, we apply the squaring map $f(z)=z^2$. This defines covering map $f: \calO \to \CHAT \setminus \{-1,0,1,\infty\}$ of degree two that sends $\gamma$ to $\beta^2$, where $\beta$ is a simple closed curve separating $[0,1]$ from $[-\infty,-1]$.

It is easy to see that $\CHAT \setminus \{-1,0,1,\infty\}$ is conformally equivalent to a square pillowcase, that is, to the double of a Euclidean square punctured at the vertices. Indeed, the closed upper half-plane with vertices at $\{-1,0,1,\infty\}$ is conformally equivalent to a square because it has four-fold symmetry with respect to the point $i$ (the conformal map is given by the Schwarz--Christoffel formula). Doubling across the boundary gives the desired result.

Let $\rho$ be the Euclidean metric on $W=\CHAT \setminus \{-1,0,1,\infty\}$ coming from the square pillowcase of side length $1$. Then $\rho$ is extremal for $\beta$. Indeed, the closed geodesics that go across the squares parallel to the sides have minimal length in their homotopy class because the metric is locally $\CAT(0)$ (this can also be shown using \propref{prop:tight}). The double square is clearly swept out evenly by these closed geodesics homotopic to $\beta$ (this is just Fubini integration on each square). By Beurling's criterion, $\rho$ is extremal, so that
\[
\EL(\beta,W) = \frac{\ell_\rho(\beta)^2}{\area_\rho(W)} = \frac{2^2}{2} = 2
\]
and hence $\EL(\alpha,\calO) = \EL(f^{-1}(\beta),\calO) = 2 \EL(\beta,W) = 4$ by \lemref{lem:branch}.
\end{proof}

Here we got lucky because $\CHAT \setminus \{-1,0,1,\infty\}$ is particularly symmetric. For the other kinds of curves, we will follow a similar approach of finding branched covers onto four-times-punctured spheres, but these will be rectangular rather than square. Their flat metric can be calculated using elliptic integrals, which we briefly discuss now.

\subsection{Elliptic integrals} \label{subsec:elliptic}

For $k \in (0,1)$, the \emph{complete elliptic integral of the first kind} is defined as
\[
K(k) = \int_0^1 \frac{dt}{\sqrt{(1-t^2)(1-k^2 t^2)}}.
\]
The variable $k$ is called the \emph{modulus}. The \emph{complementary modulus} is $k' = \sqrt{1-k^2}$ and the \emph{complementary integral} is $K'(k):=K(k')$. This terminology can be explained by \eqnref{eq:complement}, taken from \cite[p.501]{WW}, in the proof below.

\begin{lem} \label{lem:elliptic}
For any $k \in (0,1)$, the extremal length of the simple closed curve separating the interval $(-1,1)$ from $\pm 1/k$ in $\CHAT \setminus \{ \pm 1, \pm 1/k \}$ is equal to $4K(k)/K'(k)$.
\end{lem}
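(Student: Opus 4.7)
The plan is to reduce the extremal length computation to an application of Beurling's criterion on a rectangular pillowcase, where the rectangle's dimensions are given by the elliptic integrals $K$ and $K'$.

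First, I would use the Schwarz--Christoffel map
\[
\phi(z) = \int_0^z \frac{dt}{\sqrt{(1-t^2)(1-k^2 t^2)}}
\]
to conformally identify the upper half-plane with the rectangle $R = [-K, K] \times [0, K']$, where $K = K(k)$ and $K' = K'(k)$. The four punctures on the real axis get sent to the four corners of $R$: specifically, $\pm 1 \mapsto (\pm K, 0)$ and $\pm 1/k \mapsto (\pm K, K')$, with $\infty$ mapping to the midpoint of the top edge. By the Schwarz reflection principle, doubling the upper half-plane across the real axis corresponds to doubling $R$ across its boundary, yielding a conformal equivalence
\[
\CHAT \setminus \{\pm 1, \pm 1/k\} \;\longrightarrow\; P,
\]
where $P$ is the flat pillowcase made from two copies of $R$ glued along their boundary, with total area $4KK'$.

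Next I would identify the curve $\beta$ in the pillowcase picture. Since $\beta$ separates $\{\pm 1\}$ from $\{\pm 1/k\}$, it can be chosen to cross the real axis exactly at one point in $(-1/k,-1)$ and one in $(1,1/k)$. Under $\phi$, these are points on the left and right vertical edges of $R$ respectively, so $\beta$ is homotopic in $P$ to a horizontal loop that crosses each face once; its length-minimizing representative is a horizontal closed geodesic of length $2 \cdot 2K = 4K$.

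Finally, I would apply Beurling's criterion to the flat metric $\rho$ on $P$. The family $\Gamma_0$ of horizontal closed geodesics foliates $P$ away from two arcs joining opposite corners (a set of measure zero), so Fubini's theorem yields the sweep-out identity \eqref{eq:fubini2} with the measure $dy$ on the family, exactly as in the baseball curve and double-triangle examples treated earlier. Each such geodesic minimizes length in its homotopy class (it does not pass through any cone point and has constant horizontal slope). Beurling's criterion therefore gives
\[
\EL(\beta) \;=\; \frac{\ell_\rho(\beta)^2}{\area_\rho(P)} \;=\; \frac{(4K)^2}{4KK'} \;=\; \frac{4K(k)}{K'(k)}.
\]

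The only genuinely delicate step is checking that the geodesic representative of $\beta$ really is a horizontal loop of length $4K$ rather than some shorter diagonal loop using the pillowcase's other directions; this is what fixes the combinatorics of which boundary edges of $R$ get identified and guarantees that the two horizontal crossings of the two faces concatenate into a closed curve homotopic to $\beta$. Everything else is essentially the length-area method already illustrated in \propref{prop:baseball}.
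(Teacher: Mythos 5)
Your proof is correct and follows essentially the same route as the paper: map the upper half-plane to a rectangle by the Schwarz--Christoffel integral, read off the width $2K(k)$ and height $K'(k)$, double to a pillowcase, observe that the curve becomes a horizontal loop of length $4K(k)$, and apply Beurling's criterion. The only cosmetic difference is that the paper verifies $\int_1^{1/k} dt/\sqrt{(t^2-1)(1-k^2t^2)} = K'(k)$ by the explicit substitution $t = \sqrt{1-(k')^2 s^2}/k$, whereas you quote this as standard; and your ``genuinely delicate step'' at the end is in fact routine, since horizontal trajectories of the associated Jenkins--Strebel differential automatically minimize length in their homotopy class.
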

\begin{proof}
The Schwarz-Christoffel transformation
\[
\zeta \mapsto \int_{\zeta_0}^\zeta \frac{dz}{\sqrt{(1-z^2)(1-k^2z^2)}}
\]
sends the closed upper half-plane to a rectangle $R(k)$ with sides parallel to the coordinate axes, maps $\{\pm 1, \pm 1/k\}$ to the vertices, and is conformal in the interior \cite[p.238--240]{AhlforsComplex}.

The width of $R(k)$ is equal to
\[
W(k) = \int_{-1}^1 \frac{dt}{\sqrt{(1-t^2)(1-k^2 t^2)}} = 2\int_{0}^1 \frac{dt}{\sqrt{(1-t^2)(1-k^2 t^2)}} = 2 K(k)
\]
and its height is equal to
\[
H(k) = \int_{1}^{1/k} \frac{dt}{\sqrt{(t^2-1)(1-k^2 t^2)}}.
\]
The change of variable $t = \sqrt{1-(k')^2s^2}/k$ or equivalently $s= \sqrt{1-k^2t^2}/k'$ shows that
\begin{equation} \label{eq:complement}
\int_{1}^{1/k} \frac{dt}{\sqrt{(t^2-1)(1-k^2 t^2)}} = \int_0^1 \frac{ds}{\sqrt{(1-s^2)(1-(k')^2s^2)}} = K(k')=K'(k).
\end{equation}

Let $P(k)$ be the pillowcase obtained by doubling $R(k)$ across its boundary and puncturing at the vertices. Then $P(k)$ is foliated by closed horizontal geodesics of length $2W(k)=4K(k)$ and its height is equal to $H(k)=K'(k)$. By Beurling's criterion (or \exref{ex:simple}), the flat metric on $P(k)$ is extremal for the homotopy class of these curves. The extremal length is therefore equal to $2W(k)/H(k) = 4 K(k)/K'(k)$.
\end{proof}

Note that the elliptic double cover of $\CHAT \setminus \{ \pm 1, \pm 1/k \}$ is a torus with periods $4K(k)$ and $i 2K'(k)$. For this reason, the integrals $K(k)$ and $K'(k)$ are often called quarter- or half-periods.

Elliptic integrals satisfy several identities that can be used to compute them efficiently. We will require the \emph{upward Landen transformation}
\begin{equation} \label{eq:upward}
K(k) = \frac{1}{1+k} \, K \left( \frac{2\sqrt{k}}{1+k}\right)
\end{equation}
 and the \emph{downward Landen transformation}
\begin{equation} \label{eq:downward}
K'(k) = \frac{2}{1+k} \, K \left( \frac{1-k}{1+k}\right),
\end{equation}
which are valid for every $k \in (0,1)$ \cite[Theorem 1.2]{AGM}. If we define $k^* := 2 \sqrt{k} / (1+k)$, then it is elementary to check that $(k^*)' = (1-k)/(1+k)$. Upon dividing the two Landen transformations, we thus obtain the \emph{multiplication rule}
\begin{equation} \label{eq:multiplication}
    \frac{K(k)}{K'(k)} = \frac12 \, \frac{K(k^*)}{K'(k^*)},
\end{equation}
which is actually what we are going to use.

\subsection{The edge curves} \label{subsec:edge}

We are now able to compute the extremal length of the edge curves.

\begin{prop} \label{prop:roottwo}
The extremal length of any edge curve in $\calO$ is equal to $2\sqrt{2}$.
\end{prop}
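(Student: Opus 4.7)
The plan is to reduce the extremal length of the edge curve to a computation on a four-punctured sphere via a well-chosen holomorphic involution of $\calO$, then to evaluate the resulting elliptic integral using the Landen transformations. By the transitive action of $\Aut(\calO)$ on edges, I may take $c_1$ to be the edge curve around the edge from $0$ to $1$. The Möbius transformation $\phi(z) = (1-z)/(z+1)$ is a holomorphic involution of $\calO$: it permutes $P := \{0, \pm 1, \pm i, \infty\}$ via the transpositions $(0\,1)$, $(-1\,\infty)$, and $(i\,-i)$, so it preserves $c_1$ setwise. Its fixed points $-1 \pm \sqrt{2}$ are the midpoints (in $\CHAT$) of the two edges preserved by $\phi$, and neither lies on $c_1$, so $\phi$ acts on $c_1$ by rotation by $\pi$. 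A $\phi$-invariant rational function of degree two is $f(z) = (z^2+1)/(z+1)$: it collapses the three $\phi$-orbits in $P$ to $\{1\}$, $\{\infty\}$, $\{0\}$, and sends the fixed points $-1 + \sqrt{2}$ and $-1 - \sqrt{2}$ to the critical values $2(\sqrt{2}-1)$ and $-2(\sqrt{2}+1)$ respectively. The image $\bar c := f(c_1)$ separates $\{1, 2(\sqrt{2}-1)\}$ from $\{0, \infty\}$ on the sphere, and since $\phi$ acts freely on $c_1$ we have $f^{-1}(\bar c) = c_1$ with $f|_{c_1}$ of degree two.

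The key step is to exhibit the extremal quadratic differential. A direct computation (using $\phi^*(x+1+\sqrt{2}) = \sqrt{2}(x+1+\sqrt{2})/(x+1)$) shows that
\[
q = \frac{(x+1+\sqrt{2})^2}{x(x^4-1)}\, dx^2
\]
is $\phi$-invariant on $\calO$, so $q = f^*\tilde q$ for a unique quadratic differential $\tilde q$ on $\CHAT$. A divisor count confirms that $q$ has simple poles at the six vertices in $P$ and a single double zero at $-1-\sqrt{2}$, and correspondingly $\tilde q$ has simple poles only at the four points $\{0, 1, 2(\sqrt{2}-1), \infty\}$ (the double zero of $q$ at the critical point of $f$ pushes forward to a regular point of $\tilde q$). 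By Jenkins's theorem, once one checks that the horizontal foliation of $\tilde q$ is a single cylinder homotopic to $\bar c$, $\tilde q$ is the extremal differential on $\calP := \CHAT \setminus \{0, 1, 2(\sqrt{2}-1), \infty\}$, and the change-of-variables formula yields $\EL(c_1, \calO) = \int_{\calO} |q| = 2\int_{\calP} |\tilde q| = 2\, \EL(\bar c, \calP)$.

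Matching cross-ratios identifies $\calP$ conformally with a standard pillowcase $\CHAT \setminus \{\pm 1, \pm 1/k\}$ in which $\bar c$ is the curve of \lemref{lem:elliptic}; the resulting modulus satisfies $((k-1)/(k+1))^2 = 2(\sqrt{2}-1)$, equivalently $k^* = \sqrt{2}-1$, where $k^* := 2\sqrt{k}/(1+k)$. By \lemref{lem:elliptic} and the multiplication rule \eqnref{eq:multiplication}, $\EL(\bar c, \calP) = 4\, K(k)/K'(k) = 2\, K(k^*)/K'(k^*)$. At $k^* = \sqrt{2}-1$ one has the algebraic identity $(1-k^*)/(1+k^*) = k^*$ (since $(2-\sqrt{2})/\sqrt{2} = \sqrt{2}-1$), so the downward Landen transformation \eqnref{eq:downward} gives $K'(k^*) = \tfrac{2}{1+k^*}K(k^*) = \sqrt{2}\,K(k^*)$, i.e.\ $K(k^*)/K'(k^*) = 1/\sqrt{2}$. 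Hence $\EL(\bar c, \calP) = \sqrt{2}$ and $\EL(c_1, \calO) = 2\sqrt{2}$, as claimed.

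The main obstacle is the horizontal-foliation analysis: proving that $\tilde q$ (equivalently $q$) is a Jenkins--Strebel differential with a single maximal cylinder homotopic to $\bar c$ (resp.\ $c_1$), rather than several cylinders or a foliation with saddle connections of positive total measure. The four horizontal separatrices emanating from the double zero of $q$ at $-1-\sqrt{2}$ must be identified with explicit saddle connections on $\calO$, using the $\phi$-symmetry (which acts by rotation by $\pi$ on the tangent space at the zero and thereby pairs up the separatrices) to pin down the combinatorial structure of the critical graph.
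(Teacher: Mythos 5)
Your proposal follows the same overall strategy as the paper's: quotient $\calO$ by an involution preserving the edge curve, identify the quotient (minus a superfluous puncture) with a rectangular pillowcase, and evaluate the resulting ratio of elliptic integrals with the Landen transformations. You work directly with the involution $\phi(z)=(1-z)/(1+z)$ and the rational quotient $f(z)=(z^2+1)/(z+1)$, while the paper first conjugates this symmetry to $z\mapsto-z$ via a M\"obius map and then squares; these are equivalent, as is your single application of the downward Landen transformation versus the paper's repeated use of the multiplication rule. The main structural difference is that you build the extremal differential $q$ and its pushforward $\tilde q$ explicitly and then invoke Jenkins's uniqueness, whereas the paper simply cites \lemref{lem:branch} and observes that the superfluous critical-value puncture lies on a singular horizontal trajectory of the extremal differential on the four-punctured sphere---this is shorter and sidesteps the foliation verification you flag as the ``main obstacle''. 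That verification can indeed be completed as you sketch; it is essentially the content of the paper's discussion following the proposition, where the $\phi$- and conjugation-symmetries plus a sign check pin down the critical graph of $q$. Two small corrections: your $q$ carries the wrong sign---it should be $\frac{(x+1+\sqrt2)^2}{x(1-x^4)}\,dx^2$, so that $(0,1)$ and $(-\infty,-1]$ are horizontal rather than vertical trajectories---and the identity $\EL(c_1,\calO)=\int_{\calO}|q|$ tacitly assumes the height-one normalization of the associated cylinder.
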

\begin{proof}
We first apply a ``rotation'' of angle $-\pi/4$ around the points $\pm i$ to better display the symmetries of the edge curve in \figref{fig:edge}. This is done with the M\"obius transformation
\[
M(z) = \frac{\cos(\pi/8)z-\sin(\pi/8)}{\sin(\pi/8)z +\cos(\pi/8)}= \frac{\sqrt{2+\sqrt{2}} z  - \sqrt{2-\sqrt{2}}}{\sqrt{2-\sqrt{2}} z + \sqrt{2+\sqrt{2}}},
\]
which fixes $\pm i$ and sends $-1$, $0$, $1$, and $\infty$ to
\[
-(\sqrt{2}+1), \quad - (\sqrt{2}-1), \quad \sqrt{2}-1, \quad \text{and} \quad \sqrt{2}+1
\]
respectively. The transformation $M$ also sends the edge curve surrounding $[0,1]$ to a curve $\alpha$ surrounding the interval $[- (\sqrt{2}-1), \sqrt{2}-1]$ in $Z:=\CHAT \setminus\{\pm i,\pm(\sqrt{2}-1), \pm (\sqrt{2}+1) \}$.

Now that everything is symmetric about the origin, we apply the squaring map $f(z) = z^2$. This sends the punctures to $-1$, $3-2\sqrt{2}$ and $3+2\sqrt{2}$, and has critical values at $0$ and $\infty$. Moreover, it maps $\alpha$ to $\beta^2$ where $\beta$ is a curve surrounding the interval $[0,3-2\sqrt{2}]$.

We will compute the extremal length of $\beta$ in $W:= \CHAT \setminus \{ -1 , 0, 3-2\sqrt{2}, 3+2\sqrt{2} \}$. This turns out to be the same as the extremal length of $\beta$ in $W \setminus \{\infty\}$. Indeed, the quadratic differential $q$ realizing the extremal length of $\beta$ in $W$ has two singular horizontal trajectories: the interval $(0,3-2\sqrt{2})$ and $(-\infty,-1)\cup \{\infty\} \cup(3+2\sqrt{2},+\infty)$. So the regular horizontal trajectories of $q$ are homotopic to $\beta$ whether we puncture at $\infty$ or not.

To express the extremal length of $\beta$ as a ratio of elliptic integrals, we first map the punctures $\{-1,0,3-2\sqrt{2},3+2\sqrt{2}\}$ to $\{\pm 1, \pm{1/k}\}$ for some $k \in (0,1)$ via a M\"obius transformation $T$. We begin by applying $z \mapsto 1/z$ to get the points $\{ -1,3-2\sqrt{2},3+2\sqrt{2} , \infty \} $. We then translate by $1$ and scale by $1/(4-2\sqrt{2})$ to end up with
\[
\left\{ 0,1,\frac{4+2\sqrt{2}}{4-2\sqrt{2}}=(\sqrt{2}+1)^2, \infty \right\}.
\]
After these transformations, the curve $\beta$ separates $[0,1]$ from the other two punctures.

The M\"obius transformation $g$ sending $-1$, $1$, and $-1/k$ to $0$, $1$ and $\infty$ respectively is given by
\[
g(z) = \frac{k+1}{2}\left(\frac{z+1}{kz+1}\right).
\]
For it to send $1/k$ to $(\sqrt{2}+1)^2$, we must have
\[
(\sqrt{2}+1)^2 = g(1/k) = \frac{k+1}{2}\left(\frac{\frac{1}{k}+1}{2}\right) = \left(\frac{1+k}{2\sqrt{k}}\right)^2 = \frac{1}{(k^*)^2},
\]
which is equivalent to
\[
k^*= \frac{1}{\sqrt{2}+1} = \sqrt{2}-1.
\]
Also note that
\[
(k^*)' = \sqrt{ 1 - (k^*)^2 } = \sqrt{1 - (\sqrt{2}-1)^2} = \sqrt{2\sqrt{2}-2}.
\]

By \lemref{lem:elliptic}, the extremal length of $\beta$ is equal to $4K(k)/K'(k)$, and the multiplication rule \eqref{eq:multiplication} equates this with $2K(k^*)/K'(k^*)$.

One computes that
\[
(k^*)^* = \frac{2\sqrt{k^*}}{1+k^*} = \sqrt{2\sqrt{2}-2} = (k^*)'  \quad \text{and hence} \quad ((k^*)^*)' = k^*.
\]
The multiplication rule applied to $k^*$ then yields
\[
\frac{2K(k^*)}{K'(k^*)}=\frac{K((k^*)^*)}{K'((k^*)^*)}=\frac{K'(k^*)}{K(k^*)},
\]
so that $K'(k^*)/K(k^*) = \sqrt{2}$ and $\EL(\beta,W) = \sqrt{2}$.

By \lemref{lem:branch}, the extremal length of the edge curves is then
\[
\EL(\alpha,Z) = \EL(f^{-1}(\beta),Z) = 2 \EL(\beta,W \setminus \{ \infty \}) = 2 \EL(\beta,W) = 2 \sqrt{2}. \qedhere
\]
\end{proof}

\begin{remark}
Given a positive integer $n$, the unique $k_n \in (0,1)$ such that $K'(k_n) / K(k_n) = \sqrt{n}$ is called a \emph{singular modulus} \cite[p.139]{AGM}. In the above proof, we showed the well-known fact that $k_2 = \sqrt{2} -1$. The square pillowcase from \propref{prop:baseball} corresponds to the first singular modulus $k_1 = 1/\sqrt{2}$.
\end{remark}

We will need to refer to the quadratic differentials realizing the extremal length of the edge curves later on to compute their derivative as we deform $\calO$. The quadratic differentials associated to the edge curve surrounding $[0,1]$ in $\calO$ can be recovered by pulling back the quadratic differential
\[
\frac{dz^2}{(1-z^2)(1-k^2z^2)},
\]
which realizes the extremal length on the quotient pillowcase (see \lemref{lem:elliptic}), under the branched cover $T \circ f\circ M$ from the above proof.
The result is equal to
\begin{equation} \label{eq:qdiff}
 q = \frac{(z+1+\sqrt{2})^2}{z(1-z^4)}dz^2
\end{equation}
up to a positive constant. We leave the details of this calculation to the diligent reader. 

\begin{figure}[htp]
     \centering
     \includegraphics[scale=.8]{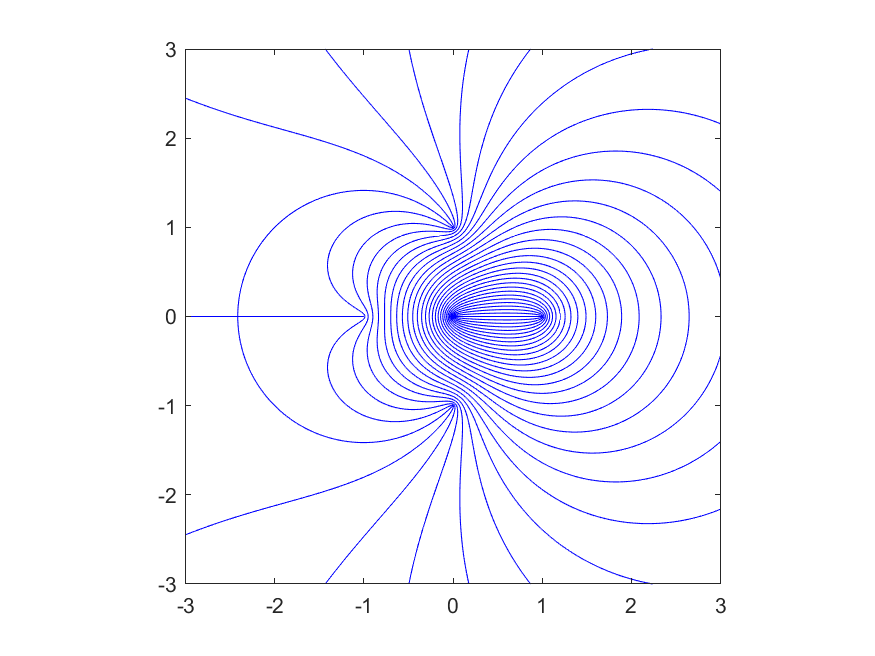}
     \caption{Horizontal trajectories for the quadratic differential realizing the extremal length of an edge curve}
     \label{fig:qd_edge}
\end{figure}

A less computationally intensive approach is to check that $q$ is invariant under the reflection across the real axis as well as the inversion $J$ in the circle of radius $\sqrt{2}$ centered at $-1$. The invariance of $q$ under complex conjugation holds because the coefficients of $q$ are real. Since $J(z)=h(\overline z)$ where $h(z) = (1-z)/(1+z)$, the invariance of $q$ under $J$ follows from its invariance under $h$, which is a calculation:
\begin{align*}
-h^*q &=  \frac{(h(z)+1+\sqrt{2})^2}{h(z)(h(z)^4-1)}(h'(z))^2dz^2 \\
&=  \frac{(h(z)+1+\sqrt{2})^2}{h(z)(h(z)^4-1)}\left( \frac{-2}{(1+z)^2}\right)^2dz^2 \\
&=  \frac{4((1-z)+(1+\sqrt{2})(1+z))^2}{(1+z)(1-z)((1-z)^4-(1+z)^4)}dz^2 \\
&=  \frac{8(z+1+\sqrt{2})^2}{(1-z^2)(-8z-8z^3)}dz^2 \\
& = \frac{(z+1+\sqrt{2})^2}{z(z^4- 1)}dz^2 = -q.
\end{align*}

From these symmetries and by checking the sign of $q$ at certain tangent vectors, it follows that the union of the critical horizontal trajectories of $q$ is equal to
\[
(-\infty,-1] \cup \left\{ -1- \sqrt{2}e^{i \theta} : \theta \in [-3\pi/4, 3\pi/4] \right \} \cup [0,1].
\]
The complement of this locus is homeomorphic to a cylinder, which forces the regular horizontal trajectories of $q$ to be closed and homotopic to each other. By the result of Jenkins cited in \exref{ex:simple}, $q$ is the extremal quadratic differential for the edge curve around $[0,1]$. A plot of the horizontal trajectories of $q$ is shown in \figref{fig:qd_edge}.

\subsection{The altitude curves}

The extremal length of the altitude curves can be calculated similarly as for the edge curves. We will not use this result; we only include it because it is one of the few examples symmetric examples where we can compute the extremal length.

\begin{prop} \label{prop:altitude}
The extremal length of any altitude curve in $\calO$ is equal to \[4K(u) / K(u') \in \left[5.8768721265012 \pm 1.18\cdot 10^{-14}\right],\]
where $u=\frac{\sqrt{2+\sqrt{2}}}{2}$ and $u'=\frac{\sqrt{2-\sqrt{2}}}{2}$.
\end{prop}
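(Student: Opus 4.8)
The plan is to mirror the proof of \propref{prop:roottwo}. By the symmetry of $\calO$ it suffices to treat one altitude curve, say the one whose pair of altitudes has common foot at the midpoint of the edge $[0,1]$; this curve encircles the two punctures $i$ and $-i$, the apexes of the two faces sharing $[0,1]$. First apply the M\"obius transformation $M$ from the proof of \propref{prop:roottwo} (the ``rotation'' of angle $-\pi/4$ about $\pm i$). Under $M$ the foot of the altitudes goes to the origin, so the altitude curve becomes symmetric under $z\mapsto -z$, encircling the arc from $i$ to $-i$ through $0$. Squaring via $f(z)=z^2$ therefore collapses it to a simple closed curve $\beta$ in $W:=\CHAT\setminus\{-1,0,3-2\sqrt2,3+2\sqrt2,\infty\}$ separating $\{0,-1\}$ from $\{3-2\sqrt2,3+2\sqrt2,\infty\}$ (the only difference from the edge-curve case is that there the analogous $\beta$ separated $\{0,3-2\sqrt2\}$ from $\{-1,3+2\sqrt2,\infty\}$). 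Just as in \propref{prop:roottwo}, the map $f\circ M:\calO\to\CHAT$ is a degree-two holomorphic branched cover to which \lemref{lem:branch} applies with $P=\varnothing$: its two critical values, $0$ and $\infty$, each have a single preimage, namely the critical point $\sqrt2-1$ respectively $-(\sqrt2+1)$, neither of which is a puncture of $\calO$. This gives $\EL(\text{altitude curve},\calO)=2\,\EL(\beta,W)$.

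Next, reduce $W$ to a four-times-punctured sphere exactly as in \propref{prop:roottwo}. On $\CHAT\setminus\{-1,0,3-2\sqrt2,\infty\}$ the extremal quadratic differential for $\beta$ has critical horizontal trajectories $[-1,0]$ and $[3-2\sqrt2,+\infty]\cup\{\infty\}$; since $3+2\sqrt2$ lies in the interior of the second trajectory, the same differential is also extremal for $\beta$ on $W$, so $\EL(\beta,W)=\EL\bigl(\beta,\CHAT\setminus\{-1,0,3-2\sqrt2,\infty\}\bigr)$. There is a unique $k\in(0,1)$ together with a M\"obius transformation carrying $-1,0,3-2\sqrt2,\infty$ to $-1,1,1/k,-1/k$ in the cyclic order that makes $\beta$ the standard curve of \lemref{lem:elliptic}; writing out the three normalizing conditions and solving, one finds that $k$ satisfies $k^2-(14-8\sqrt2)\,k+1=0$. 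Then \lemref{lem:elliptic} gives $\EL\bigl(\beta,\CHAT\setminus\{-1,0,3-2\sqrt2,\infty\}\bigr)=4K(k)/K'(k)$, hence $\EL(\text{altitude curve},\calO)=8K(k)/K'(k)$. Applying the multiplication rule \eqref{eq:multiplication} rewrites this as $4K(k^*)/K'(k^*)$ with $k^*=2\sqrt k/(1+k)$, and the quadratic relation for $k$ yields $(k^*)^2=4k/(1+k)^2=4/(16-8\sqrt2)=(2+\sqrt2)/4=\cos^2(\pi/8)$, so $k^*=\cos(\pi/8)=u$. Since $K'(u)=K(u')$ by definition of the complementary integral, this gives $\EL(\text{altitude curve},\calO)=4K(u)/K(u')$.

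The numerical enclosure then follows from a rigorous evaluation of $4K(u)/K(u')=4\,\mathrm{AGM}(1,u)/\mathrm{AGM}(1,u')$, using $K(k)=\pi/\bigl(2\,\mathrm{AGM}(1,k')\bigr)$: the arithmetic--geometric mean iteration traps its limit between the two sequences with a quadratically shrinking gap, so a few steps carried out with interval arithmetic produce the stated bracket. I expect the main obstacle to be the middle paragraph: one has to identify correctly how the altitude curve descends under the double cover, check that $3+2\sqrt2$ (and not some other puncture) is the one that can be removed, and then push the resulting modulus through the Landen/multiplication identity to reach the closed form $4K(u)/K(u')$. Once that combinatorial and algebraic bookkeeping is done, the argument is entirely parallel to \propref{prop:roottwo}.
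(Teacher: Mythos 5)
Your proof is correct and follows essentially the same route as the paper: after the rotation $M$, quotient by $z\mapsto -z$ via the squaring map, observe that the extra puncture $3+2\sqrt{2}$ lies on a critical horizontal trajectory and hence does not affect $\EL(\beta)$, normalize the remaining four punctures to the standard configuration of \lemref{lem:elliptic}, and finish with the multiplication rule \eqref{eq:multiplication}; your modulus $k^*=\sqrt{2+\sqrt{2}}/2$ and final value $4K(u)/K(u')$ agree with the paper's. The only cosmetic difference is that you pin down $k$ through a cross-ratio/quadratic-equation computation, whereas the paper translates by $1$ and writes the M\"obius map $g$ explicitly, both giving $(k^*)^2=1/(4-2\sqrt{2})$.
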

\begin{proof}
We use the same model $Z=\CHAT \setminus\{\pm i,\pm(\sqrt{2}-1), \pm (\sqrt{2}+1)\}$ as for the edge curves, and take the altitude curve $\alpha$ to surround the vertical line segment between $i$ and $-i$.

Upon squaring, the punctures map to $-1$, $3-2\sqrt{2}$ and $3+2\sqrt{2}$. However, we also need to puncture at the critical values $0$ and $\infty$. The curve $\alpha$ maps to $\beta^2$ where $\beta$ is a simple closed curve surrounding $[-1,0]$. The critical trajectories for the quadratic differential associated to $\beta$ on $W= \C \setminus \{ -1,0,3-2\sqrt{2} \}$ are equal to $[-1,0]$ and $[3-2\sqrt{2},+\infty]$. Since $3+2\sqrt{2}$ lies along one of these trajectories, this puncture does not affect extremal length.  

We translate by $1$ to map the punctures of $W$ to $0$, $1$, $4-2\sqrt{2}$ and $\infty$. The M\"obius tranformation sending $-1$ to $0$, $1$ to $1$ and $-1/k$ to $\infty$ is equal to
\[
g(z) = \frac{k+1}{2}\left(\frac{z+1}{kz+1}\right).
\]
For it to send $1/k$ to $4-2\sqrt{2}$ we need to have
\[
4-2\sqrt{2} = g(1/k) = \left(\frac{1+k}{2\sqrt{k}}\right)^2 = \frac{1}{(k^*)^2},
\]
which can rewrite as $k^* = \sqrt{\frac{1}{4-2\sqrt{2}}} = \frac{\sqrt{2+\sqrt{2}}}{2}$.
The complementary modulus is
\[
(k^*)' = \sqrt{1-(k^*)^2} = \frac{\sqrt{2-\sqrt{2}}}{2}.
\]

By \lemref{lem:branch}, the above remark about the superfluous puncture, \lemref{lem:elliptic}, and the multiplication rule \eqref{eq:multiplication}, we obtain
\[
\EL(\alpha,Z) = \EL(f^{-1}(\beta),Z) = 2 \EL(\beta, W \setminus \{ 3+2\sqrt{2} \}) = 2\EL(\beta,W) = \frac{8K(k)}{K'(k)} = \frac{4K(k^*)}{K'(k^*)}.
\]

To compute elliptic integrals numerically, computer algebra systems use the arithmetic-geometric mean $M$ via the formula $K(a) = \pi / (2M(1,a'))$ \cite[Theorem 1.1]{AGM}. The ratio of two complementary integral then becomes $K(a)/K'(a) = M(1,a)/M(1,a')$. The C library Arb for arbitrary-precision interval arithmetic developed Fredrik Johansson \cite{Johansson} contains an implementation of $M$, which provides certified error bounds for its calculation. The Arb package is available in SageMath \cite{sagemath}, where the command
\begin{center}
\texttt{4*CBF(sqrt(2+sqrt(2))/2).agm1()/CBF(sqrt(2-sqrt(2))/2).agm1()}
\end{center}
certifies that $4K(k^*)/K'(k^*)$ belongs to the interval $\left[5.8768721265012 \pm 1.18\cdot 10^{-14}\right]$.
\end{proof}

\subsection{The face curves}

We finish with the face curves, which have the smallest extremal length of the lot.

\begin{prop} \label{prop:zigzag}
The extremal length of any face curve in $\calO$ is equal to \[6K(v)/K(v')\in \left[2.79957467136936 \pm 8.4\cdot10^{-15}\right],\] where $v=1/\sqrt{27+15\sqrt{3}}$ and $v' = 1/\sqrt{27-15\sqrt{3}}$.
\end{prop}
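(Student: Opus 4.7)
The plan is to follow the strategy of Propositions~\ref{prop:baseball}, \ref{prop:roottwo}, and \ref{prop:altitude}: exploit the three-fold rotational symmetry preserving the face curve, pass to the quotient via a cube map to land on a four-times-punctured sphere, and compute the resulting extremal length with \lemref{lem:elliptic}, using the multiplication rule \eqref{eq:multiplication} to recast the answer in the stated form.

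I would first identify the order-three conformal automorphism $R$ of $\calO$ that fixes the two opposite face centers $c_+, c_-$ associated with the chosen face curve $\gamma$ and cyclically permutes the three vertices of each of those faces. With the octahedron placed so that its vertices project to $\pm 1, \pm i, 0, \infty$, stereographic projection gives $c_+ = (1+i)(\sqrt{3}+1)/2$ and $c_- = -(1+i)(\sqrt{3}-1)/2$. Applying the Möbius transformation
\[ T(z) = \frac{1-c_-}{1-c_+}\cdot \frac{z-c_+}{z-c_-} \]
sends $c_+\mapsto 0$, $c_-\mapsto \infty$, and $1\mapsto 1$; in these coordinates $R$ becomes $z\mapsto \omega z$ with $\omega = e^{2\pi i/3}$. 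The six vertices then split into two $R$-orbits $\{1,\omega,\omega^2\}$ and $\{b,b\omega,b\omega^2\}$, and using the identities $c_++c_- = 1+i$ and $c_+c_- = -i$ a short computation gives $b = T(-1) = -(2+\sqrt{3})$.

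Next, the cube map $z\mapsto z^3$ realizes the quotient by $\langle R\rangle$: it collapses each orbit to a single point and has critical values $0$ and $\infty$. Thus $\gamma$ projects to a simple closed curve $\delta$ on $\CHAT\setminus\{0,1,b^3,\infty\}$ with $b^3 = -(26+15\sqrt{3})$, separating $\{0,1\}$ from $\{b^3,\infty\}$. Since the preimage of these four points consists of the six vertices together with the two critical points $\{0,\infty\}$, \lemref{lem:branch} yields
\[ \EL(\gamma,\calO) = 3\cdot \EL(\delta,\CHAT\setminus\{0,1,b^3,\infty\}). \]

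To finish, I would bring the four quotient punctures into the standard form $\{\pm 1,\pm 1/k\}$ of \lemref{lem:elliptic}. Imposing the conditions $0\mapsto -1,\ 1\mapsto -1/k,\ \infty\mapsto 1,\ b^3\mapsto 1/k$ on a Möbius transformation forces $b^3 = -(1-k)^2/(4k)$, hence $(1-k)^2/(4k) = 26+15\sqrt{3}$. The left-hand side equals $((k^*)'/k^*)^2$ with $k^* = 2\sqrt{k}/(1+k)$ and $(k^*)' = (1-k)/(1+k)$, so squaring and solving gives $(k^*)^2 = 1/(27+15\sqrt{3}) = v^2$, i.e.\ $k^* = v$. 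Combining \lemref{lem:elliptic} with the multiplication rule \eqref{eq:multiplication} then yields
\[ \EL(\delta) = \frac{4K(k)}{K'(k)} = \frac{2K(k^*)}{K'(k^*)} = \frac{2K(v)}{K'(v)}, \]
so $\EL(\gamma,\calO) = 6K(v)/K'(v)$. The certified numerical interval follows from the same arithmetic-geometric mean computation in Arb as in \propref{prop:altitude}. The main technical obstacle is the careful sign bookkeeping that identifies $b = -(2+\sqrt{3})$ rather than $+(2+\sqrt{3})$, since this determines whether the resulting modulus matches the claimed value of $v$; once $b$ is pinned down, the computation collapses to a single application of the multiplication rule.
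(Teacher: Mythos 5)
Your overall strategy is the same as the paper's: exploit the order-three rotation fixing the two opposite face centers, pass to the quotient via the cube map, and apply \lemref{lem:branch}, \lemref{lem:elliptic}, and the multiplication rule \eqref{eq:multiplication}. Your normalization is slightly different (you send the face centers to $0,\infty$ and a vertex to $1$ via $T$, whereas the paper's $M$ sends $0,1,i$ to the cube roots of unity and determines the outer ring by trigonometry), but both land on the same punctured sphere $\CHAT\setminus\{1,\omega,\omega^2,b,b\omega,b\omega^2\}$ with $b=-(2+\sqrt{3})$, and your direct computation of $T(-1)$ using $c_++c_-=1+i$, $c_+c_-=-i$ is a perfectly valid (and arguably cleaner) way to pin this down.

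There is, however, an error in the normalization step that you should fix before trusting the sign bookkeeping you flag as the crux. You write that imposing $0\mapsto -1,\ 1\mapsto -1/k,\ \infty\mapsto 1,\ b^3\mapsto 1/k$ on a M\"obius transformation forces $b^3=-(1-k)^2/(4k)$. Two things go wrong here. First, any M\"obius map with $0\mapsto -1$ and $\infty\mapsto 1$ has the form $z\mapsto (z-e)/(z+e)$; imposing $1\mapsto -1/k$ gives $e=-(1+k)/(1-k)$, and then $b^3\mapsto 1/k$ forces $b^3=\bigl((1+k)/(1-k)\bigr)^2>0$, contradicting $b^3=-(26+15\sqrt{3})<0$. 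Second, even if it were consistent, this assignment sends $\{0,\infty\}$ to $\{\mp 1\}$ and $\{1,b^3\}$ to $\{\mp 1/k\}$, so the image of $\delta$ separates $\{-1,-1/k\}$ from $\{1,1/k\}$; this is the ``vertical'' curve on the pillowcase, whose extremal length is $K'(k)/K(k)$, not the $4K(k)/K'(k)$ of \lemref{lem:elliptic}. The conditions you actually need (and which do produce your stated equation $b^3=-(1-k)^2/(4k)$) are $0\mapsto -1,\ 1\mapsto 1,\ \infty\mapsto 1/k,\ b^3\mapsto -1/k$, so that $\{0,1\}\to\{\pm 1\}$ and $\{b^3,\infty\}\to\{\mp 1/k\}$, matching both the hypotheses of \lemref{lem:elliptic} and the paper's transformation $g^{-1}$. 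With that correction, the computation $(1-k)^2/(4k)=((k^*)'/k^*)^2=26+15\sqrt{3}$, hence $(k^*)^2=1/(27+15\sqrt{3})$ and $\EL(\gamma,\calO)=3\cdot 4K(k)/K'(k)=6K(v)/K'(v)$, is correct and matches the paper.
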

\begin{proof}
We start by applying a M\"obius transformation $M$ to display the three-fold symmetry of the face curves. We do this by sending $0$, $1$ and $i$ to the third roots of unity. Writing the explicit formula for $M$ is a bit messy, so we do some trigonometry instead in order to determine where $M$ sends the other vertices of $\calO$. The key observation is that $M$ sends the coordinate axes and the unit circle to three congruent circles that pairwise intersect orthogonally at one of the three third roots of unity.

\begin{figure}[htp]
     \centering
     \includegraphics[width=0.65\textwidth]{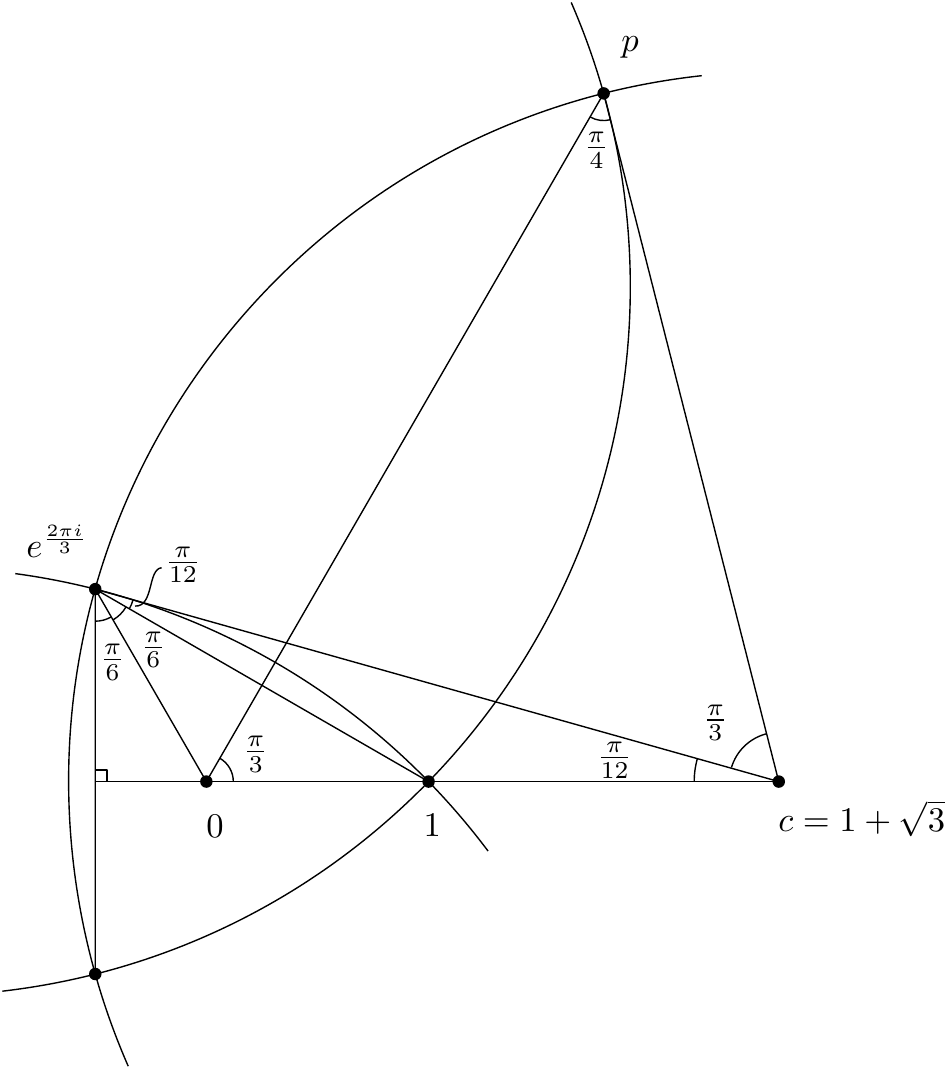}
     \caption{Three congruent circles intersecting at right angles}
     \label{fig:venn}
\end{figure}

Among the centers of these three circles, let $c$ be the one that lies on the real axis. By looking at the angles in \figref{fig:venn}, we see that the triangle with vertices $1$, $e^{2\pi i /3}$ and $c$ is isoceles, so that $c = 1 + \sqrt{3}$. Now consider the triangle $\Delta$ with vertices at $0$,  $c$, and the intersection point $p=r e^{ i \pi/ 3 }$ between two of the circles. The interior angles of $\Delta$ at $0$, $p$, and $c$ are equal to $\pi/3$, $\pi/4$ and $5\pi / 12$ respectively. By the law of sines, we have
\begin{align*}
r & = \frac{(1+\sqrt{3})}{\sin(\frac{\pi}{4})} \sin\left(\frac{5\pi}{12}\right)\\
&= \frac{(1+\sqrt{3})}{\sin(\frac{\pi}{4})} \left( \cos\left(\frac{\pi}{6}\right) \sin\left(\frac{\pi}{4}\right)+\sin\left(\frac{\pi}{6}\right) \cos\left(\frac{\pi}{4}\right)\right) \\
& = \frac{(1+\sqrt{3})^2}{2} = 2 + \sqrt{3}.
\end{align*}
It follows that $Z = M(\calO) = \CHAT \setminus \{ 1,e^{2\pi i / 3},e^{-2\pi i / 3}, -(2 + \sqrt{3}), (2+\sqrt{3})e^{\pi i/ 3}, (2+\sqrt{3})e^{-\pi i/ 3}  \}$. By construction, $M$ sends the face curve depicted in \figref{fig:face} to the homotopy class of the circle $\alpha$ of radius $2$ centered at the origin.

The cubing map $f(z)=z^3$ quotients out the three-fold symmetry. It maps the punctures to $1$ and $-(2+\sqrt{3})^3$ and has critical values at $0$ and $\infty$. We thus let $W = \C \setminus \{-(2+\sqrt{3})^3, 0, 1 \}$. The curve $f(\alpha)$ is equal to $\beta^3$ where $\beta$ separates $[0,1]$ from the other two punctures.

In order to express $\EL(\beta,W)$ as a ratio of elliptic integrals, we apply another M\"obius transformation to send $0$, $1$ and $\infty$ to $-1$, $1$ and $1/k$ respectively, for some $k\in (0,1)$. The inverse of the required transformation is
\[
g(z)= \frac{1-k}{2}\left(\frac{1+z}{1-kz}\right).
\]
Since we want $g$ to map $-1/k$ to $-(2+\sqrt{3})^3$, we obtain the equation
\[
-(2+\sqrt{3})^3 = g(-1/k) = \left(\frac{1-k}{2}\right)\left(\frac{1-\frac{1}{k}}{2}\right)= -\left( \frac{1-k}{2\sqrt{k}} \right)^2,
\]
or
\[
\frac{1-k}{2\sqrt{k}} = (2+\sqrt{3})^{3/2}.
\]
Observe that
\[
\frac{1}{((k^*)')^2} = \left( \frac{1+k}{1-k} \right)^2 = 1 + \left( \frac{2 \sqrt{k}}{1-k} \right)^2 = 1 + \frac{1}{(2+\sqrt{3})^3}=1 + (2-\sqrt{3})^3=27-15\sqrt{3}
\]
so that
\[
(k^*)^2 =  1 - ((k^*)')^2 = 1 - \frac{1}{1+(2-\sqrt{3})^3} = \frac{(2-\sqrt{3})^3}{1+(2-\sqrt{3})^3} = \frac{1}{1+(2+\sqrt{3})^3}=\frac{1}{27+15\sqrt{3}}.
\]

By \lemref{lem:branch}, \lemref{lem:elliptic}, and the multiplication rule \eqref{eq:multiplication}, we obtain
\[
\EL(\alpha, Z)= \EL(f^{-1}(\beta),Z)= 3 \EL(\beta, W) = \frac{12K(k)}{K'(k)} =  \frac{6K(k^*)}{K'(k^*)}.
\]
The command
\begin{center}
\texttt{6*CBF(1/sqrt(27+15*sqrt(3))).agm1()/CBF(1/sqrt(27-15*sqrt(3))).agm1()}
\end{center}
in SageMath certifies that $6K(k^*)/K'(k^*)$ belongs to the interval stated.
\end{proof}

Although we will not use this, we note that the quadratic differential realizing the extremal length of the face curve $\alpha$ surrounding the cube roots of unity in \[\CHAT \setminus \{ 1,e^{2\pi i / 3},e^{-2\pi i / 3}, -(2 + \sqrt{3}), (2+\sqrt{3})e^{\pi i/ 3}, (2+\sqrt{3})e^{-\pi i/ 3}  \}\] is
\[
q=\frac{z}{(1-z^3)(z^3+(2+\sqrt{3})^3)} dz^2.
\]
Indeed, this differential is invariant under rotations by $e^{2\pi i / 3}$ and is positive along \[(-\infty,-2-\sqrt{3}) \cup (0,1).\] This implies that the set of critical horizontal trajectories is a tripod joining the origin to $1$, $e^{2\pi i / 3}$, $e^{-2\pi i / 3}$ together with a ray from each of the other three punctures out to infinity. Since the complement of the set of critical horizontal trajectories is homeomorphic to an annulus, all other horizontal trajectories are closed and homotopic to each other.

By a similar argument, the regular vertical trajectories of $q$ are all homotopic to a simple closed curve $\gamma$ intersecting the face curve $\alpha$ six times. The equality case in Minsky's inequality \cite[Lemma 5.1]{MinskyIneq} then yields \[\EL(\gamma)= 6^2 / \EL(\alpha) = 6K(v')/K(v) \in [12.8590961934912 \pm 6.81\cdot 10^{-14}].\]
Some horizontal and vertical trajectories of $q$ are shown in \figref{fig:qd_face}.

\begin{figure}[htp]
     \centering
     \includegraphics[scale=.8]{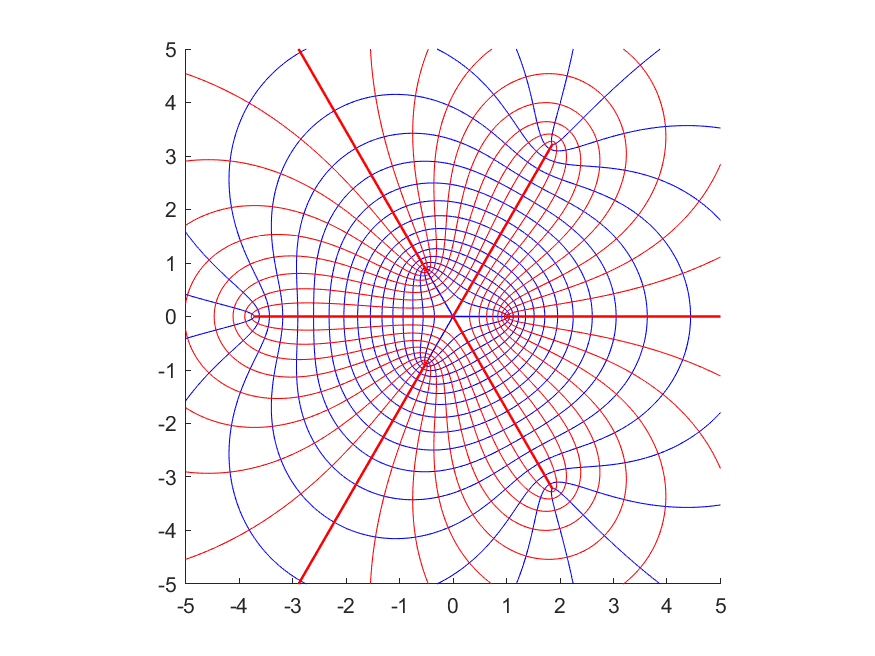}
     \caption{Horizontal (blue) and vertical (red) trajectories for the quadratic differential realizing the extremal length of a face curve}
     \label{fig:qd_face}
\end{figure}
\section{Geodesics on the regular octahedron} \label{sec:estimates}

In this section, we prove lower bounds on the extremal length of all simple closed curves on the punctured octahedron $\calO$ other than those for which we could compute it explicitly. We do this by using the conformal metric $\rho$ coming from the surface of the regular octahedron, scaled so that the edges have length $1$ and hence the total area is equal to $2\sqrt{3}$. This metric, which we call the \emph{flat metric}, is in the conformal class of $\calO$. Indeed, any of the curvilinear faces of $\calO$ can be mapped conformally onto an equilateral face of the regular octahedron via the Riemann mapping theorem, and the mapping can be extended to all of $\calO$ by repeated Schwarz reflection across the sides. 

The first step is to obtain lower bounds on the infimal length $\ell_\rho(c)$ for various homotopy classes of simple closed curves in $\calO$.

\begin{lem} \label{lem:geodesic}
Let $c$ be a homotopy class of simple closed curves in $\calO$. If $\ell_\rho(c)=2$, then $c$ is an edge curve. If $\ell_\rho(c)=3$, then $c$ is a face curve. Otherwise, $\ell_\rho(c)\geq 2\sqrt{3}$.
\end{lem}
\begin{proof}
Since $\rho$ is locally Euclidean and its completion $\overline{\calO}$ is compact, \propref{prop:tight} tells us that there is a closed curve $\gamma \subset \overline{\calO}$ which is a limit of a sequence of simple closed curves $\gamma_n$ in $c$ with $\lim_{n\to\infty} \length_\rho(\gamma_n)=\ell_\rho(c)$, passes through at least one vertex, and is geodesic away from the vertices. The curve $\gamma$ is thus a sequence of saddle connections on the regular octahedron. 

Suppose first that $\gamma$ passes through only one vertex of $\overline{\calO}$. Since $\gamma$ is geodesic away from that vertex, if it self-intersects, then the intersection must be transverse. It follows that $\gamma_n$ is not simple if $n$ is large enough, contrary to our assumption. We deduce that $\gamma$ is simple. However, there does not exist a simple geodesic loop from a vertex to itself on the regular octahedron \cite[Theorem~3.1]{Fuchs}. Therefore, $\gamma$ passes through at least two vertices. 

Note that the distance between adjacent vertices is equal to $1$ and the distance between opposite vertices is equal to $\sqrt{3}$. In particular, if $\gamma$ passes through two opposite vertices, then its length is at least $2\sqrt{3}$. We can thus assume that $\gamma$ passes through two or three pairwise adjacent vertices and no other vertices.

If $\gamma$ passes through two adjacent vertices, then its length is at least $2$ with equality only if it traces an edge twice. In that case, $c$ is the homotopy class of the associated edge curve because a small neighborhood of the edge in $\calO$ is a twice punctured disk, and the only essential simple closed curve in such a surface is the boundary curve.

By inspecting the planar development of the regular octahedron, we see that the second shortest geodesic segment between two adjacent vertices has length $\sqrt{7}$ as depicted in \figref{fig:root7} (this is the shortest vector length in the hexagonal lattice after $0$, $1$, $\sqrt{3}$, and $2$). Thus, if $\gamma$ passes through only two adjacent vertices and has length larger than $2$, then it has length at least $1+\sqrt{7} > 2\sqrt{3}$.

\begin{figure}[htp]
     \centering
     \includegraphics[height=1.5in]{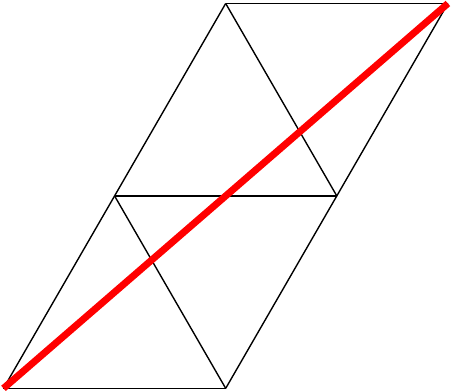}
     \caption{The second shortest saddle connection between adjacent vertices on the octahedron}
     \label{fig:root7}
\end{figure}

The final case to consider is if $\gamma$ passes through three adjacent vertices. Then its length is at least $3$, with equality only if $\gamma$ traces the boundary of a triangle. In this case, $c$ has to be a face curve. Indeed, the approximating curve $\gamma_n$ can bypass each vertex by circling either inside or outside the triangle, but if it passes inside, then $\gamma_n$ can be shortened by a definite amount within $c$, contradicting the hypothesis that $\length_\rho(\gamma_n)$ tends to the infimum $\ell_\rho(c)$.

By the above argument, the next shortest closed curve that passes through only three adjacent vertices and is otherwise geodesic has length at least $1+1+\sqrt{7} > 2\sqrt{3}$. 
\end{proof}

From this, we easily deduce that the essential simple closed curves  in $\calO$ with the first and second smallest extremal length are the face curves and the edge curves.

\begin{cor} \label{cor:spectrum}
The first and second smallest extremal lengths of essential simple closed curves in $\calO$ are $6K(v)/K'(v)$ and $2\sqrt{2}$,
where $v=1/\sqrt{27+15\sqrt{3}}$, and these are realized by the face curves and the edge curves respectively.
\end{cor}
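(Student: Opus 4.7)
The plan is to combine the length bound of \lemref{lem:geodesic} with the definition of extremal length to dispose of every homotopy class other than face and edge curves, and then to compare the three numerical values.

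First, I would recall that by definition of extremal length, for any homotopy class $c$ of essential simple closed curves in $\calO$ we have
\[
\EL(c, \calO) \geq \frac{\ell_\rho(c)^2}{\area_\rho(\calO)} = \frac{\ell_\rho(c)^2}{2\sqrt{3}},
\]
where $\rho$ is the flat metric of the regular octahedron with unit edges. By \lemref{lem:geodesic}, if $c$ is neither an edge curve nor a face curve, then $\ell_\rho(c) \geq 2\sqrt{3}$, giving
\[
\EL(c, \calO) \geq \frac{(2\sqrt{3})^2}{2\sqrt{3}} = 2\sqrt{3}.
\]

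Next I would compare the three candidate values. By \propref{prop:zigzag}, a face curve has extremal length $6K(v)/K'(v) \in [2.79957467136936 \pm 8.4 \cdot 10^{-15}]$, and by \propref{prop:roottwo}, an edge curve has extremal length $2\sqrt{2} = 2.8284\ldots$. Since
\[
\tfrac{6K(v)}{K'(v)} < 2\sqrt{2} < 2\sqrt{3},
\]
(the first inequality is a numerical check from the certified interval above, the second is immediate), the face curves realize the smallest extremal length and the edge curves the second smallest among the three classes. Combined with the previous paragraph, this shows that every other essential simple closed curve has extremal length at least $2\sqrt{3}$, strictly larger than both values, so the claim follows.

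There is essentially no obstacle beyond the bookkeeping of the inequalities; the real work was already done in \lemref{lem:geodesic} (geometric classification of short closed curves on the octahedron) and in \propref{prop:roottwo} and \propref{prop:zigzag} (exact computation of the extremal lengths of the edge and face curves via elliptic integrals). The only minor point to watch is that the comparison $6K(v)/K'(v) < 2\sqrt{2}$ rests on the certified numerical interval supplied by the Arb computation, so I would explicitly cite that interval rather than leaving it as a numerical footnote.
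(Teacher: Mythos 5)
Your proof is correct and follows exactly the same route as the paper: lower-bound extremal length by the squared flat-metric length divided by the area, invoke \lemref{lem:geodesic} to dispose of everything but edge and face curves, and then compare the three numerical values. (You even silently fix a typo in the paper's display, which writes $\ell_\rho(c)$ where it should be $\ell_\rho(c)^2$.)
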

\begin{proof}
The extremal lengths of the face and edge curves were determined in Propositions \ref{prop:zigzag} and \ref{prop:roottwo}. Note that $6K(v)/K'(v) < 2.799575 < 2.828427 < 2\sqrt{2}$.

Let $c$ be any other homotopy class of essential simple closed curve in $\calO$ and let $\rho$ be the flat metric on $\calO$. By \lemref{lem:geodesic}, we have $\ell_\rho(c) \geq 2\sqrt{3}$ so that
\[
\EL(c,\calO)\geq \frac{\ell_\rho(c)}{\area_\rho(\calO)}\geq \frac{(2\sqrt{3})^2}{2\sqrt{3}}= 2\sqrt{3} > 2 \sqrt{2}. \qedhere
\]
\end{proof}

An interesting consequence is that \cite[Proposition 3.3]{FanoniParlier}---stating that if two shortest closest geodesics on a hyperbolic surface intersect twice, then one of them must bound a pair of punctures---is false for the extremal length systole. Indeed, any two distinct face curves intersect twice and they are all $(3,3)$-curves. Given two distinct faces curves $f_1$ and $f_2$, observe that their smoothing with two components is a union of two edge curves $e_1$ and $e_2$. Despite the fact that
\begin{equation} \label{eq:smoothing}
 \ell_\rho(e_1)+\ell_\rho(e_2) =\ell_\rho(e_1 \cup e_2) \leq \ell_\rho(f_1 \cup f_2)  = \ell_\rho(f_1)+\ell_\rho(f_2)
\end{equation}
for every conformal metric $\rho$ and hence  $\EL(e_1 \cup e_2) \leq \EL(f_1 \cup f_2)$ \cite[Lemma 4.16]{MGT:20}, we do not have $\min_j \EL(e_j)\leq \max_k \EL(f_k)$. In other words, the smoothing argument from the proof of \thmref{thm:ELsyssimple} does not work for pairs of shortest closed curves. By \eqnref{eq:smoothing}, the edge curves are shorter than the face curves for any conformal metric $\rho$ invariant under the automorphisms of $\calO$, such as the hyperbolic metric. In the hyperbolic metric on $\calO$, the shortest closed geodesics are the edge curves and furthermore $\calO$ globally maximizes the hyperbolic systole in its moduli space. This is true more generally for principal congruence covers of the modular surface (see \cite[Theorem 13]{Schmutz:congruence} and \cite[Theorem 7.2]{Adams}).

We then apply \corref{cor:spectrum} to determine the extremal length systole of the Bolza surface.

\begin{cor}  \label{cor:roottwo}
The extremal length systole of the Bolza surface $\calB$ is equal to $\sqrt{2}$, and the only simple closed curves with this extremal length are lifts of edge curves in $\calO$.
\end{cor}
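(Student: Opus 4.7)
The plan is to translate the problem from the Bolza surface $\calB$ down to the punctured octahedron $\calO$ using the hyperelliptic double cover, and then to apply the spectral information from \corref{cor:spectrum}. By \thmref{thm:ELsyssimple}, it suffices to minimize $\EL(\alpha, \calB)$ over essential \emph{simple} closed curves $\alpha \subset \calB$. \propref{prop:genus2} splits this minimization into two cases according to whether $\alpha$ is separating or non-separating, and expresses the extremal length of $\alpha$ in terms of the extremal length of a $(3,3)$-curve or a $(2,4)$-curve on $\calO$, respectively.

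First, I would handle the non-separating case. Edge curves on $\calO$ are $(2,4)$-curves, and their lifts to $\calB$ are non-separating by \lemref{lem:curves}. By \propref{prop:genus2} and \propref{prop:roottwo},
\[
\EL(\text{lift of edge curve}, \calB) \;=\; \tfrac{1}{2} \EL(\text{edge curve}, \calO) \;=\; \tfrac{1}{2} \cdot 2\sqrt{2} \;=\; \sqrt{2},
\]
so $\ELsys(\calB) \leq \sqrt{2}$. For any other non-separating simple closed curve $\alpha$, the associated $(2,4)$-curve $\beta$ on $\calO$ is not an edge curve and also not a face curve (since face curves are $(3,3)$), so by \corref{cor:spectrum} we have $\EL(\beta, \calO) \geq 2\sqrt{3}$, giving $\EL(\alpha, \calB) \geq \sqrt{3} > \sqrt{2}$.

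Next, I would rule out the separating case. If $\alpha$ is separating, then it is a lift of some $(3,3)$-curve $\beta$ on $\calO$, and $\EL(\alpha, \calB) = 2\EL(\beta, \calO)$. By \corref{cor:spectrum}, the smallest extremal length among all essential simple closed curves on $\calO$ is $6K(v)/K'(v)$, so
\[
\EL(\alpha, \calB) \;\geq\; 2 \cdot \tfrac{6K(v)}{K'(v)} \;>\; 2 \cdot 2.79 \;>\; \sqrt{2}.
\]
Combining the two cases, $\ELsys(\calB) = \sqrt{2}$, and the only simple closed curves attaining it are lifts of edge curves.

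There is no serious obstacle here, since all the heavy lifting has been done: \propref{prop:genus2} supplies the exact relationship between extremal lengths upstairs and downstairs, \corref{cor:spectrum} supplies the spectrum on $\calO$, and \thmref{thm:ELsyssimple} reduces the extremal length systole to simple closed curves. The only minor point to check carefully is the parity/separation dictionary from \lemref{lem:curves}, namely that edge curves being $(2,4)$-curves lift to non-separating curves (so one divides by $2$) while face curves being $(3,3)$-curves lift to separating curves (so one multiplies by $2$); once that is in place, the numerical comparison $\sqrt{2} < \sqrt{3} < 2 \cdot 6K(v)/K'(v)$ finishes the proof.
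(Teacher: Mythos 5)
Your proof is correct and takes essentially the same route as the paper: reduce to essential simple closed curves via \thmref{thm:ELsyssimple}, push the computation down to $\calO$ via \propref{prop:genus2}, and invoke the spectral data of \corref{cor:spectrum}. One small point worth flagging in your favour: you have the separating/non-separating dictionary stated consistently with \propref{prop:genus2} and \lemref{lem:curves} (non-separating $\leftrightarrow$ $(2,4)$-curves with $\EL$ halved; separating $\leftrightarrow$ $(3,3)$-curves with $\EL$ doubled), whereas the paper's own proof of this corollary swaps the words ``separating'' and ``non-separating'' while keeping the correct arithmetic. The only minor difference is that you quote the $\geq 2\sqrt{3}$ lower bound for curves beyond face and edge curves, which appears inside the proof of \corref{cor:spectrum} rather than in its statement; the paper instead uses the (slightly weaker, but directly quotable) strict inequality $> 2\sqrt{2}$ coming from the fact that $2\sqrt{2}$ is the second smallest value and is realized only by edge curves. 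Either works.
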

\begin{proof}
By \thmref{thm:ELsyssimple}, the extremal length systole is realized by essential simple closed curves, so we may restrict our attention to these. 

Let $f: \calB \to \overline{\calO}$ be the quotient by the hyperelliptic involution. If $\alpha$ is a separating curve in $\calB$, then $\alpha$ is homotopic to one component of $f^{-1}(\beta)$ for some $(2,4)$-curve in $\calO$ and $\EL(\alpha,\calB)=\EL(\beta,\calO)/2$ according to \propref{prop:genus2}. If $\beta$ is an edge curve, then we get $\EL(\alpha,\calB) = \sqrt{2}$. Otherwise, $\EL(\beta,\calO) > 2\sqrt{2}$ by \corref{cor:spectrum}, so that $\EL(\alpha,\calB) > \sqrt{2}$.

If $\alpha$ is a non-separating curve in $\calB$, then \propref{prop:genus2} tells us that $\alpha$ is homotopic to $f^{-1}(\beta)$ for some $(3,3)$-curve $\beta$ in $\calO$ and $\EL(\alpha,\calB)=2 \EL(\beta,\calO)$. By \corref{cor:spectrum}, we have $\EL(\beta,\calO) \geq 6K(v)/K'(v) > 2.799574$ with $v$ as above, so that \[\EL(\alpha,\calB) \geq 12K(v)/K'(v) > 5.599148 > \sqrt{2},\]
as required.
\end{proof}
\section{Derivatives} \label{sec:derivatives}

In this section, we prove that the extremal length systole of genus two surfaces attains a strict local maximum at the Bolza surface $\calB$.

\subsection{Generalized systoles}
Let $M$ be a smooth connected manifold, let $\calI$ be an arbitrary set and for each $\alpha \in \calI$, let $L_\alpha : M \to \R$ be a $C^1$ function. If for every $x \in M$ and $B\in \R$, there is a neighborhood $U$ of $x$ such that the set 
\[
 \{ \alpha \in \calI : L_\alpha(y) \leq B \text{ for some } y \in U  \}
\]
is finite, then the infimum
\[
\mu(x)= \inf_{\alpha \in \calI} L_\alpha(x)
\]
is called a \emph{generalized systole} \cite{Bav:05}. The technical condition is there to ensure that the resulting function $\mu$ is continuous. As mentioned in the introduction, the extremal length systole fits into this framework.

\begin{lem}
The extremal length systole $\ELsys$, as a function on the Teichm\"uller space $\calT(S)$ of a surface $S$ of finite type, is a generalized systole in the sense of Bavard.
\end{lem}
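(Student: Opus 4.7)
The plan is to directly verify the two requirements in Bavard's definition of a generalized systole: that each length function is $C^1$, and that the local finiteness condition holds. By \thmref{thm:ELsyssimple}, if $S$ is not the thrice-punctured sphere, then $\ELsys(X) = \inf_{c \in \calS(S)} \EL(c, X)$ for every $X \in \calT(S)$, so I would take the index set to be $\calI = \calS(S)$ with $L_c(X) := \EL(c,X)$. In the degenerate case where $S$ is the thrice-punctured sphere, $\calT(S)$ is a single point and the conclusion is immediate (each $L_c$ is constant, hence $C^1$, and \lemref{lem:finite} at the unique point of $\calT(S)$ supplies the finiteness).

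For the $C^1$ regularity of each $L_c$, I would appeal to \exref{ex:simple}: the extremal metric for a simple closed curve $c$ is $\sqrt{|q_c|}$ for a Jenkins--Strebel holomorphic quadratic differential $q_c$ that is unique up to positive scaling, and Gardiner's formula \cite{G84:MinimalNormProperty} (used also in \secref{sec:derivatives}) writes the derivative of $L_c$ along any infinitesimal Beltrami deformation as an explicit pairing with $q_c$. In fact $L_c$ is real-analytic on Teichm\"uller space, but $C^1$ is all that is required.

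For the local finiteness condition, I would combine \lemref{lem:finite} with the standard quasiconformal distortion bound for extremal length: if $f : X \to Y$ is $K$-quasiconformal, then $K^{-1} \EL(c, X) \leq \EL(f(c), Y) \leq K \cdot \EL(c, X)$ for every homotopy class $c$. Since the Teichm\"uller distance between $X$ and $Y$ is $\tfrac{1}{2} \log K$ for the extremal dilatation, this yields
\[
e^{-2 d(X_0, X)} \EL(c, X_0) \leq \EL(c, X) \leq e^{2 d(X_0, X)} \EL(c, X_0),
\]
where $d$ denotes Teichm\"uller distance. Given $X_0 \in \calT(S)$ and $B \in \R$, I would take $U$ to be the Teichm\"uller ball of radius $\tfrac{1}{2}\log 2$ around $X_0$: any $c \in \calS(S)$ with $L_c(X) \leq B$ for some $X \in U$ then satisfies $\EL(c, X_0) \leq 2B$, and by \lemref{lem:finite} only finitely many such $c$ can arise. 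The proof is essentially an assembly of standard facts, so there is no serious obstacle; the only conceptual point is that restricting the index set to simple closed curves via \thmref{thm:ELsyssimple} sidesteps the question of whether extremal length is smooth for non-simple curves, where the extremal metric need not come from a holomorphic quadratic differential and no analogue of Gardiner's formula is known.
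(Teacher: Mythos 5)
Your proof follows the same route as the paper: dispatch the thrice-punctured sphere separately, restrict to simple closed curves via \thmref{thm:ELsyssimple}, invoke differentiability of extremal length for such curves, and obtain local finiteness from the Lipschitz dependence of extremal length on Teichm\"uller distance combined with \lemref{lem:finite}. The one soft spot is the $C^1$ step: Gardiner's formula gives the \emph{value} of the first derivative, but $C^1$ also requires that derivative to vary continuously, which amounts to knowing that the normalized Jenkins--Strebel differential $q_c$ depends continuously on the complex structure (a consequence of Hubbard--Masur); the paper closes this gap by citing \cite[Proposition 4.2]{GardinerMasur} directly, and you should do the same rather than rest the claim on Gardiner's formula alone.
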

\begin{proof}
First of all, the Teichm\"uller space $\calT(S)$ is a connected complex manifold. 

If $S$ is the thrice-punctured sphere, then $\calT(S)$ is a  point and there is nothing to show except that for every $B>0$, there are only finitely many essential closed curves with extremal length at most $B$, which was proved in \lemref{lem:finite}.

Otherwise, the extremal length systole is only achieved by simple closed curves according to \thmref{thm:ELsyssimple}, so we might as well restrict to these when taking the infimum. The extremal length of such a curve is $C^1$ on $\calT(S)$ \cite[Proposition 4.2]{GardinerMasur}.
 
 The last thing to do is to improve the pointwise finiteness of \lemref{lem:finite} to a local one. To this end, recall that the logarithm of extremal length is Lipschitz with respect to the Teichm\"uller distance $d$. More precisely, we have
\[
\EL(\alpha, X) \leq e^{2d(X,Y)}\EL(\alpha, Y) 
\]
for every $\alpha \in \calS(S)$ and every $X,Y \in \calT(S)$ (see e.g. \cite{Kerckhoff}). This implies the required local finiteness, as an upper bound for $\EL(\alpha, Y)$ in a ball centered at $X$ implies an upper bound on $\EL(\alpha, X)$, hence restricts $\alpha$ to a finite subset of $\calS(S)$.
\end{proof}

This implies that $\sysEL$ is continuous on Teichm\"uller space and therefore on moduli space. Since extremal length and hyperbolic length tend to zero together \cite[Corollary 2]{Maskit}, Mumford's compactness criterion implies that $\sysEL$ attains its maximum.

\subsection{Perfection and eutaxy}
Given $x\in M$, we denote by $\calI_x$ the set of $\alpha \in \calI$ such that $L_\alpha(x)=\mu(x)$. Bavard's definition of eutaxy and perfection \cite[D\'efinition 1.2]{Bav:05} is easily seen to be equivalent to the following, which we find easier to state.

\begin{defn} A point $x \in M$ is \emph{eutactic} if for every tangent vector $v \in T_x M$, the following implication holds: if $dL_\alpha(v)\geq 0$ for all $\alpha \in \calI_x$, then $dL_\alpha(v) = 0$ for all $\alpha \in \calI_x$.
\end{defn}
\begin{defn}
A point $x \in M$ is \emph{perfect} if for every $v \in T_x M$, the following implication holds: if $dL_\alpha(v) = 0$ for all $\alpha \in \calI_x$, then $v = 0$.
\end{defn}

If $x\in M$ is perfect and eutactic, then for every $v \in T_x M \setminus \{ 0 \}$ there is some $\alpha \in \calI_x$ such that $dL_\alpha(v) < 0$, and it follows easily that $\mu$ attains a strict local maximum at $x$ \cite[Proposition 2.1]{Bav:97}. Bavard proved that the converse holds if the functions $L_\alpha$ are convexoidal (i.e., convex up to reparametrization) along the geodesics for a connection on $M$ \cite[Proposition 2.3]{Bav:97}, thereby generalizing a theorem of Voronoi on the systole of flat tori \cite{Voronoi} and its analogue for hyperbolic surfaces \cite{Schmutz:localmax}. Akrout further proved that generalized systoles obtained from convex length functions are topologically Morse, with singularities equal to the eutactic points and index equal to the rank of the linear map $(dL_\alpha)_{\alpha \in \calI_x}$ \cite{Akrout}.

We do not know if there exists a connection on Teichm\"uller space with respect to which the extremal length functions are convexoidal; they are not convexoidal along Teichm\"uller geodesics \cite{nonconvex} or horocycles \cite{horo}. However, all we need here is the easy direction of Bavard's result, namely, that perfection and eutaxy are sufficient to have a local maximum.

\subsection{Triangular surfaces}

A Riemann surface $X$ is \emph{triangular} or \emph{quasiplatonic} if any of the following equi\-va\-lent conditions hold \cite[Theorem 4]{Wolfart}:
\begin{itemize}
    \item the quotient of $X$ by its group of conformal automorphisms is isomorphic to a sphere with three cone points (as an orbifold);
    \item $X \cong \H / \Gamma$ where $\Gamma$ is a normal subgroup of a triangle rotation group;
    \item $X$ is an isolated fixed point of a finite subgroup of the mapping class group acting on Teichm\"uller space.
\end{itemize}

Bavard showed that if the collection of length functions $\{ L_\alpha : \alpha \in \calI \}$ is invariant under a finite group $G$ acting by isometries on $M$, then any isolated fixed point of $G$ is eutactic \cite[Corollaire~1.3]{Bav:05}. Since the set of extremal length functions is invariant under the action of the mapping class group (which acts by isometries on Teichm\"uller space), we conclude that any triangular surface is eutactic for the extremal length systole (c.f. \cite[p.255]{Bav:05}).
 
 \subsection{The Bolza surface}
 
 It is clear that the punctured octahedron $\calO$ is quasiplatonic. The same is true for the Bolza surface $\calB$ since any conformal automorphism of $\calO$ lifts to $\calB$ in two different ways (related by the hyperelliptic involution). In fact, $\calO$ and $\calB$ are \emph{Platonic} in the sense that they admit tilings by regular polygons (triangles in this case) such that their group of conformal and anti-conformal automorphisms acts transitively on the flags of these tiling (triples consisting of a vertex, an edge, and a face, each contained in the next). The surfaces $\calO$ and $\calB$ are therefore eutactic for the extremal length systole.
 
To prove that $\ELsys$ attains a strict local maximum at $\calB$, all we have left to show is that $\calB$ is perfect, which amounts to proving that a certain linear map is injective. We can do this calculation at the level of six-times-punctured spheres. Indeed, the hyperelliptic involution induces a diffeomorphism $\Phi:\calT(S_{2,0}) \to \calT(S_{0,6})$ from the space of closed surfaces of genus two to the space of six-times-punctured spheres. 

Recall from \corref{cor:roottwo} that the curves in $\calB$ with the smallest extremal lengths are lifts of edge curves in $\calO$ (this is the set $\calI_x$ in the notation of generalized systoles). Let $f: \calB \to \overline{\calO}$ be the quotient by the hyperelliptic involution. For ease of notation, we will write $\EL_\alpha(Z)$ instead of $\EL(\alpha,Z)$. If $\beta \subset \calO$ is an edge curve, $\alpha$ is either component of $f^{-1}(\beta)$, and $Z$ is any surface in $\calT(S_{2,0})$, then $\EL_\alpha(Z) = \EL_\beta(\Phi(Z))/2$ according to \propref{prop:genus2}. It follows that $d\EL_\alpha(v) = d\EL_\beta(d\Phi(v))/2$ for every tangent vector $v \in T_\calB \calT(S_{2,0})$. Since $d\Phi$ is a bijection, to prove that $\calB$ is perfect, it thus suffices to show that if $d\EL_\beta(w) = 0$ for every edge curve $\beta \subset \calO$, then $w=0$. 

\subsection{Gardiner's formula}

The tangent space $T_X \calT(S)$ to Teichm\"uller space at a surface $X$ is isomorphic to a quotient of the space of essentially bounded Beltrami differentials (or $(-1,1)$-forms) on $X$, while the cotangent space $T^*_X \calT(S)$ can be identified with the set of integrable holomorphic quadratic differentials (or $(2,0)$-forms) on $X$. We can define a bilinear pairing $T_X \calT(S) \times T^*_X \calT(S) \to \C$ between these objects by sending any pair $(\mu, q)$ to the integral of the $(1,1)$-form $\mu q$ over $X$.

Given an essential simple closed curve $\alpha \subset X$, recall that there is a holomorphic quadratic differential $q_\alpha$ all of whose regular horizontal trajectories are homotopic to $\alpha$, and that $q_\alpha$ is unique up to scaling. Gardiner's formula \cite[Theorem 8]{G84:MinimalNormProperty} says that the logarithmic derivative of the extremal length of $\alpha$ in the direction of $\mu$ is
\[
\frac{d\EL_\alpha(\mu)}{\EL_\alpha(X)} = \frac{2}{\|q_\alpha\|} \re \int_X \mu q_\alpha.
\]
for every $\mu \in T_X \calT(S)$, where $\|q_\alpha\|=\int_X |q_\alpha|$ is the area of the induced conformal metric.

\subsection{Punctured spheres}

The Teichm\"uller space $\calT(S_{0,p})$ of a punctured sphere admits local coordinates to $\C^{p-3}$. Indeed, if we map three of the punctures to $0$, $1$ and $\infty$ with a M\"obius transformation, then the location of the remaining $p-3$ punctures determines the surface locally (i.e., up to the action of the mapping class group). From this point of view, the tangent space $T_X \calT(S_{0,p})$ is naturally isomorphic to $\C^{p-3}$, whereby we attach a complex number $V(z_j)$ to each puncture $z_j$ of $X$ other than $0$, $1$, and $\infty$.

The two points of view can be reconciled by extending $V$ to a smooth vector field on $\CHAT$ that vanishes at $0$, $1$, and $\infty$. The Beltrami form $\mu = \overline{\partial} V$ then represents the same infinitesimal deformation as the one obtained by flowing the punctures along $V$ \cite[Equation (1.5)]{AhlforsRemarks}. Furthermore, the pairing of this deformation with an integrable holomorphic quadratic differential $q$ on $X$ is given by
\[
\int_X \mu q =  - \pi \sum_{j=1}^{p} \Res_{z_j} \left( q \cdot V(z_j)\frac{\partial}{\partial z}\right),
\]
where the sum is taken over all the punctures of $X$ \cite[Lemma 8.2]{FB15:HoloCouchTheorem}. Note that the product of a quadratic differential with a vector field is a $1$-form (locally, $dz^2 \cdot \frac{\partial}{\partial z} = dz$). The residue of a $1$-form $\omega$ at a point $z_j$ is defined in the usual way as $\Res_{z_j}(\omega)=\frac{1}{2\pi i} \oint_\gamma \omega$ where $\gamma$ is a small counterclockwise loop around $z_j$. 

Combined with Gardiner's formula, this yields
\begin{equation} \label{eq:residue}
\frac{d\EL_\alpha(\mu)}{\EL_\alpha(X)} = -\frac{2\pi}{\|q_\alpha\|} \re \sum_{j=1}^{p} \Res_{z_j} \left( q_\alpha \cdot V(z_j)\frac{\partial}{\partial z}\right)
\end{equation}
for any essential simple closed curve $\alpha$ in a punctured sphere $X$ and any $V:\CHAT\setminus X \to \C$.

\subsection{The edge curves}

A real basis $B=\{ b_1, \ldots, b_6 \}$ for the tangent space $T_\calO \calT(S_{0,6})$ is given by the vectors $\frac{\partial}{\partial z}$ and $-i\frac{\partial}{\partial z}$ at each of the three punctures $i$, $-1$, and $-i$. For each edge curve $\beta \in \calE$, to compute the derivatives $d\EL_\beta(b_j)$ we first need to write down a formula for the associated quadratic differential $q_\beta$.

Recall from subsection \ref{subsec:edge} that the quadratic differential for the edge curve $\beta_{0,1}$ surrounding the edge $[0,1]$ is
\[
q = \frac{(z+1+\sqrt{2})^2}{z(1-z^4)}dz^2=\frac{(z+1+\sqrt{2})^2}{z(1-z)(1+z)(i-z)(i+z)}dz^2.
\]
The residue of $q$ in the direction of $\frac{\partial}{\partial z}$ at each of $i$, $-1$, and $-i$ is equal to
\[
-\left(\frac{1+\sqrt{2}}{2}\right)(1+i), \quad -\frac{1}{2} , \quad \text{and} \quad -\left(\frac{1+\sqrt{2}}{2}\right)(1-i)
\]
respectively. Note that $\re \left(\Res_{a} \left( q \cdot -i\frac{\partial}{\partial z}\right)\right) = \im\left( \Res_{a} \left( q \cdot \frac{\partial}{\partial z}\right) \right)$ at any point $a$. Up to a constant, the logarithmic derivative of $\EL_{\beta_{0,1}}$ with respect to the basis $B$ is thus given by
\[
-\frac{\|q\|}{2\pi}\frac{d\EL_{\beta_{0,1}}}{\EL_{\beta_{0,1}}(\calO)}  = \left(\frac{-1-\sqrt{2}}{2}, \frac{-1-\sqrt{2}}{2},-\frac{1}{2}, 0 , \frac{-1-\sqrt{2}}{2}, \frac{1+\sqrt{2}}{2} \right)
\]
according to \eqnref{eq:residue}.

To compute the quadratic differential $q_\beta$ associated to a given edge curve $\beta\in \calE$, it suffices to find a M\"obius transformation $g: \calO \to \calO$ that sends $\beta$ to $\beta_{0,1}$. The desired quadratic differential is then the pullback $g^* q$. The required M\"obius transformations for the $12$ edge curves are $z$, $i z$, $-z$, $-iz$, $-(z-i)/(z+i)$, $-i(z-i)/(z+i)$, $(z-i)/(z+i)$, $i(z-i)/(z+i)$, $1/z$, $i/z$, $-1/z$, and $-i/z$. For each of these, we calculated the pullback differential and the residues at $i$, $-1$, and $-i$ using Maple \cite{Maple} (which we found was better than other computer algebra systems at cancelling factors on the denominator to compute residues) to avoid calculation mistakes. The resulting matrix for $-\frac{\|q_\beta\|}{\pi}\frac{d\EL_{\beta}(b_j)}{\EL_{\beta}(\calO)}$, where $\beta$ ranges over the edge curves and $b_j$ ranges over the basis vectors, is
\[
\begin{pmatrix}
-1 - \sqrt{2} & -1 - \sqrt{2} & -1            & 0             & -1 - \sqrt{2} & 1 + \sqrt{2}\\
0             &  -1           & -1 - \sqrt{2} & -1 - \sqrt{2} & 0             & -3-2\sqrt{2} \\
1 + \sqrt{2}  & -1 - \sqrt{2} & 3+2\sqrt{2}   & 0             & 1 + \sqrt{2}  & 1 + \sqrt{2} \\
0             &  3+2\sqrt{2}  & -1 -\sqrt{2}  & 1 + \sqrt{2} & 0             & 1 \\
0             & 3+2\sqrt{2}   & -1 -\sqrt{2}  & 1 + \sqrt{2} & 0             & 1 \\
-3-2\sqrt{2}  & 0             & 0             & -3 -2\sqrt{2} & 1            & 0 \\
0             & -3-2\sqrt{2} & 1 + \sqrt{2} & 1 + \sqrt{2} & 0             & -1 \\
3 + 2\sqrt{2} & 0              & 0            &  1           & -1            & 0 \\
-1 - \sqrt{2} & 1 + \sqrt{2}   & 1            & 0            & -1 - \sqrt{2} & -1 - \sqrt{2} \\
0             & -3-2\sqrt{2} & 1 + \sqrt{2} & 1 +\sqrt{2}  & 0             & -1 \\
1+\sqrt{2}    & 1 + \sqrt{2}   & -3-2\sqrt{2} & 0             & 1 + \sqrt{2} & -1 - \sqrt{2} \\
0             & 1              & 1 + \sqrt{2} & -1 - \sqrt{2} & 0            & 3 + 2\sqrt{2}
\end{pmatrix}.
\]
As can be checked either by hand or in any computer algebra system, the above matrix has full rank. Its transpose is therefore injective, proving that the Bolza surface is perfect.

We saw earlier that the Bolza surface is also eutactic. Hence, it is a strict local maximum for $\ELsys$ by \cite[Proposition 2.1]{Bav:97}. Together with \corref{cor:roottwo}, this proves \thmref{thm:main}.

\subsection{The face curves}

By \corref{cor:spectrum}, the extremal length systole of $\calO$ is realized by the face curves, of which there are only four. It follows that $\calO$ is not perfect, since the dimension of $T_\calO \calT(S_{0,6})$ is equal to $6$. If extremal length was convexoidal, then we could conclude that $\calO$ is not a local maximizer for the extremal length systole by \cite[Proposition 2.3]{Bav:97}. 

Since $\calO$ is eutactic, there does not exist a tangent vector in the direction of which the extremal length of each face curve has a positive derivative. To determine if $\calO$ is a local maximizer for $\ELsys$ would therefore require estimating extremal length up to order two. Theorem 1.1 in \cite{LiuSu} shows that the sum of the second derivative of the extremal length of a curve along the Weil--Petersson geodesics in two directions $v$ and $iv$ is positive, but this could still allow one of them to be negative.

A potentially interesting deformation for disproving local maximality would be to twist two opposite faces with respect to each other and push them towards each other (at a slower rate). This should correspond to the direction $iv$ where $v$ is the gradient of the extremal length of the associated face curve. The extremal length systole of the square pillowcase (or the square torus) can be increased in that way. If such a deformation increased the extremal length systole of the octahedron, then one would expect to reach a local maximum once the opposite faces are aligned, that is, at a right triangular prism with equilateral base. However, the numerical calculations carried out in Appendix \ref{sec:prisms} indicate that all such prisms have extremal length systole at most $2.6236<\ELsys(\calO)$. Furthermore, the regular octahedron is an antiprism like the regular tetrahedron, which maximizes the extremal length systole in its moduli space by \corref{cor:tetrahedron}.

We therefore conjecture that $\calO$ maximizes the extremal length systole among all six-times-punctured spheres. This would imply that Voronoi's criterion fails for the extremal length systole and hence that extremal length is not convexoidal with respect to any connection on Teichm\"uller space.

\appendix
\section{A geometric proof of the Landen transformations}  \label{sec:Landen}

There are many known proofs of the Landen transformations \cite{LandenSurvey}. Although the proof we give below can be reformulated as a change of variable, it at least explains where the latter comes from.

\begin{thm} \label{thm:landen}
For any $k \in (0,1)$, we have
\[
K(k) = \frac{1}{1+k}\, K(k^*) \quad \text{and} \quad K'(k) = \frac{2}{1+k} \,K'(k^*)
\]
where $k^* = 2 \sqrt{k} / (1+k)$.
\end{thm}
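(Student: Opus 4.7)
My plan is to derive the Landen transformations as the period relations of a natural $2$-isogeny of elliptic curves arising from the pillowcase framework developed in \secref{sec:explicit}. Let $T(k)=\C/\Lambda(k)$ with $\Lambda(k)=4K(k)\Z+2iK'(k)\Z$ be the elliptic double cover of the pillowcase $P(k)$, on which the Jacobi function $\operatorname{sn}(\cdot,k)$ realizes the hyperelliptic projection with critical values $\{\pm 1,\pm 1/k\}$. Let $\sigma:u\mapsto u+iK'(k)$ be the fixed-point-free translation of order two, and form the quotient $T'=T(k)/\langle\sigma\rangle=\C/\Lambda'$ with $\Lambda'=4K(k)\Z+iK'(k)\Z$. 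The hyperelliptic involution $J$ on $T(k)$ commutes with $\sigma$ and so descends to $T'$; the $\sigma$- and $J$-invariant meromorphic function
\[
\tilde h(u)=\frac{\sqrt{k}}{2}\left(\operatorname{sn}(u,k)+\frac{1}{k\,\operatorname{sn}(u,k)}\right)
\]
then has degree one on $T'/J$ and furnishes a natural identification of $T'/J$ with $\CHAT$.

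The heart of the proof is to determine the images under $\tilde h$ of the four Weierstrass points $\{\pm K(k),\,\pm K(k)+iK'(k)/2\}$ of $T'$. Using the Jacobi imaginary-transformation law $\operatorname{cn}(iv,k)=1/\operatorname{cn}(v,k')$ together with the half-argument formulas for $\operatorname{cn}$ and $\operatorname{dn}$, I would establish the identity $\operatorname{sn}(K+iK'/2,k)=1/\sqrt{k}$; combining this with $\operatorname{sn}(\pm K,k)=\pm 1$ shows that $\tilde h$ sends the four Weierstrass points to the set $\{\pm 1/k^*,\,\pm 1\}$ with $k^*=2\sqrt{k}/(1+k)$. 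Hence $T'/J$ is isomorphic as a four-punctured sphere to the pillowcase $P(k^*)$, and consequently $T'$ is conformally equivalent to the elliptic double cover $T(k^*)$.

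The conformal isomorphism $T'\to T(k^*)$ is a rescaling $v=\lambda u$ for a positive scalar $\lambda$. Expanding $\tilde h(u)\sim 1/(2\sqrt{k}\,u)$ at the origin and equating it with the leading behavior $1/(k^*\operatorname{sn}(\lambda u,k^*))\sim 1/(k^*\lambda\,u)$ of the corresponding function on $T(k^*)$ yields $k^*\lambda=2\sqrt{k}$, so that $\lambda=1+k$. The lattice identity $(1+k)\Lambda'=\Lambda(k^*)$ then follows by matching generators: the real generator gives $4(1+k)K(k)=4K(k^*)$ and the imaginary generator gives $(1+k)iK'(k)=2iK'(k^*)$, which rearrange respectively to the upward and downward Landen transformations stated in the theorem. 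The main obstacle will be the critical-value computation establishing $\operatorname{sn}(K+iK'/2,k)=1/\sqrt{k}$, since this is what singles out the specific modulus $k^*=2\sqrt{k}/(1+k)$ rather than any other Möbius-equivalent representative, and also what fixes the scaling constant $\lambda=1+k$ as opposed to merely the multiplication rule \eqref{eq:multiplication}.
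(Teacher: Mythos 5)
Your proposal is correct and takes a genuinely different route from the paper's. The paper stays on the sphere: it realizes $\CHAT\setminus\{\pm 1,\pm 1/k^*\}$ as the pillowcase $P(k^*)$ via a Schwarz--Christoffel map, quotients by $z\mapsto -z$ (the rotation about the vertical axis of the pillowcase) to obtain a half-width pillowcase $Q(k^*)$, matches this with the $k$-pillowcase by a M\"obius transformation $g$ to get the ratio identity, and then pins down the scaling factor by pulling back explicit quadratic differentials under the squaring map and $g$ and integrating. You work upstairs on the elliptic curves: you take the double cover $T(k)=\C/(4K\Z+2iK'\Z)$, quotient by the fixed-point-free translation $\sigma$ of order two (a $2$-isogeny, whereas the paper's quotient is a branched degree-$2$ map of spheres), identify the quotient $T'$ with $T(k^*)$ by computing the images of the Weierstrass points under the descended function $\tilde h$, and extract the scalar $\lambda=1+k$ by matching principal parts at the pole $u=0$. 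The computation $\operatorname{sn}(K+iK'/2,k)=1/\sqrt{k}$ that you single out as the obstacle is indeed correct and follows from the imaginary transformation $\operatorname{sn}(iv,k)=i\operatorname{sn}(v,k')/\operatorname{cn}(v,k')$ together with the half-period values $\operatorname{sn}(K'/2,k')=1/\sqrt{1+k}$, $\operatorname{cn}(K'/2,k')=\sqrt{k/(1+k)}$, $\operatorname{dn}(K'/2,k')=\sqrt{k}$ and the addition theorem with $u=K$. One small point worth making explicit in the ``matching generators'' step: once you know $\lambda>0$ is real, $\lambda\cdot 4K(k)$ lies in $\Lambda(k^*)\cap\R_{>0}=4K(k^*)\N$ and $\lambda\cdot iK'(k)$ in $2iK'(k^*)\N$, and comparing covolumes forces both integer multiples to be $1$; this rules out accidentally proving only a proportionality. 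Your approach buys transparency about why $k^*$ appears (it is the modulus of the $2$-isogenous curve) at the cost of invoking more Jacobi-function machinery, whereas the paper's is self-contained given the pillowcase formalism it has already set up.
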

\begin{proof}
We start by proving the multiplication rule $2K(k)/K'(k) = K(k^*)/K'(k^*)$.

Recall that by choosing an appropriate branch of the square root in each half-plane, the Schwarz--Christoffel transformation
\[
T(\zeta) = \int_{\zeta_0}^\zeta \frac{dz}{\sqrt{(1-z^2)(1-(k^*)^2z^2)}}
\]
sends $\CHAT \setminus \{ -1/k^*, -1, 1, 1/k^* \}$ conformally onto a rectangular pillowcase $P(k^*)$ of width $2K(k^*)$ and height $K'(k^*)$, with the punctures mapping to the vertices. By symmetry, $T(0)$ is the midpoint of the bottom side of $P(k^*)$ and $T(\infty)$ is the midpoint of the top side.

Clearly, $T$ conjugates the action of $z \mapsto -z$  on $\CHAT \setminus \{ -1/k^*, -1, 1, 1/k^* \}$ with the rotation $J$ of angle $\pi$ around the vertical axis through $T(0)$ and $T(\infty)$ if we think of $P(k^*)$ as sitting upright in $\R^3$ (see \figref{fig:skewer}). That is, $J$ swaps the front and back faces of $P(k^*)$ and preserves the bottom and top edges.

\begin{figure}[htp]
    \centering
    \includegraphics[height=2in]{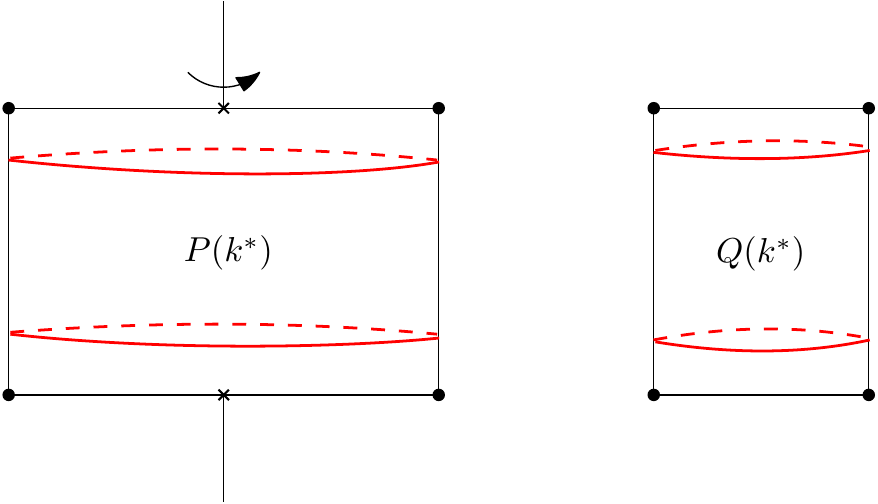}
    \caption{The hyperelliptic involution $J$ of the pillowcase $P(k^*)$ and the quotient $Q(k^*)$}
    \label{fig:skewer}
\end{figure}

To quotient by the action of $z\mapsto -z$, we apply the squaring map and puncture at the critical values, resulting in $\CHAT \setminus \{0,1, 1/(k^*)^2, \infty \}$. On the other hand, quotienting $P(k^*)$ by $J$ gives a pillowcase $Q(k^*)$ of half the width $K(k^*)$ and the same height $K'(k^*)$. The transformation $T$ descends to a conformal map between these two objects.

Observe that the M\"obius transformation
\[
g(z) = \left( \frac{1+k}{2} \right) \frac{1+z}{1+kz}
\]
sends $-1/k$, $-1$, $1$, and $1/k$ to $\infty$, $0$, $1$, and $1/(k^*)^2$ respectively. Another Schwarz-Christoffel transformation sends $\CHAT \setminus \{-1/k,-1,1,1/k \}$ to a pillowcase $P(k)$ of width $2K(k)$ and height $K'(k)$. Since the pillowcase representation of a four-times-punctured sphere is unique up to scaling, we have that $P(k)$ and $Q(k^*)$ have the same aspect ratio. That is,
\[
2K(k)/K'(k) = K(k^*)/K'(k^*).
\]
This implies that $K(k) = \lambda(k) K(k^*)$ and $K'(k) = 2 \lambda(k) K'(k^*)$ for some $\lambda(k)>0$. 

To determine this scaling factor, define
 \[
q=\frac{dz^2}{(1-z^2)(1-(k^*)^2z^2)}, \quad \omega = \frac{dz^2}{4z(1-z)(1-(k^*)^2 z)} \quad \text{and} \quad \tau = \frac{dz^2}{(1-z^2)(1-k^2z^2)}.
\]
If $f(z)=z^2$, then $f^*\omega = q$ and similarly $g^*\omega = \left( \frac{1+k}{2} \right)^2 \tau$ (these elementary calculations are left to the reader).

We then have 
\[
K(k^*) = \int_0^1 \sqrt{q} = \int_0^1 \sqrt{f^* \omega} = \int_0^1 \sqrt{\omega}\]
and
\[
K(k) = \int_0^1 \sqrt{\tau} = \frac{2}{1+k} \int_0^1 \sqrt{g^* \omega} = \frac{1}{1+k} \int_{-1}^1 \sqrt{g^* \omega} =
\frac{1}{1+k} \int_{0}^1 \sqrt{\omega} = \frac{1}{1+k} K(k^*).
\]
It follows that $K'(k) = (2K(k)/K(k^*)) K'(k^*) = 2K'(k^*)/(1+k)$.
\end{proof}

\section{Prisms and antiprisms} \label{sec:prisms}

In this section, we obtain upper bounds on the extremal length systole of six-times-punctured spheres with $D_3$-symmetry. These estimates indicate that the regular octahedron has maximal extremal length systole among all prisms and antiprisms, which leads us to think that it is the global maximizer despite not being perfect.

\subsection{Antiprisms}
Within the $1$-parameter family of antiprisms \[\calA_r = \CHAT \setminus \{ 1, e^{2\pi i / 3}, e^{-2\pi i / 3}, -r , re^{\pi i / 3}, r e^{-\pi i / 3}  \}\]  where $r\geq 1$, it is easy to see that the regular octahedron $\calO \cong \calA_{2+\sqrt{3}}$  locally maximizes the extremal length systole. Indeed, the extremal length of the central face curve surrounding the cube roots of unity in $\calA_r$ has a strictly negative derivative in the $\del / \del r$ direction, because this pushes the three punctures furthest away from the origin exactly in the direction where the residue of the associated quadratic differential is positive (the ``horizontal'' direction at these poles). Alternatively, this can be shown by applying the cubing map as in the proof of \propref{prop:zigzag} to get that the extremal length is exactly $6K(1/\sqrt{1+r^3})/K'(1/\sqrt{1+r^3})$. This ratio has a strictly negative derivative. Since $\calO$ is eutactic, the derivative of the extremal length of the other face curves at $r=2+\sqrt{3}$ must be negative in the direction $-\del / \del r$ (by rotational symmetry, the three other face curves have the same extremal length). Thus, the directional derivative of the extremal length systole is negative in both directions $\del / \del r$ and $-\del / \del r$ at the regular octahedron.

The other remarkable surface in the family of antiprims is $\calA_1$, which is conformally equivalent to the double of a regular hexagon.

\begin{prop}
The extremal length systole of $\calA_1$ is at most \[
4K(w)/K'(w) \in  \left[2.34031875460627 \pm 5.71\cdot 10^{-15} \right]\]
where $w=2-\sqrt{3}$.
\end{prop}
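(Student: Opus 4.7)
The plan is to exhibit a single essential simple closed curve in $\calA_1$ whose extremal length is at most $4K(w)/K'(w)$, by identifying $\calA_1$ conformally with a specific six-punctured sphere for which this bound follows from \lemref{lem:elliptic} together with the ``absorption'' trick used in \subsecref{subsec:edge}.

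First I would verify that the M\"obius transformation $M(z) = (z+i)/(iz+1)$ sends the sixth roots of unity, which are the punctures of $\calA_1$, onto the set $\{\pm 1, \pm w, \pm 1/w\}$, where $w = 2 - \sqrt{3}$ (so that $1/w = 2+\sqrt{3}$). The map $M$ clearly fixes $\pm 1$ and a quick computation shows that $M\circ M (z)= 1/z$. Using the elementary identity $\cot(\pi/12) = 2+\sqrt{3}$, a short calculation yields $M(e^{i\pi/3}) = 1/w$ and $M(e^{2\pi i /3}) = -1/w$, and the values at the complex conjugates then follow from $M\circ M(z) = 1/z$. Hence $\calA_1$ is conformally equivalent to $W_6 := \CHAT\setminus\{\pm 1, \pm w, \pm 1/w\}$.

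By \lemref{lem:elliptic}, on the four-punctured sphere $W_4 := \CHAT\setminus\{\pm 1, \pm 1/w\}$, the curve $\beta$ separating $\{\pm 1\}$ from $\{\pm 1/w\}$ has extremal length $4K(w)/K'(w)$, with extremal quadratic differential a positive scalar multiple of $q = dz^2/\big((1-z^2)(1-w^2 z^2)\big)$. The critical horizontal trajectories of $q$ are the segment $[-1,1]$ together with the complement of $(-1/w, 1/w)$ in $\R\cup\{\infty\}$; the two extra punctures $\pm w$ of $W_6$ lie inside $[-1,1]$, and thus on a critical trajectory of $q$. Exactly as in the proof of \propref{prop:roottwo}, the open cylinder $C$ of regular horizontal trajectories of $q$ is therefore still embedded in $W_6$, its core is homotopic in $W_6$ to the $(2,4)$-curve $\beta'$ separating $\{\pm 1,\pm w\}$ from $\{\pm 1/w\}$, and $C$ has modulus $K'(w)/(4K(w))$. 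This gives $\EL(\beta',W_6) \leq 4K(w)/K'(w)$, and hence $\ELsys(\calA_1) = \ELsys(W_6) \leq 4K(w)/K'(w)$. The certified numerical interval is then obtained from an Arb computation as in \propref{prop:altitude}.

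The main (and essentially only) obstacle is the explicit verification that $M$ maps the sixth roots of unity to $\{\pm 1, \pm w, \pm 1/w\}$, a short algebraic computation hinging on the identity $\cot(\pi/12) = 2+\sqrt{3}$; everything else is an application of results already established in the body of the paper.
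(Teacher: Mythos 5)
Your proof is correct, and it takes a genuinely different (and cleaner) route than the paper's. Both arguments begin by conformally identifying $\calA_1$ with a six-times-punctured sphere whose punctures lie on the real axis, but the normalizations differ in a way that matters. You choose a M\"obius map $M$ whose image configuration $\{\pm 1, \pm w, \pm 1/w\}$ is invariant under \emph{both} $z\mapsto -z$ and $z\mapsto 1/z$, and this extra symmetry lets you skip two steps the paper needs: you can read off a Lemma~\ref{lem:elliptic} pillowcase directly by forgetting the punctures $\pm w$, so you never invoke the branched-cover Lemma~\ref{lem:branch} (the paper pushes forward under $z\mapsto z^2$ to quotient out the $\pm$-symmetry and pays for it with a degree-$2$ factor), and you never need the Landen multiplication rule~\eqref{eq:multiplication} (the paper uses it to convert $8K(k)/K'(k)$ back to $4K(k^*)/K'(k^*)$, whereas your $k$ is already equal to $w$). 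I checked the M\"obius computation: $M(e^{i\pi/3}) = (e^{i\pi/3}+e^{i\pi/2})/(e^{5i\pi/6}+1) = \cos(\pi/12)/\cos(5\pi/12) = \cot(\pi/12) = 2+\sqrt{3}$, and your shortcut using $M\circ M(z) = 1/z$ does indeed yield $M(e^{-i\pi/3}) = w$ and $M(e^{-2\pi i/3}) = -w$. The superfluous-puncture step is also sound: $\pm w \in (-1,1)$ lies on the critical horizontal trajectory of $q = dz^2/((1-z^2)(1-w^2z^2))$, so the open characteristic cylinder $C$ of modulus $K'(w)/(4K(w))$ embeds in $W_6$ with core homotopic to the adjacent-pair $(2,4)$-curve $\beta'$, and monotonicity of extremal length under inclusion gives $\EL(\beta',W_6) \leq 1/\mathrm{mod}(C) = 4K(w)/K'(w)$. (In fact equality holds, but you only need the inequality.) For what it's worth, the curve you exhibit and the one the paper exhibits are the \emph{same} curve in $\calA_1$ --- both surround the adjacent pair $\{e^{i\pi/3}, e^{2\pi i/3}\}$ --- so the two proofs are not merely symmetry-equivalent but literally estimating the same extremal length.
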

\begin{proof}
We begin by applying the Cayley transform $z\mapsto i(z-i)/(z+i)$ to send the unit circle to the real line. The image of $\calA_1$ is $\CHAT \setminus \left\{\pm (2-\sqrt{3}), \pm  1 ,\pm (2+\sqrt{3})\right\}$. Then scale by $(2+\sqrt{3})$ to obtain $Z=\CHAT \setminus \left\{\pm 1, \pm  (2+\sqrt{3}) ,\pm (2+\sqrt{3})^2 \right\}$. To compute the extremal length of the curve $\alpha$ surrounding $[-1,1]$, we apply the squaring map $f(z)=z^2$ and puncture at its critical values to obtain $\CHAT \setminus \{ 0, 1 , (2+\sqrt{3})^2, (2+\sqrt{3})^4,\infty \}$. Then $f(\alpha)=\beta^2$ for the simple closed curve $\beta$ surrounding $[0,1]$. Furthermore, the extremal length of $\beta$ is the same in \[\CHAT \setminus \{ 0, 1 , (2+\sqrt{3})^2, (2+\sqrt{3})^4, \infty \}\] as in $W=\CHAT \setminus \{ 0, 1 , (2+\sqrt{3})^2, \infty \}$ because $(2+\sqrt{3})^4$ lies on one of the critical horizontal trajectories of the extremal differential on $W$.

By the proof of \thmref{thm:landen}, there is a M\"obius transformation sending $0$, $1$, $(2+\sqrt{3})^2$, and $\infty$ to $-1$, $1$, $1/k$, and $-1/k$ for the unique $k\in (0,1)$ such that $1/(k^*)^2 =(2+\sqrt{3})^2$. This gives $k^* = 1/(2+\sqrt{3}) = 2-\sqrt{3}$. Then
\[
\EL(\alpha,Z)= \EL(f^{-1}(\beta),Z)= 2\EL(\beta,W\setminus\{(2+\sqrt{3})^4\}) = 2\EL(\beta,W)= \frac{8K(k)}{K'(k)} = \frac{4K(k^*)}{K'(k^*)}.
\]
This last ratio belongs to the interval $\left[2.34031875460627 \pm 5.71\cdot 10^{-15} \right]$.
\end{proof}

In particular, $\ELsys(\calA_1)<\ELsys(\calO)$. With similar techniques as in \secref{sec:estimates}, it is possible to show that the shortest curves in $\ELsys(\calA_1)$ are the six ``edge curves'' whose extremal length was computed in the above proposition. As $r$ increases, we believe that their extremal length increases until at some point three face curves become shorter. Then $\ELsys(\calA_r)$ keeps increasing until $r$ reaches $2+\sqrt{3}$ where the fourth face curve has the same extremal length of the others.  After that point, the central face curve becomes shortest and its extremal length decreases to zero as $r \to \infty$. That is, we conjecture that $r \mapsto \ELsys(\calA_r)$ attains a unique local maximum at $r=2+\sqrt{3}$.

\subsection{Prisms}

The next interesting family of six-times-punctured spheres are the right tri\-an\-gular prisms with equilateral base punctured at their vertices. Every such prism is conformally equivalent to
\[
\calP_r := \CHAT \setminus \{ 1, e^{2\pi i / 3}, e^{-2\pi i / 3}, r , re^{2\pi i / 3}, r e^{-2\pi i / 3}  \}.
\]
for some $r>1$.

We start with a rigorous upper bound for the extremal length systole of $\calP_r$.

\begin{prop} \label{prop:upper}
The inequality $\ELsys(\calP_r) \leq 2\sqrt{3} \approx 3.464102$ holds for every $r>1$.
\end{prop}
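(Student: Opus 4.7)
My plan is to exhibit, for each $r>1$, an essential simple closed curve on $\calP_r$ whose extremal length is at most $2\sqrt{3}$. The first main candidate is the $\Z/3$-symmetric waist curve $\gamma_w$ separating the inner triangle $\{1,\omega,\omega^2\}$ from the outer triangle $\{r,r\omega,r\omega^2\}$, where $\omega=e^{2\pi i/3}$. I intend to follow the same template as in \propref{prop:roottwo} and \propref{prop:zigzag}, but applied to the cubing map $q\colon z\mapsto z^3$, which realizes $\calP_r$ as a branched $\Z/3$-cover of $Y_q:=\CHAT\setminus\{0,1,r^3,\infty\}$ with critical values at $0$ and $\infty$ (as opposed to at three of the punctures). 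Combining \lemref{lem:branch}, \lemref{lem:elliptic}, and the multiplication rule \eqref{eq:multiplication} should yield
\[
\EL(\gamma_w,\calP_r) \;=\; 3\cdot \EL(\beta_w, Y_q) \;=\; 6\,K(r^{-3/2})/K'(r^{-3/2}),
\]
where $\beta_w$ denotes the simple closed curve in $Y_q$ separating $\{0,1\}$ from $\{r^3,\infty\}$. This function is continuous and strictly decreasing in $r$, and hence drops below $2\sqrt{3}$ for all $r$ larger than some threshold $r_0$, characterized by $r_0^{-3/2}=k_3$, the third singular modulus satisfying $K(k_3)/K'(k_3)=1/\sqrt{3}$.

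For $r\in(1,r_0)$, the waist curve is no longer short enough, so I instead bound the extremal length of the $(2,4)$-edge curve $\gamma_e$ surrounding the pair $\{1,r\}$, by exhibiting an embedded annulus around it of modulus at least $1/(2\sqrt{3})$. For $r$ close to $1$, a round Euclidean annulus centered at the midpoint $(1+r)/2$, with inner radius $(r-1)/2$ and outer radius the distance to the nearest foreign puncture, already does the job. For $r$ further from $1$, I plan to use the flat triangular-prism representative of the conformal class $\calP_r$: the two rectangular faces adjacent to the vertical edge from $1$ to $r$ unfold to a flat Euclidean rectangle of shape $2s\times h$ with the two punctures sitting on midpoints of opposite boundary segments, and a long thin tubular neighborhood of the center line provides an annulus around $\gamma_e$ whose modulus I can estimate in terms of the aspect ratio $h/s$ (which is itself an explicit elliptic-integral function of $r$).

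The main obstacle is covering the intermediate range of $r$ uniformly. The round-annulus estimate degrades once $r$ is significantly larger than $1$, and the bare tubular-neighborhood estimate in the flat prism does not immediately yield modulus $\geq 1/(2\sqrt{3})$ either. In this middle range I expect to need a refined annulus combining the rectangular tube with contributions from the two adjacent triangular caps, whose cone angles $4\pi/3$ at the vertex punctures are mild enough that the modulus correction is explicit. An alternative approach is to exploit the parallel multicurve identity
\[
\EL(\gamma_e+R\gamma_e+R^2\gamma_e,\calP_r) \;=\; 6\,K'(r^{-3/2})/K(r^{-3/2}),
\]
arising from the same cubing quotient applied to the Type-III curve on $Y_q$, where $R$ denotes the generator of the $\Z/3$-rotation. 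While this gives only a lower bound on the individual $\EL(\gamma_e)$, combined with the explicit Jenkins--Strebel differential on $\calP_r$ whose three cylinders realize the three edge curves, it should produce the matching upper bound that interpolates cleanly between the two regimes and meets the waist-curve bound at $r_0$.
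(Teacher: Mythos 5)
Your plan does not yet yield a proof. The waist-curve computation $\EL(\gamma_w,\calP_r)=6K(r^{-3/2})/K'(r^{-3/2})$ is correct and, as you note, gives the bound $2\sqrt{3}$ only once $r$ exceeds the threshold $r_0$ with $r_0^{-3/2}=k_3$. For $r\in(1,r_0)$ you switch to the edge curve $\gamma_e$ around $\{1,r\}$ and attempt to bound its extremal length by embedded annuli, but you explicitly acknowledge that neither the round annulus near $r=1$ nor the flat tubular neighborhood covers the intermediate range, and your fallback suggestions (refined annuli with cap contributions, or the parallel multicurve identity) are left as sketches with ``I expect'' and ``should produce'' rather than arguments. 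In particular, the multicurve identity $\EL(\gamma_e+R\gamma_e+R^2\gamma_e)=6K'(r^{-3/2})/K(r^{-3/2})$ does not by itself control the individual $\EL(\gamma_e)$, and you offer no mechanism that converts it into an upper bound. So there is a genuine gap: the claim is not established on a nonempty open interval of $r$.

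The paper closes this gap in one stroke with no case split. After cubing, the quotient pillowcase $\CHAT\setminus\{0,1,r^3,\infty\}$ carries two dual curves $\gamma$ (around $[0,1]$) and $\delta$ (around $[1,r^3]$) whose extremal lengths multiply to $4$, since they equal $2W/H$ and $2H/W$ for the pillowcase dimensions $W\times H$. Lemma~\ref{lem:branch} gives $\EL(\alpha)=3\EL(\gamma)$ for the waist curve $\alpha$, and the full $\delta$-cylinder lifts (away from the branch values $0,\infty$) to an embedded annulus around each $\beta_j$, giving $\EL(\beta_0)\leq\EL(\delta)$. Hence $\EL(\alpha)\EL(\beta_0)\leq 3\EL(\gamma)\EL(\delta)=12$, so $\min(\EL(\alpha),\EL(\beta_0))\leq 2\sqrt{3}$ for every $r$, uniformly. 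The key observation you are missing is this product identity, which turns the trade-off between the two candidate curves into an exact constraint. Your annulus-embedding idea is in fact the same as the paper's step $\EL(\beta_0)\leq\EL(\delta)$ — you are lifting a cylinder under the cubing map — but without the product identity you cannot close the argument cleanly, and the hard estimates you anticipate are unnecessary.
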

\begin{proof}
Let $\alpha$ be the circle of radius $\sqrt{r}$ in $\calP_r$ and let $\beta=\beta_0 \cup \beta_1 \cup \beta_2$ where each $\beta_j$ surrounds the two punctures on the ray at angle $2\pi j$ from the positive real axis. The cubing map $f(z)=z^3$ sends $\calP_r$ to $\CHAT \setminus \{ 0 , 1 , r^3, \infty \}$ after puncturing at the critical values. Furthermore $f(\alpha)=\gamma^3$ and $f(\beta)=3\delta$ where $\gamma$ and $\delta$ surround $[0,1]$ and $[1,r^3]$ respectively.

Let $W$ and $H$ be the width and height of the pillowcase representation of $\CHAT \setminus \{ 0 , 1 , r^3, \infty \}$ where $[0,1]$ is horizontal. Then $\EL(\gamma) = 2 W / H$ and $\EL(\delta) = 2H/W$, so that \[\EL(\gamma)\EL(\delta)=4.\]

By \lemref{lem:branch}, we have $\EL(\alpha) = 3 \EL(\gamma)$. Furthermore, we claim that $\EL(\beta_0) \leq \EL(\delta)$. This is because the cylinder $C$ of circumference $2H$ and height $W$ for $\delta$ lifts under $f$ to a cylinder homotopic to $\beta_0$ (or any $\beta_j$). By monotonicity of extremal length under inclusion, we have $\EL(\beta_0)\leq \EL(C)= \EL(\delta)$.

We thus have
\[
\EL(\alpha)\EL(\beta_0) \leq 3\EL(\gamma)\EL(\delta) = 12,
\]
from which it follows that 
\[
\ELsys(\calP_r)\leq \min(\EL(\alpha),\EL(\beta_0)) \leq \sqrt{12} = 2 \sqrt{3}. \qedhere
\]
\end{proof}

The upper bound from \propref{prop:upper} can be improved to
\begin{equation}
\ELsys(\calP_r) \lesssim 2.6236 < 2.799 < \ELsys(\calO)
\end{equation}
for all $r>0$ using numerical calculations, as we now explain.

Let $x = \EL(\alpha,\calP_r)$. One can show that $x = 6K(r^{-3/2})/K'(r^{-3/2})$, so that the map $r\mapsto x$ is a strictly decreasing diffeomorphism from $(0,\infty)$ to itself. To obtain this formula, the idea is to apply the cubing map $f(z)=z^3$ and to puncture at $0$ and $\infty$. Then $f$ maps $\alpha$ to $\gamma^3$ where $\gamma$ is a simple closed curve surrounding the interval $[0,1]$ in $\CHAT \setminus \{ 0, 1, r^3 , \infty \}$.  By \lemref{lem:elliptic}, the extremal metric for $\gamma$ is a rectangular pillowcase. Therefore, the extremal metric $\rho$ for $\alpha$ is the triple branched cover of this pillowcase branched over two punctures that lie above each other. This means that $\rho$ looks like the 2-sided surface of the cartesian product between a tripod and an interval (see \figref{fig:tripod}). If we scale the metric so that the height is equal to $1$, then each leg of the tripod has length $x/6$, so that the total circumference is $x$. The cylinder $C$ evoked in the proof of \propref{prop:upper} simply goes around one of the pages of this open book in the vertical direction. To get a better estimate for $\EL(\beta_0)$, it suffices to find a larger embedded annulus in the same homotopy class. We construct one using the flat metric $\rho$.

\begin{figure}[htp]
     \centering
     \includegraphics[scale=1]{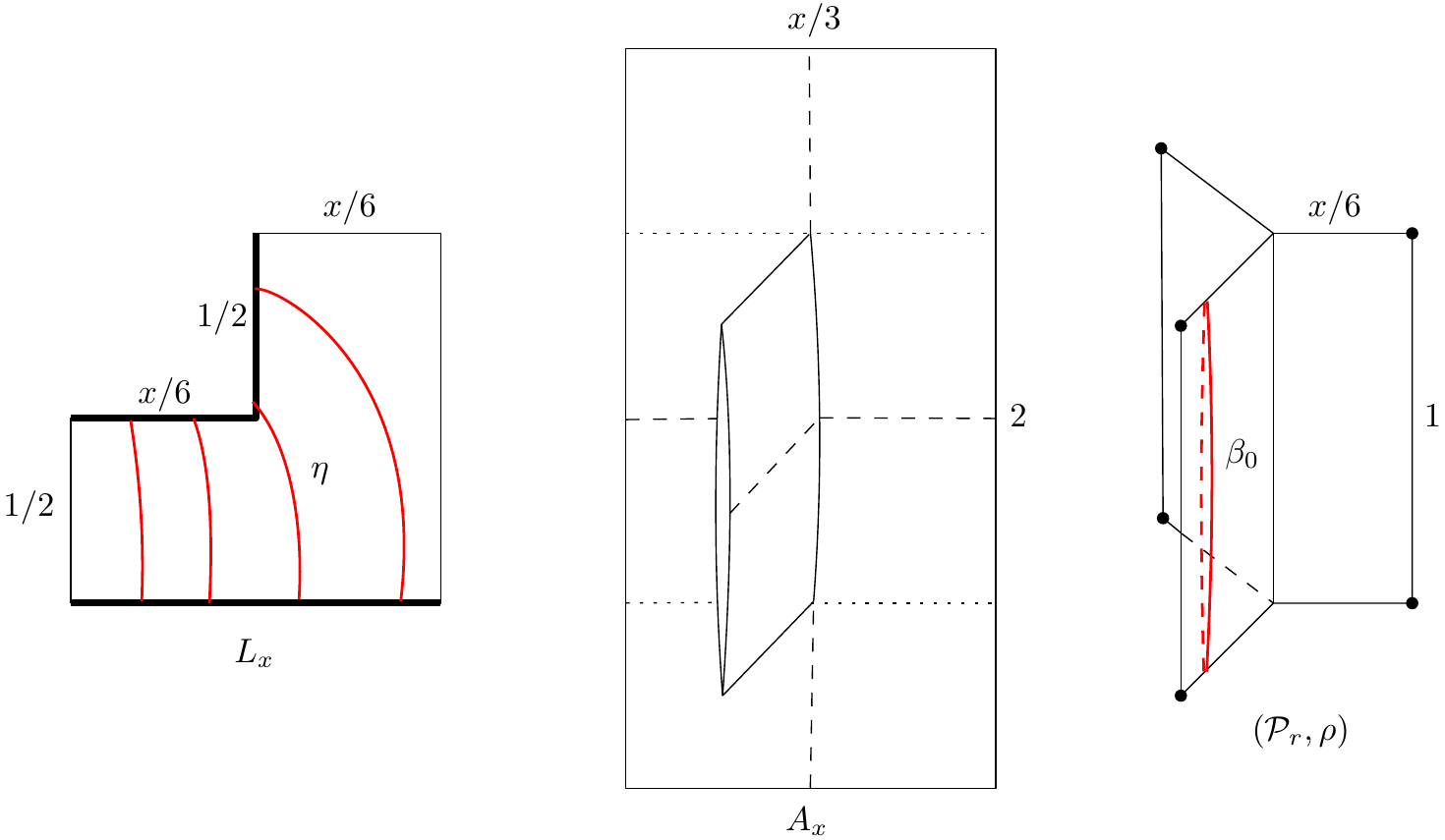}
    \caption{The polygon $L_x$, the annulus $A_x$, and the flat metric $\rho$ on $\calP_r$. The embedding $A_x^\circ \hookrightarrow \calP_r$ is obtained by folding the flaps at the top and bottom of the open annulus $A_x^\circ$ along the dotted lines.}
     \label{fig:tripod}
\end{figure}

Consider the polygon $L_x \subset \C$ with vertices at $0$, $x/3$, $x/3+i$, $x/6+i$, $x/6+i/2$, and $i/2$, as depicted in \figref{fig:tripod}. Reflect $L_x$ across the real axis, then double the resulting $T$-shape across the two pairs of sides that form an interior angle of $3\pi/2$. The resulting object $A_x$ is a topological annulus. Geometrically, it is equal to the cylinder $C$ from before glued onto an $x/3$ by $2$ rectangle via a vertical slit in the center. The interior $A_x^\circ$ embeds isometrically into $\calP_r$ equipped with the metric $\rho$ (see \figref{fig:tripod}), with its core curve mapping to $\beta_0$. By monotonicity of extremal length under inclusion, we have $\EL(\beta_0, \calP_r) \leq \EL(\beta_0,A_x^\circ)$. Furthermore, a symmetry argument implies that $\EL(\beta_0,A_x^\circ) = 4 \EL(\eta, L_x)$ where $\eta$ is the set of all arcs joining the bottom side of $L_x$ to the pair of sides forming an interior angle of $3\pi/2$. We thus obtain
\[
\ELsys(\calP_r) \leq \min(\EL(\alpha,\calP_r), \EL(\beta_0,\calP_r)) \leq \min(x, 4\EL(\eta, L_x)).
\]

\begin{figure}[htp]
    \centering
    \includegraphics[scale=1]{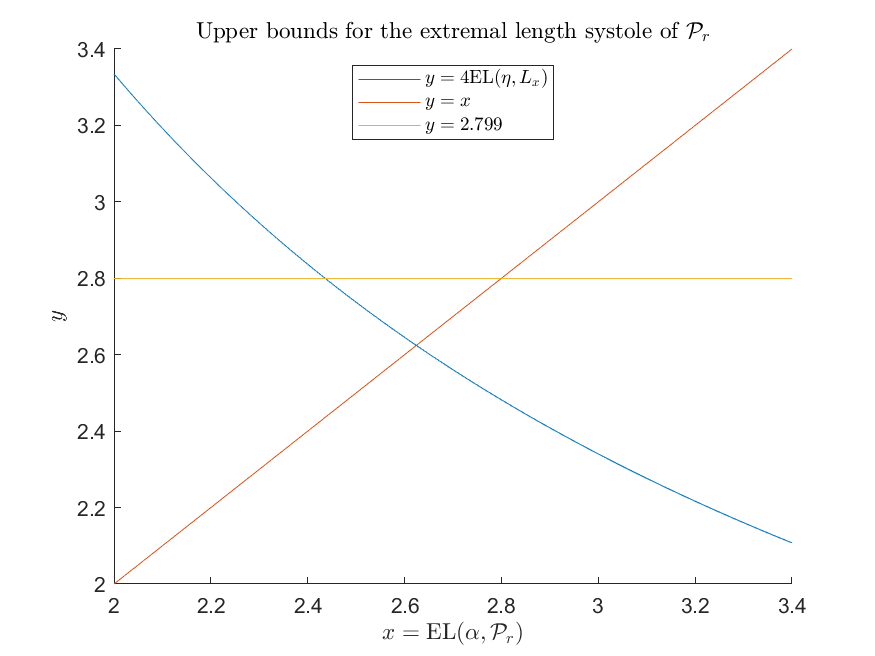}
    \caption{Numerical estimates for $\ELsys(\calP_r)$}
    \label{fig:plot}
\end{figure}

This last extremal length $\EL(\eta, L_x)$ can be calculated by finding a conformal map from $L_x$ onto a rectangle $R_x$, with the ``sides'' of $L_x$ joined by $\eta$ mapping to the vertical sides of $R_x$. We carried out this computation for one thousand equally spaced values of $x$ in the interval $[2,3.4]$ using the Schwarz--Christoffel Toolbox \cite{Driscoll} for MATLAB \cite{Matlab}. The resulting bounds for $\ELsys(\calP_r)$ are shown in \figref{fig:plot}. Note that $\EL(\eta, L_x)$ is strictly decreasing in $x$, so the maximum of the upper bound $\min(x,4\EL(\eta, L_x))$ is achieved where $x = 4\EL(\eta, L_x)$, which occurs around $x\approx2.6236$ according to our numerical calculations.

Although the Schwarz--Christoffel Toolbox does not come with certified error bounds, it is quite reliable especially for the range of polygons we consider, where crowding of vertices does not occur. One could turn this into a rigorous upper bound using similar methods as in \cite[Section 6]{nonconvex}.




\bibliographystyle{hamsalpha}
\bibliography{main}

\end{document}